\newcommand{\newcom}{\newcommand}
\newcom{\al}{\alpha}
\newcom{\be}{\beta}
\newcom{\eps}{\epsilon}
\newcom{\ve}{\varepsilon}
\newcom{\e}{\varepsilon}
\newcom{\ga}{\gamma}
\newcom{\Ga}{\Gamma}
\newcom{\ka}{\kappa}
\newcom{\Lam}{\Lambda}
\newcom{\lam}{\lambda}
\newcom{\Om}{\Omega}
\newcom{\om}{\omega}
\newcom{\Si}{\Sigma}
\newcom{\si}{\sigma}
\newcom{\tht}{\theta}
\newcom{\dtri}{\nabla}
\newcom{\tri}{\triangle}
\newcom{\oo}{\infty}
\newcom{\vphi}{\varphi}
\newcom{\CA}{{\mathcal A}}
\newcom{\cB}{{\mathcal B}}
\newcom{\cC}{{\mathcal C}}
\newcom{\cD}{{\mathcal D}}
\newcom{\cF}{{\mathcal F}}
\newcom{\CG}{{\mathcal G}}
\newcom{\CH}{{\mathcal H}}
\newcom{\cL}{{\mathcal L}}
\newcom{\cM}{{\mathcal M}}
\newcom{\cN}{{\mathcal N}}
\newcom{\CP}{{\mathcal P}}
\newcom{\CR}{{\mathcal R}}
\newcom{\cS}{{\mathcal S}}
\newcom{\cQ}{{\mathcal Q}}
\newcom{\cT}{{\mathcal T}}
\newcom{\CU}{{\mathcal U}}
\newcom{\cY}{{\mathcal Y}}
\newcom{\cZ}{{\mathcal Z}}
\newcom{\T}{\mathbb T}
\newcom{\BT}{{\mathbb{T}^2}}
\newcom{\Z}{\mathbb Z}
\newcom{\C}{\mathbb C}
\newcom{\E}{\mathbb E}
\newcom{\dd}{\mathrm d}
\newcom{\x}{\mathbf x}
\newcommand{\vc}[1]{{\bf #1}}
\newcom{\vN}{\vc{N}}
\newcom{\vn}{\vc{n}}
\newcom{\vG}{\vc{G}}
\newcom{\vF}{\vc{F}}
\newcom{\vf}{\vc{f}}
\newcom{\vg}{\vc{g}}
\newcom{\vq}{\vc{q}}
\newcom{\vu}{\vc{u}}
\newcom{\vw}{\vc{w}}
\newcom{\vb}{\vc{b}}
\newcom{\vh}{\vc{h}}
\newcom{\vz}{\vc{z}}
\newcom{\vup}{\vu^{+}}
\newcom{\vum}{\vu^{-}}
\newcom{\vvp}{\vv^{+}}
\newcom{\vvm}{\vv^{-}}
\newcom{\vbp}{\vb^{+}}
\newcom{\vbm}{\vb^{-}}
\newcom{\vhp}{\vh^{+}}
\newcom{\vhm}{\vh^{-}}
\newcom{\Omp}{{\Om^+}}
\newcom{\Omm}{{\Om^-}}
\newcom{\vupm}{{\vu^{\pm}}}
\newcom{\vvpm}{{\vv^{\pm}}}
\newcom{\vbpm}{{\vb^{\pm}}}
\newcom{\vhpm}{{\vh^{\pm}}}
\newcom{\vwp}{{\vc{w}^+}}
\newcom{\vwm}{{\vc{w}^-}}
\newcom{\vwpm}{{\vc{w}^{\pm}}}
\newcom{\Ompm}{{\Omega^{\pm}}}
\newcom{\vom}{\boldsymbol{\omega}}
\newcom{\vvap}{\boldsymbol{\varpi}}
\newcom{\vop}{\vom^{+}}
\newcom{\vnu}{\boldsymbol{\nu}}
\newcom{\vopm}{\vom^{\pm}}
\newcom{\vjp}{\vj^+}
\newcom{\vjm}{\vj^-}
\newcom{\vjpm}{\vj^{\pm}}
\newcom{\vj}{\boldsymbol{\xi}}
\newcom{\Ds} {\langle\nabla\rangle^{s-\f12}}
\newcom{\Qvec}{\mathbf{Q}}
\newcom{\xx}{\mathbf{x}}
\newcom{\ee}{\mathbf{e}}
\newcom{\yy}{\mathbf{y}}
\newcom{\ii}{\mathbf{i}}
\newcom{\jj}{\mathbf{j}}
\newcom{\kk}{\mathbf{k}}
\newcom{\nn}{\mathbf{n}}
\newcom{\mm}{\mathbf{m}}
\newcom{\pp}{\mathbf{p}}
\newcom{\hh}{\mathbf{h}}
\newcom{\uu}{\mathbf{u}}
\newcom{\vv}{\mathbf{v}}
\newcom{\ww}{\mathbf{w}}
\newcom{\DD}{\mathbf{D}}
\newcom{\FF}{\mathbf{F}}
\newcom{\BB}{\mathbf{B}}
\renewcommand{\AA}{\mathbf{A}}
\newcom{\MM}{\mathbf{M}}
\newcom{\NN}{\mathbf{N}}
\newcom{\II}{\mathbf{I}}
\newcom{\PP}{\mathbf{P}}
\newcom{\QQ}{\mathbf{Q}}
\newcom{\WW}{\mathbf{W}}
\newcom{\Ue}{\mathcal{U}_\ve}
\newcom{\Be}{\mathcal{B}_\ve}
\newcom{\mue}{\mathcal{\mu}_\ve}
\newcom{\sqe}{\sqrt{\ve}}
\newcom{\BR}{{\mathbb{R}^3}}
\newcom{\BS}{{\mathbb{S}^2}}
\newcom{\BOm}{\mathbf{\Omega}}
\newcom{\tr}{\mathrm{tr}}
\newcommand{\ud}{{\rm d}}
\newcom{\ds}{{\rm d} s}
\newcom{\f}{\frac}
\newcom{\di}{\displaystyle\int}
\newcom{\dl}{\displaystyle\lim}
\newcom{\ov}{\overline}
\newcom{\sset}{\subset}
\newcom{\wt}{\widetilde}
\newcom{\pa}{\partial}
\newcom\na{\nabla}
\newcom{\suml}{\sum\limits}
\newcom{\supl}{\sup\limits}
\newcom{\intl}{\int\limits}
\newcom{\infl}{\inf\limits}
\newcom{\disp}{\displaystyle}
\newcom{\non}{\nonumber}
\newcom{\no}{\noindent}
\newcom{\QED}{$\square$}
\def\div{\mathop{\rm div}\nolimits}
\def\ef{\hphantom{MM}\hfill\llap{$\square$}\goodbreak}
\def\eqdefa{\buildrel\hbox{\footnotesize def}\over =}
\newtheorem{Proposition}{Proposition}[section]
\newtheorem{Lemma}{Lemma}[section]
\newtheorem{theorem}{Theorem}[section]
\newtheorem{lemma}{Lemma}[section]
\newtheorem{corollary}{Corollary}[section]
\title[]{Modeling and Computation of Liquid Crystals}
\author[{Acta Numerica}]{%
 Wei Wang\\
Department of Mathematics, Zhejiang University, Hangzhou 310027, China\\
{\tt wangw07@zju.edu.cn}\\
\and
Lei Zhang\\
Beijing International Center for Mathematical Research,\\
Center for Quantitative Biology, Peking University, Beijing 100871, China\\
{\tt zhangl@math.pku.edu.cn}\\
\and
Pingwen Zhang\\
School of Mathematical Sciences, Peking University, Beijing 100871, China\\
{\tt pzhang@pku.edu.cn}
}
\begin{document}

\label{firstpage}
\maketitle

\begin{abstract}
\vspace{3cm}
Liquid crystal is a typical kind of soft matter that is intermediate between crystalline solids and isotropic fluids. The study of liquid crystals has made tremendous progress over the last four decades, which is of great importance on both fundamental scientific researches and widespread applications in industry. In this paper, we review the mathematical models and their connections of liquid crystals, and survey the developments of numerical methods for finding the rich configurations of liquid crystals.
\end{abstract}

\tableofcontents
\vspace{3mm}

\section{Introduction}\label{sec:1}

Liquid crystals (LCs) are classical examples of partially ordered materials that translate freely as liquid and exhibit some long-range order above a critical concentration or below a critical temperature. The anisotropic properties lead to anisotropic mechanical, optical and rheological properties \cite{de1993physics,stewart}, and make LCs suitable for a wide range of commercial applications, among which the best known one is in LC display industry \cite{kitson_geisow_apl,majumdar_pre_2007}. LCs also have substantial applications in nanoscience, biophysics, materials design, etc. Furthermore, LC is a typical system of complex fluids, hence the theoretical approaches or technical tools of LC systems can be applied in the study beyond the specific field of LCs, such as surface/interfacial phenomena, active matter, polymers, elastomers and colloid science \cite{Teramoto2010Morphological,Marcus2012Transition,Cai2017Liquid}.

LCs are mesophases between anisotropic crystalline (Fig. \ref{fig:LC} (a)) and isotropic liquid (Fig. \ref{fig:LC} (e)).
There are three major classes of LCs-- the nematic, the cholesteric, and the smectic \cite{Friedel1922Les}.
The simplest phase is the nematic phase (Fig. \ref{fig:LC} (d)), where there is a long-range orientational order, i.e., the molecules almost align parallel to each other, but no long-range correlation to the molecular center of mass positions.
On a local scale, the cholesteric (Fig. \ref{fig:LC} (c)) and nematic orders are similar, while, on a larger scale the director of cholesteric molecules follows a helix with a spatial period. The nematic liquid crystal is a special cholesteric liquid crystal with no helix.
Smectics (Fig. \ref{fig:LC} (b)) have one degree of translational ordering, resulting in a layered structure. As a consequence of this partial translational ordering, the smectic phases are much more viscous and more close to crystalline than either nematic phase or cholesteric phase.

\begin{figure}[hbt]
    \begin{center}
    \includegraphics[width=\columnwidth]{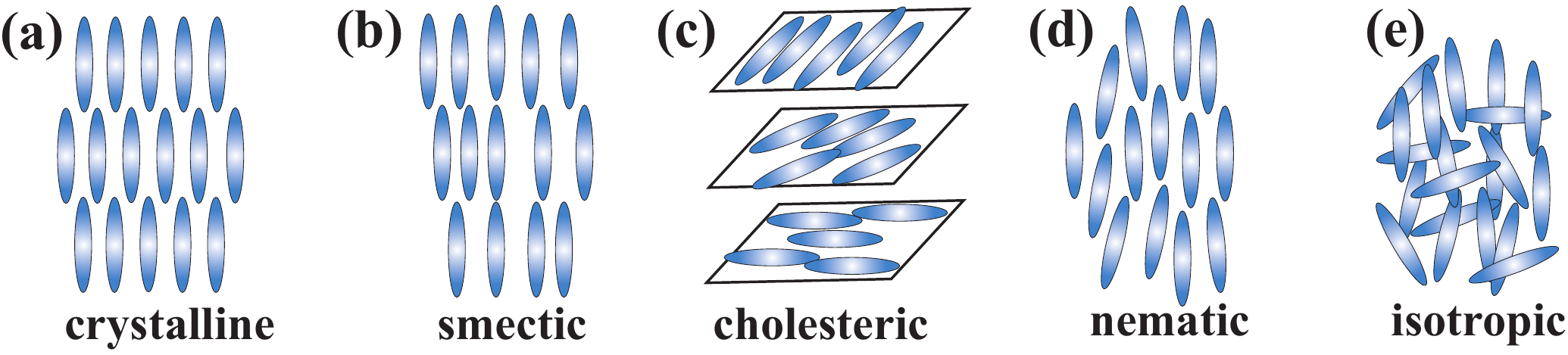}
    \caption{Schematic representation of (a) crystalline, (b) smectic, (c) cholesteric, (d) nematic, and (e) isotropic phases with rod-like molecules.}
    \label{fig:LC}
    \end{center}
\end{figure}

The most widely studied system of LCs is rod-like nematic liquid crystal (NLC), of which molecules are rod-shape and rigid.
In such a NLC, the molecules may move freely like a liquid, but its molecules in a local area may tend to align along a certain direction,  which makes the liquid being anisotropic.
In order to describe the anisotropic behaviour of NLC,
one has to choose appropriate functions, called {\it order parameters} in the community of physics. There are various ways for choosing order parameters, which lead to mathematical theories at different levels, ranging from microscopic molecular theories to macroscopic continuum theories.

The first type of model is the vector model, including the Oseen-Frank theory \cite{oseen1933theory,frank1958liquid} and the Ericksen's theorem \cite{Ericksen1990}. In these models, it is assumed that there exists a locally preferred direction
 $\nn(\xx)\in\mathbb{S}^2$ (the unit sphere in the three dimensional space)
 for the alignment of LC molecules at each material point $\xx$.
This setting is rough but works very well in many situations, so the vector theory has been widely used in LC community for its simplicity. However, {vector theories have that drawback that it does not respect the head-to-tail symmetry of rod-like molecular, in which $-\nn$ should be equivalent to $\nn$ \cite{ball2008orientable}.  This drawback may lead to a incorrect description of some systems, especially when defects are present.}

The second one is the molecular model, which was proposed by  \citeasnoun{onsager1949effects} to characterize
the nematic-isotropic phase transition and then developed by  \citeasnoun{doi1981molecular} to study the LC flow.
In this theory, the alignment behavior is described by an orientational distribution function $f(\xx,\mm)$ which represents the number density of molecules with orientation $\mm \in \mathbb{S}^2$ at a material point $\xx$. Since the distribution function $f$ contains much more information on the molecular alignment, the molecular models can provide more
accurate description. However, the computational cost is usually very expensive as it often involves solving high dimensional problems.

The third type of model is the $\QQ$-tensor model, including the Landau-de Gennes (LdG) theory \cite{de1993physics}, which uses
a traceless symmetric $3\times3$ matrix $\QQ(\xx)$ to describe the alignment of LC molecules at the position $\xx$. In a physical viewpoint, the order parameter $\QQ$-tensor, is related to the second moment of the orientational distribution function $f(\xx,\mm)$.
It does not assume that the molecular alignment has a preferred direction and thus can describe the biaxiality.

The vector theory and tensor theory are called macroscopic theories, which are based on continuum mechanics, while the molecular theory is a microscopic one that is derived from the viewpoint of statistical mechanics.
Although they were proposed from different physical viewpoints, all of them play important roles and have been widely used in studies of LCs. Understanding these models and their relationships becomes an important issue for LC studies. Moreover, the coefficients in macroscopic theories are phenomenologically determined and their interpretations in terms of basic physical measurements remain unclear. By exploring their relationships, one can determine these coefficients in terms of the molecular parameters, which provide a clear physical interpretation rather than phenomenological determination.
From the aspect of mathematical modeling, many efforts have been addressed on their relationships especially on microscopic foundations of macroscopic theories.
However, little work has been done from the analytical side until some progresses have been made during the past ten years.
New experimental works and theoretical paradigms call for major modeling and analysis efforts.

A particularly intriguing feature of LCs is the topologically induced defects \cite{kleman1989defects,ball2017liquid}. Defects are discontinuities in the alignment of LCs. They are classified as point defects, disclination lines, and surface defects, and further classified by the topological degree of defects, including  two-dimensional (2D) $\pm1$ and $\pm1/2$ point defects (Fig. \ref{fig:defect}). Defects are energetically unfavorable because the existence of defects will increase the elastic energy of the nearby LCs. However, defects are unavoidable due to the environment such as external fields (electric field and magnetic field) \cite{oh1995electro}, geometric constraints (boundary condition and domain) \cite{de2007point}, unsmooth boundary (polygon corner) \cite{han2020pol}, etc.
NLC is a typical system to study defects, rich static structures and dynamic processes relevant to defects. Defects have the property of being isotropic and are surrounded by NLC molecules, and hence are able to induce the phase transition between NLC phases and isotropic \cite{mottram2000defect,mottram1997disclination}. The multiplicity of defect patterns also guide the design of new multi-stable LC display device \cite{willman2008switching}.

\begin{figure}
    \begin{center}
    \includegraphics[width=\columnwidth]{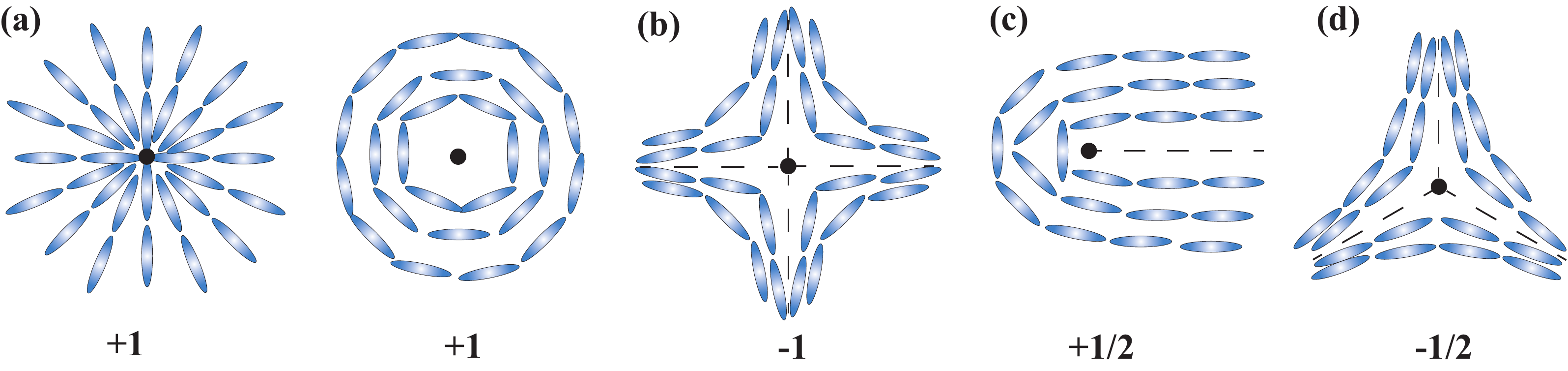}
    \caption{(a-d) are schematic diagrams of $\pm1$ and $\pm1/2$ 2D point defects, respectively.}
    \label{fig:defect}
    \end{center}
\end{figure}

A topologically confined NLC system can admit multiple stable equilibria, which usually correspond to different defect patterns~\cite{kralj1999biaxial,wang2017topological,robinson2017molecular}. The energy landscape of the NLC system, upon which the equilibrium states are located, is determined by the properties of the LC material as well as the environment, such as temperature, size and shape of confining space, external field, etc. Tremendous experimental and theoretical studies have been made to investigate the defect patterns in NLCs \cite{muvsevivc2006two,lubensky1998topological,onsager1949effects,bajc2016mesh}.

From a numerical perspective, there are two approaches to compute stable defect patterns. One is the energy-minimization based approach \cite{cohen1987minimum,alouges1997new,adler2015energy,nochetto2017finite,majumdar2018remarks,gartland1991numerical}, which is often numerically solved by Newton-type or quasi-Newton method.
The other approach is to follow the gradient flow dynamics driven by the free energy corresponding to individual model of LCs \cite{fukuda2004interaction,ravnik2009landau,canevari2017order,wang2019order,macdonald2020moving}. Various efficient numerical methods have been developed to solve the gradient flow equations, including energy stable numerical schemes such as convex splitting method \cite{Elliott1993global}, invariant energy quadratization method \cite{Yang2016linear}, scalar auxiliary variable method \cite{Shen2018sav}, etc.
Furthermore, machine learning recently becomes an emerging approach in the field of soft matter including LCs \cite{walters2019machine}.

There are also extensive numerical developments for the LC hydrodynamics to simulate LC flows, LC droplets,  colloid LC composites, etc \cite{badia2011overview,foffano2014dynamics}.
Various numerical studies of the NLC dynamics are performed by applying the Ericksen--Leslie equations \cite{liu2000approximation,becker2008finite}, the hydrodynamic $\mathbf{Q}$-tensor models \cite{BerisEdwards,zhao2016semi}, and the molecular models based on the extended Doi kinetic theory  \cite{doi1988theory,ji2008kinetic}.

With the existence of multiple stable or metastable states in the LC systems, it may transit from one stable equilibrium to another under thermal fluctuation or external perturbation, causing the position and topology of defect to change drastically~\cite{kusumaatmaja2015free,gupta2009size}.
Thus it is important, both experimentally and theoretically, to determine when and how such phase transition occurs.
In the zero-temperature limit, the phase transition connecting two stable defects follows the so called minimal energy path (MEP), which has the lowest energy barrier among all possible paths. The transition state corresponds to the state with the highest energy along the MEP, i.e., index-1 saddle point~\cite{zhang2016optimization}.
Finding accurate critical nuclei and transition pathways is a challenging problem due to the anisotropic nature of the problems and the existence of a number of length scales.
There are two typical approaches to compute transition states and transition pathways. One is the surface-walking method, such as the gentlest ascent dynamics \cite{weinan2011gentlest} and the dimer type method \cite{henkelman1999dimer}, the other is path-finding method, such as the string method \cite{weinan2002string} and the nudged elastic band method \cite{jonsson1998nudged}.

Besides local minimizers and transition states, there is substantial recent interest in high-index saddle points with multiple unstable directions on the LC energy landscape, which are stationary solutions of the Euler-Lagrange equation corresponding to the LC free energy, and the Morse index is the number of negative eigenvalues of the corresponding Hessian of the free energy \cite{milnor1969morse}.
In recent years, a number of numerical algorithms have been developed to find multiple solutions of nonlinear equations, including the minimax method \cite{li2001minimax}, the deflation technique \cite{farrell2015deflation}, the eigenvector-following method \cite{doye2002saddle}, and the homotopy method \cite{mehta2011finding}.
Despite substantial progress in this direction, the relationships between different solutions are unclear.
In a recent work \cite{yin2019high}, the high-index saddle dynamics was proposed to efficiently compute any-index saddle points. By applying the high-index optimization-based shrinking dimer method, a solution landscape, which is a pathway map of all connected solutions, can be constructed for the NLCs confined on a square domain \cite{yin2020construction}.

The rest of the paper is organized as follows. The mathematical models of LC, including molecular models, vector models, and tensor models, will be introduced in Section 2. Mathematical analysis and connections between different LC models will be discussed in Section 3. For the numerical computation of LCs, we will review numerical methods for computing stable defects of LC in Section 4 and LC hydrodynamics in Section 5. In Section 6, we will introduce the numerical algorithms to compute the transition pathways between difference LCs and the solution landscapes of LC systems. Section 7 will conclude with an outlook for trends and future developments of LCs.

\section{Mathematical models of liquid crystals}\label{sec:2}
In this section, we review the three typical theories of LCs, i. e., molecular models, vector models, and tensor models separately.

\subsection{Molecular model}
\subsubsection{The static Onsager theory}
\citeasnoun{onsager1949effects} proposed a classical model which can predict the isotropic-nematic phase transition for rod-like LCs.
The theory is based on a orientational distribution function $f(\mm)$ which represents the number density of molecules with orientation $\mm$.
The free energy can be written as
\begin{equation}\label{eq:free energy-homo}
A[f]=\int_{\BS}\Big\{f(\mm)\ln{f(\mm)}+\frac12f(\mm)U(\mm)\Big\}\ud\mm.
\end{equation}
The first term comes from the Brownian motion of the rod-like molecules, and the second term is the interaction term where $U(\mm)$ is the mean-field interaction potential
\begin{align}\label{potential}
 U(\mm)=(U[f])(\mm):=\int_{\BS}B(\mm, \mm')f(\mm')d\mm',
\end{align}
where $B(\mm,\mm')$ is the interaction potential between two molecules with orientation $\mm$ and $\mm'$.
Onsager introduced the potential $B(\mm,\mm')$ with the form
\begin{align}\label{potential-Ons}
B(\mm, \mm')=\alpha~|\mm\times\mm'|,
\end{align}
which is calculated based on the excluded volume potential.
 \citeasnoun{maier1958einfache} proposed a similar interaction potential, now known as the
Maier-Saupe potential:
\begin{align}\label{potential-MS}
B(\mm, \mm')=\alpha~|\mm\times\mm'|^2.
\end{align}
The parameter $\alpha$ represents the density (for lyotropic LCs)
or the inverse of absolute temperature (for thermotropic LCs).

The energy functional (\ref{eq:free energy-homo})-(\ref{potential}) with (\ref{potential-Ons})
 or (\ref{potential-MS}) are called the Onsager energy or the Maier-Saupe energy respectively.
 Both of them characterize the competition between the entropy and interaction energy and
 can effectively describe the nematic-isotropic phase transition.
 If the temperature is high or the density is dilute, the entropy
 term dominates the energy and the minimizer is the constant distribution $f(\mm)=1/4\pi$, which describes the isotropic phase.
On the contrary, if the temperature is low or the density is large, the  energy is dominated by the interaction term
and will be minimized by an axially symmetric distribution $f(\mm)=f_0(\mm\cdot\nn)$. This case corresponds to the nematic phase in which
molecules prefer a uniform alignment.
{The rigorous proof for the Maier-Saupe energy  is given independently by \citeasnoun{LiuZhangZhang2005CMS} and \citeasnoun{fatkullin2005critical}.
Another  proof is given in \cite{Zhou2005}.
Precisely, in these papers, the following theorem was proved:}
\begin{theorem}\label{thm:critical point}
All the critical points of the Maier-Saupe energy ((\ref{eq:free energy-homo})-(\ref{potential}) with (\ref{potential-MS})) are given by
$$h_{\eta,\nn}(\mm)=\frac{\mathrm{e}^{\eta(\mm\cdot\nn)^2}}{\int_\BS\mathrm{e}^{\eta(\mm\cdot\nn)^2}\ud\mm}, $$
where $\nn$ is an arbitrary unit vector, and $\eta$ equals to $0$ or satisfies
\begin{eqnarray}\label{eta-alpha}
\frac{\int_0^1\mathrm{e}^{\eta z^2}\ud z}{\int_0^1z^2(1-z^2)\mathrm{e}^{\eta{z^2}}\ud{z}}=\alpha.
\end{eqnarray}
Furthermore, there exists
$$\alpha^*=\min_{\eta\in\mathbb{R}}\frac{\int_0^1\mathrm{e}^{\eta z^2}\ud z}{\int_0^1z^2(1-z^2)\mathrm{e}^{\eta{z^2}}\ud{z}}\approx6.731393,$$
 such that
\begin{enumerate}

\item For $\alpha<\alpha^*$, (\ref{eta-alpha}) has no solution. While for $\alpha=\alpha^*$, it has a unique solution
$\eta=\eta^*$.

\item For $\alpha>\alpha^*$, (\ref{eta-alpha}) has exactly two solutions $\eta=\eta_1(\alpha),\eta_2(\alpha)$
satisfying
\begin{itemize}
\item $\eta_1(\alpha)>\eta^*>\eta_2(\alpha)$, $\lim_{\alpha\to\alpha^*}\eta_1(\alpha)=\lim_{\alpha\to\alpha^*}\eta_2(\alpha)=\eta^*$;
\item $\eta_1(\alpha)$ is an increasing function of $\alpha$, while $\eta_2(\alpha)$ is a decreasing function;
\item $\eta_2(7.5)=0$.
\end{itemize}
\end{enumerate}
\end{theorem}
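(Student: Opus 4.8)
The plan is to follow the classical route in four stages. First, regard $A$ as a functional on probability densities on $\BS$: imposing $\int_{\BS} f\,\ud\mm = 1$ with a Lagrange multiplier and using the symmetry of $B$, the first variation yields the Euler--Lagrange equation $\ln f(\mm) + 1 + U[f](\mm) = \mathrm{const}$. Since $B(\mm,\mm') = \alpha\bigl(1 - (\mm\cdot\mm')^2\bigr)$, one has $U[f](\mm) = \alpha - \alpha\,\mm^{\top}\QQ[f]\,\mm$ with $\QQ[f] := \int_{\BS}\mm'\otimes\mm'\,f(\mm')\,\ud\mm'$ and $\tr\QQ[f] = 1$; hence $f \propto \exp\bigl(\alpha\,\mm^{\top}\QQ[f]\,\mm\bigr)$, i.e. every critical point is a Bingham distribution whose exponent matrix equals $\alpha$ times its own second-moment tensor. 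Diagonalizing, $f \propto \exp\bigl(\alpha(q_1 m_1^2 + q_2 m_2^2 + q_3 m_3^2)\bigr)$ with $q_i = \langle m_i^2\rangle_f$ and $q_1 + q_2 + q_3 = 1$, and the substance of the theorem is that two of the $q_i$ must coincide.

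This axisymmetry claim is the crux. Order $q_1 \ge q_2 \ge q_3$; grouping the first two eigendirections and writing $u := q_1 - q_2 \ge 0$, the exponent is (modulo the constraint $m_1^2 + m_2^2 + m_3^2 \equiv 1$) an expression in which $u$ enters only through $\tfrac{\alpha u}{2}(m_1^2 - m_2^2)$, so the log-partition function $\ln Z$ is even under $u \mapsto -u$ (swap $m_1 \leftrightarrow m_2$) and strictly convex; thus $\partial_u\ln Z$ is odd and strictly increasing in $u$. The self-consistency relation for $q_1 - q_2$ reads $\partial_u\ln Z = \tfrac{\alpha}{2}u$, which at $u = 0$ holds automatically; the point is that it should have no solution with $u > 0$ unless $q_2 = q_3$ (the axisymmetric-about-$\ee_1$ branch). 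Because $\partial_u\ln Z$ also depends on the remaining parameter, one must couple this relation with the second self-consistency equation, and making the resulting obstruction quantitative — ruling out a genuinely biaxial critical point $q_1 > q_2 > q_3$ — requires an explicit spherical integral inequality. I expect this to be the main obstacle; it is where essentially all of the work in the cited references lies.

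Granting axisymmetry, $f = h_{\eta,\nn}$ for some unit vector $\nn$ and some $\eta \in \mathbb{R}$. Taking $\nn = \ee_3$ and writing $s := \langle(\mm\cdot\nn)^2\rangle_f$ (whence $\langle m_1^2\rangle_f = \langle m_2^2\rangle_f = \tfrac{1-s}{2}$ by $\tr\QQ[f] = 1$), the identity $\mm^{\top}\QQ[f]\,\mm = \tfrac{1-s}{2} + \tfrac{3s-1}{2}(\mm\cdot\nn)^2$ forces $\eta = \tfrac{\alpha}{2}(3s - 1)$, while changing to the variable $z := \mm\cdot\nn$ in spherical coordinates gives $s = \int_0^1 z^2 \mathrm{e}^{\eta z^2}\,\ud z \big/ \int_0^1 \mathrm{e}^{\eta z^2}\,\ud z$. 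For $\eta \ne 0$, dividing $\eta = \tfrac{\alpha}{2}(3s-1)$ through by $\eta$ and inserting the integration-by-parts identity $\int_0^1 (3z^2 - 1)\mathrm{e}^{\eta z^2}\,\ud z = 2\eta\int_0^1 z^2(1-z^2)\mathrm{e}^{\eta z^2}\,\ud z$ — itself the vanishing of $\int_0^1 \tfrac{\ud}{\ud z}\bigl[z(1-z^2)\mathrm{e}^{\eta z^2}\bigr]\,\ud z$ — turns the relation into exactly (\ref{eta-alpha}). The case $\eta = 0$ gives $s = 1/3$, the isotropic density $f \equiv 1/4\pi$. Each of these computations is reversible, so conversely every $h_{\eta,\nn}$ with $\eta = 0$ or $\eta$ solving (\ref{eta-alpha}) is a critical point, and the list is exhaustive.

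It remains to analyse $\phi(\eta) := \int_0^1 \mathrm{e}^{\eta z^2}\,\ud z \big/ \int_0^1 z^2(1-z^2)\mathrm{e}^{\eta z^2}\,\ud z$. It is smooth and positive, with $\phi(0) = 1/(\tfrac13 - \tfrac15) = 7.5$; as $\eta \to +\infty$ the tilted measure $\mathrm{e}^{\eta z^2}\,\ud z$ concentrates at $z = 1$ and as $\eta \to -\infty$ at $z = 0$, in both cases the denominator decaying faster than the numerator, so $\phi(\eta) \to +\infty$ at both ends. A direct computation gives $\phi'(0) < 0$, and in general the sign of $\phi'(\eta)$ equals that of $-\iint_{[0,1]^2}(z^2 - w^2)^2(1 - z^2 - w^2)\mathrm{e}^{\eta(z^2 + w^2)}\,\ud z\,\ud w$; one shows this quantity changes sign exactly once, at some $\eta^* > 0$ — a second, milder technical point, again a correlation-type inequality. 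Hence $\phi$ is strictly decreasing on $(-\infty, \eta^*)$ and strictly increasing on $(\eta^*, +\infty)$, with global minimum $\alpha^* := \phi(\eta^*) \approx 6.731393$; therefore $\phi(\eta) = \alpha$ has no, one, or two solutions according as $\alpha < \alpha^*$, $\alpha = \alpha^*$, or $\alpha > \alpha^*$. For $\alpha > \alpha^*$, labelling the two solutions $\eta_1(\alpha) > \eta^* > \eta_2(\alpha)$, strict monotonicity of $\phi$ on each side makes $\eta_1$ increasing and $\eta_2$ decreasing with common limit $\eta^*$ as $\alpha \downarrow \alpha^*$, and because $\phi(0) = 7.5$ with $\phi'(0) < 0$ the value $0$ lies on the decreasing branch below $\eta^*$, giving $\eta_2(7.5) = 0$.
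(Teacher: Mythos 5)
Your overall scaffolding is the standard one (and the route taken in the references the paper quotes this theorem from): the Euler--Lagrange equation forces every critical point to be a Bingham density $f\propto\exp\bigl(\alpha\,\mm^{\top}\QQ[f]\,\mm\bigr)$ with $\tr\QQ[f]=1$; once axial symmetry is granted, your reduction of the self-consistency $\eta=\tfrac{\alpha}{2}(3s-1)$ to (\ref{eta-alpha}) via the identity $\int_0^1(3z^2-1)\mathrm{e}^{\eta z^2}\ud z=2\eta\int_0^1 z^2(1-z^2)\mathrm{e}^{\eta z^2}\ud z$ is correct and reversible; and your elementary facts about $\phi$ ($\phi(0)=7.5$, $\phi'(0)<0$, $\phi\to+\infty$ as $\eta\to\pm\infty$, and the double-integral formula for the sign of $\phi'$) all check out, including the deduction $\eta_2(7.5)=0$.

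However, the two points you yourself flag as open are exactly the substance of the theorem, and the first is not a technicality. (i) The axisymmetry step is a genuine gap: the observation that $u\mapsto\partial_u\ln Z$ is odd, bounded and strictly increasing does \emph{not} preclude a nonzero root of $\partial_u\ln Z=\tfrac{\alpha}{2}u$ — an odd increasing function crosses that line at some $u\neq0$ precisely when its slope at $0$ exceeds $\tfrac{\alpha}{2}$, which is the same mechanism that produces the anisotropic branch in the first place; so convexity alone cannot rule out biaxial self-consistent solutions. Excluding them requires working with the full coupled system for the three eigenvalues and proving a quantitative integral inequality, and this is where essentially all of the work of \citeasnoun{LiuZhangZhang2005CMS}, \citeasnoun{fatkullin2005critical} and \cite{Zhou2005} lies; without it you have not shown that every critical point has the form $h_{\eta,\nn}$, which is the main claim. (ii) The unimodality of $\phi$ is asserted rather than proved, yet it is what yields ``exactly two solutions'' for $\alpha>\alpha^*$, uniqueness at $\alpha=\alpha^*$, and the monotonicity of $\eta_{1},\eta_2$. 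This one can be closed cheaply: with $G(\eta)=\iint_{[0,1]^2}(z^2-w^2)^2(1-z^2-w^2)\mathrm{e}^{\eta(z^2+w^2)}\ud z\,\ud w$ (so that $\mathrm{sign}\,\phi'=-\mathrm{sign}\,G$), the function $H(\eta)=\mathrm{e}^{-\eta}G(\eta)$ has $H'(\eta)=-\iint(z^2-w^2)^2(1-z^2-w^2)^2\mathrm{e}^{\eta(z^2+w^2-1)}\ud z\,\ud w<0$, so $G$ changes sign exactly once, from positive to negative, and since $G(0)>0$ the change occurs at some $\eta^*>0$; this gives the strict decrease/increase of $\phi$ on $(-\infty,\eta^*)$ and $(\eta^*,\infty)$ that your last paragraph uses. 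With (ii) repaired in this way, the remaining — and central — gap in your proposal is (i).
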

The above theorem gives a  complete classification on all critical points of the Maier-Saupe energy functional. The three kinds of solutions for $\eta=\eta_1, \eta_2, 0$ are named as prolate, oblate, isotropic solutions.  Their stabilities are summarized in the following proposition. The proof can be found in \cite{zhang2007stable,WangZZCPAM}:
 \begin{Proposition}\label{prop:energy stability}
$h=\frac{1}{4\pi}$ ($\eta=0$) is a stable critical point of $A[f]$ if and only
if $\alpha<7.5$; If $\alpha>\alpha^*$, for any $\nn\in\BS$, $h_{\eta_1,\nn}$ is stable,
while $h_{\eta_2,\nn}$ is unstable. Therefore, for $\alpha>7.5$, $h_{\eta_1,\nn}$ are the only minimizers.
\end{Proposition}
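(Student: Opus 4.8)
The plan is to compute the second variation of $A$ at a critical point and decide its sign. Write $f=h+\epsilon g$ with $h$ a critical point and $\int_{\BS}g\,\ud\mm=0$ (the density $f$ has unit mass), and expand (\ref{eq:free energy-homo})--(\ref{potential}) with (\ref{potential-MS}). The entropy contributes $\frac12\int_{\BS}g^2/h\,\ud\mm$ at second order, and the interaction, being a quadratic form with the symmetric kernel $B(\mm,\mm')=\alpha\big(1-(\mm\cdot\mm')^2\big)$, contributes $\frac12\int_{\BS}\int_{\BS}B(\mm,\mm')g(\mm)g(\mm')\,\ud\mm\,\ud\mm'$; discarding the constant part of $B$ (legitimate since $\int g=0$) gives
\[
\delta^2 A[h](g)=\int_{\BS}\frac{g^2}{h}\,\ud\mm-\alpha\,\big|M[g]\big|^2,\qquad M[g]:=\int_{\BS}\Big(\mm\otimes\mm-\tfrac13\II\Big)g(\mm)\,\ud\mm ,
\]
with $|\cdot|$ the Frobenius norm. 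Stability of $h$ is then equivalent to $\delta^2 A[h](g)\ge0$ for all admissible $g$, the neutral directions being allowed to come from the rotational symmetry $\nn\mapsto R\nn$.

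For the isotropic state $h=\tfrac1{4\pi}$ the first term is $4\pi\|g\|_{L^2}^2$, so everything is governed by $|M[g]|^2/\|g\|_{L^2}^2$. If $g$ has no degree-two spherical-harmonic component then $M[g]=0$ and $\delta^2 A>0$; otherwise one reduces to $g=\AA:\mm\otimes\mm$ with $\AA$ symmetric traceless, for which $M[g]=\tfrac{8\pi}{15}\AA$ and $\|g\|_{L^2}^2=\tfrac{8\pi}{15}|\AA|^2$, so $\delta^2 A[\tfrac1{4\pi}](g)=\tfrac{8\pi}{15}|\AA|^2\big(4\pi-\tfrac{8\pi}{15}\alpha\big)$. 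This is positive definite iff $\alpha<\tfrac{15}{2}=7.5$, while at $\alpha=7.5$ a fourth-order expansion (equivalently, the branch $h_{\eta_2,\nn}$ emanating from $(\alpha,\eta)=(7.5,0)$) shows $\tfrac1{4\pi}$ is not a local minimizer. This gives the first assertion.

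For $h=h_{\eta,\nn}$ take $\nn=\ee_3$, set $z=\mm\cdot\nn$, and split $g$ into azimuthal Fourier modes about $\nn$; since $h_{\eta,\nn}$ is axisymmetric these modes are mutually orthogonal for both $\int\cdot/h$ and $M[\cdot]$, so $\delta^2 A$ splits. Modes of order $|k|\ge3$ have $M[g]=0$ and are positive. Minimizing $\int g^2/h$ at fixed $M[g]$ (the minimizers being $h_\eta$ times a quadratic polynomial in the $m_i$) reduces the rest to three scalars: the axisymmetric mode $k=0$ gives $\delta^2A\propto 1-\tfrac{3\alpha}{2}\mathrm{Var}_{h_\eta}(z^2)$; the pair $|k|=1$ gives $\delta^2A\propto 1-\alpha\langle z^2(1-z^2)\rangle_{h_\eta}$; and the biaxial pair $|k|=2$ gives $\delta^2A\propto 1-\tfrac{\alpha}{4}\langle(1-z^2)^2\rangle_{h_\eta}$, where $\mathrm{Var}_{h_\eta}$ and $\langle\,\cdot\,\rangle_{h_\eta}$ denote variance and average against $h_\eta$. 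Integration by parts turns the self-consistency relation (\ref{eta-alpha}) into the two identities $\alpha\langle z^2(1-z^2)\rangle_{h_\eta}=1$ and $\eta=\tfrac{\alpha}{2}\big(3\langle z^2\rangle_{h_\eta}-1\big)$. The first makes the $|k|=1$ scalar vanish identically --- this mode is exactly the zero mode produced by rotating $\nn$ --- so it never destabilizes. Differentiating $\alpha=J(\eta)$, with $J$ the function in (\ref{eta-alpha}), gives $\tfrac{\ud\alpha}{\ud\eta}=\tfrac{\alpha}{\eta}\big(1-\tfrac{3\alpha}{2}\mathrm{Var}_{h_\eta}(z^2)\big)$, so the $k=0$ scalar has the sign of $\mathrm{sign}(\eta)\cdot\mathrm{sign}\big(J'(\eta)\big)$: strictly positive at $\eta=\eta_1$ (where $\eta_1>\eta^*>0$ and $J'>0$ by Theorem~\ref{thm:critical point}), and strictly negative at $\eta=\eta_2$ when $0<\eta_2<\eta^*$, i.e. for $\alpha^*<\alpha<7.5$. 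Using $\alpha\langle z^2(1-z^2)\rangle_{h_\eta}=1$, the $|k|=2$ scalar is positive exactly when $\langle(1-z^2)(1-5z^2)\rangle_{h_\eta}<0$. Hence: for the prolate state $h_{\eta_1}$ it remains only to verify $\langle(1-z^2)(1-5z^2)\rangle_{h_{\eta_1}}<0$, after which $\delta^2A\ge0$ with kernel exactly the symmetry orbit, so $h_{\eta_1,\nn}$ is stable; for the oblate state $h_{\eta_2}$, when $\alpha^*<\alpha<7.5$ the $k=0$ direction is already a descent direction, while when $\alpha>7.5$ (so $\eta_2<0$, since $\eta_2(7.5)=0$ and $\eta_2$ is decreasing) the $k=0$ direction becomes positive but now $\langle(1-z^2)(1-5z^2)\rangle_{h_{\eta_2}}>0$ makes the $|k|=2$ direction a descent direction, so $h_{\eta_2,\nn}$ is unstable. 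Finally, for $\alpha>7.5$, $A$ is bounded below and weakly lower semicontinuous on $\{f\ge0:\int_{\BS}f=1\}$ (the entropy gives weak-$L^1$ compactness of minimizing sequences, the interaction is weakly continuous), so a minimizer exists; by Theorem~\ref{thm:critical point} it lies among $\tfrac1{4\pi},h_{\eta_1,\nn},h_{\eta_2,\nn}$, and the first and third are not local minima, so the minimizers are precisely the $h_{\eta_1,\nn}$, all of the same energy by rotational invariance.

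The one genuinely delicate ingredient is the pair of sign statements for the $|k|=2$ mode: they reduce to showing that the explicit one-variable integral $\eta\mapsto\int_0^1(1-z^2)(1-5z^2)\mathrm{e}^{\eta z^2}\,\ud z$ is negative for all $\eta>0$ and positive for all $\eta<0$. It vanishes, together with its first derivative along the solution curve, at $\eta=0$ --- which is precisely why the isotropic/oblate bifurcation sits at $\alpha=7.5$ --- so the proof cannot be soft and needs a monotonicity (no-sign-change) estimate for this integral. Everything else reduces, via the azimuthal decomposition, to the self-consistency relation and the shape of $J(\eta)$ already established in Theorem~\ref{thm:critical point}.
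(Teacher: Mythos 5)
Your strategy is the standard one (and essentially the one used in the references the paper cites for this proposition): compute the second variation $\int_{\BS} g^2/h\,\ud\mm-\alpha|M[g]|^2$, block-diagonalize it by azimuthal harmonics about $\nn$, reduce each block to a one-dimensional integral in $\eta$ via the worst-case perturbations $g=h\cdot(\text{degree-two harmonic})$, and exploit the self-consistency relation. I checked the pieces you state explicitly: the three scalars $1-\tfrac{3\alpha}{2}\mathrm{Var}_{h_\eta}(z^2)$, $1-\alpha\langle z^2(1-z^2)\rangle_{h_\eta}$, $1-\tfrac{\alpha}{4}\langle(1-z^2)^2\rangle_{h_\eta}$ for the $k=0,\pm1,\pm2$ sectors, the identification of the $|k|=1$ zero mode with rotations, the identity $\tfrac{\ud\alpha}{\ud\eta}=\tfrac{\alpha}{\eta}\bigl(1-\tfrac{3\alpha}{2}\mathrm{Var}_{h_\eta}(z^2)\bigr)$ along the branch, and the isotropic threshold $\alpha<15/2$; these are all correct, and the existence-of-minimizer argument for the final assertion is fine.

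The genuine gap is exactly the item you flag and then do not prove: the sign of $\Phi(\eta)=\int_0^1(1-z^2)(1-5z^2)\mathrm{e}^{\eta z^2}\,\ud z$ (negative for $\eta>0$, positive for $\eta<0$). This is not a peripheral estimate; it is the decisive inequality, since both the stability of $h_{\eta_1,\nn}$ (the $|k|=2$ block) and the instability of $h_{\eta_2,\nn}$ for $\alpha>7.5$ (where your $k=0$ block is positive and only the $|k|=2$ block can go negative) rest on it, and hence so does the concluding statement that $h_{\eta_1,\nn}$ are the only minimizers. Moreover your side remark about it is wrong: $\Phi(0)=0$ but $\Phi'(0)=-16/105\neq0$, so $\Phi$ does not vanish to second order at $\eta=0$ (what is true is that both the $k=0$ and $|k|=2$ scalars vanish simultaneously at the bifurcation point $(\alpha,\eta)=(7.5,0)$). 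Note also that $\Phi'$ itself changes sign ($\Phi'>0$ as $\eta\to-\infty$, $\Phi'(0)<0$), so the ``monotonicity estimate'' you appeal to cannot be a simple one-line convexity/monotonicity argument; the cited proofs handle precisely this no-sign-change statement with careful explicit estimates of such one-dimensional integrals, and without it your proof of the second and third assertions is incomplete. Two minor points: at $\alpha=7.5$ the failure of local minimality of $h=1/4\pi$ is a third-order (cubic, $\int g^3$) effect, not fourth-order, and that cubic computation should be stated if you want the ``only if'' at the endpoint; and for the prolate branch you should say explicitly why $J'(\eta_1)>0$ strictly (from the classification in Theorem~\ref{thm:critical point}, $J$ is strictly increasing for $\eta>\eta^*$), since you need strict positivity of the $k=0$ block.
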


Theorem \ref{thm:critical point} and Proposition \ref{prop:energy stability} inform us that:
the oblate soltion is always unstable; if $\alpha>7.5$, the prolate solution is the only stable solution, which corresponds to the nematic phase; if $\alpha<\alpha^*$, the isotropic solution is the only solution; for $\alpha\in(\alpha^*, 7.5)$, the prolate and isotropic solutions are both stable, which
indicates that the isotropic phase and nematic phase can coexist in this parameter region.

Classification of minimizers of the Onsager energy functional is much more difficult, since the interaction
potential is irregular and all even order moments of the orientation distribution function are involved
in the interaction part of the energy.  The axial symmetry of all solutions to the 2D problem is proved by \citeasnoun{ChenLiWang2010}. 
We refer to \cite{vollmer2017critical} and \cite{Ball2020} for some recent progresses on the 3D problem.

\subsubsection{Dynamic Doi theory and its inhomogeneous extension}

The molecular theory has been developed by \citeasnoun{doi1981molecular} to study  the homogeneous LC flow.
Under a given velocity gradient $\nabla \vv$, the evolution of the distribution function is given by the following
 equation:
\begin{eqnarray}\label{eq:homo-Doi}
\frac{\partial{f}(t,\mm)}{\partial{t}}=\f 1 {De}\CR\cdot(\CR{f}+f\CR
U)-\CR\cdot\big(\mm\times\kappa\cdot\mm{f}\big).
\end{eqnarray}
Here $De$ is the Deborah number which characterizes the average time tending to local equilibrium state,
$\CR=\mm\times\nabla_{\BS}$ is the rotational gradient operator on the unit sphere
and $\kappa=(\nabla \vv)^T$ is the transpose of the velocity gradient.
The molecular alignment field in turn induces an extra stress tensor to the bulk fluids which is given by
\begin{eqnarray}\label{eq:tensor-hom}
\sigma^{De}=\frac{1}{2}\DD:\langle\mm\mm\mm\mm\rangle_f-\frac{1}{De}\langle\mm\mm\times\CR\mu\rangle_f,
\end{eqnarray}
where $\DD=(\nabla \vv+(\nabla \vv)^T)/2$  is the strain rate tensor,  $\langle(\cdot)\rangle_f=\int_\BS(\cdot)f\ud\mm $ denotes the average under the distribution $f$, and $\mu$ is the chemical potential:
\begin{align*}
  \mu = \frac{\delta A[f]}{\delta f}=\ln f+U(\mm).
\end{align*}
The equation (\ref{eq:homo-Doi}) has been very successful in describing the
properties of LC polymers in a solvent. This model takes
into account the effects of hydrodynamic flow, Brownian motion, and
intermolecular forces on the molecular orientation distribution.
However, it does not include effects such as distortional elasticity and thus valid only in the limit of spatially homogeneous flows.

\citeasnoun{marrucci1991elastic} extended Doi's theory to the inhomogeneous case by incorporating
 the long-range interaction into the theory.
By using a truncated Taylor series expansion to approximate the nonlocal potential, the elastic energy
is then described by gradients of the second moments of the distribution function.
This method was subsequently developed by many people \cite{fengSL2000theory,wang2002hydrodynamic} to study the inhomogeneous LC flow.
However, instead of using the distribution as the sole order parameter,
these works used a combination of the tensorial order parameter and the distribution
function, and spatial variations are described by the spatial gradients of the tensorial order
parameter, which departs from the original motivation of the kinetic formulation.

\citeasnoun{wang2002kinetic} set up a formalism in which the
interaction between molecules is treated more directly by using the
position-orientation distribution function via interaction
potentials. They extended the free energy (\ref{eq:free energy-homo})
to include the effects of nonlocal intermolecular interactions
through an interaction potential as follows:
\begin{align}\label{energy:nonlocal-onsager}
\begin{aligned}
&A_\ve[f]=\int_{\Omega}\int_{\BS}f(\xx,\mm,t)(\ln{f(\xx,\mm,t)}-1)+\frac{1}{2}U_\ve(\xx,\mm,t)f(\xx,\mm,t)\ud\mm\ud\xx,\\
&U_\ve(\xx,\mm,t)=\CU_\ve{f}:=\int_{\Omega}\int_{\BS}\Be(\xx,\mm; \xx',\mm')f(\xx',\mm',t)\ud\mm'\ud\xx',
\end{aligned}
\end{align}
where $\Be(\xx,\mm; \xx',\mm')$ is the interaction potential between two molecules in the configurations $(\xx, \mm)$ and
$(\xx',\mm')$, which depends on the non-dimensional small parameter ${\ve}=\f {L^2} {L_0^2}$ (here $L$ is the length of the rods and $L_0$ is
the typical size of the flow region). There are two typical choices:
\begin{enumerate}
\item Hard-core excluded volume potential:
\begin{align}\label{potential:nonlocal-Onsager}
\Be(\xx,\mm; \xx',\mm')=
\left\{
\begin{array}{ll}
1, \quad{\small \text{molecule } (\mathbf{x},\mathbf{m}) \text{ is joint with
molecule } (\mathbf{x}',\mathbf{m}')},\\
0, \quad\text{\small disjoint with each other}.
\end{array}
\right.
\end{align}

\item Long-range Maier-Saupe interaction potential:
\begin{align}\label{potential:nonlocal-MS}
\Be(\xx,\mm; \xx',\mm')=\frac{1}{\ve^{3/2}}g\Big(\frac{\xx-\xx'}{\sqrt{\ve}}\Big)\al|\mm\times\mm'|^2,
\end{align}
where $g(\xx)$ is a smooth function on $\mathbb{R}^3$  with $\int_\BR g(\xx)\ud{\xx}=1$,
and the small parameter $\sqe$ represents the typical interaction distance.
\end{enumerate}

Both the potentials are capable to capture the nonlocal interaction between molecules, and thus can describe
distortion effects of the molecular alignment.
The hard-core potential indeed coincides with Onsager's choice adopted in \cite{onsager1949effects}. Note that
different geometric shapes of molecules will lead to different energy forms. For NLC,
the molecules are commonly assumed to be prolate ellipsoids or spherocylinders.
The long range Maier-Saupe interaction potential (\ref{potential:nonlocal-MS}), proposed in \citeasnoun{yu2007kinetic}, can be viewed as a smooth
approximation for the hard-core potential, which is easier to analyze and simulate.

Based on the nonlocal energy, \citeasnoun{wang2002kinetic} presented a inhomogeneous model for the LC flow.
Define the chemical potential as
\begin{align*}
  \mue=\ln{f(\xx,\mm,t)+U_\ve(\xx,\mm,t)}.
\end{align*}
Then the inhomogeneous (non-dimensional) system reads as:
\begin{eqnarray}\label{eq:LCP-non}
&&\frac{\pa{f}}{\pa{t}}+\vv\cdot\nabla{f}=\frac{\ve}{De}\nabla\cdot\big\{
\big(\gamma_{\|}\mm\mm+\gamma_{\bot}(\II-\mm\mm)\big)\cdot f\nabla\mue\big\}\nonumber\\
&&\qquad\qquad+\frac{1}{De}\CR\cdot(f\CR \mue)
-\CR\cdot(\mm\times\kappa\cdot\mm{f}),\\\nonumber
&&\frac{\pa{\vv}}{\pa{t}}+\vv\cdot\nabla\vv=-\nabla{p}+\frac{1}{Re}\Big\{\nabla\cdot(\tau^s+\tau^e)+\FF^e_\ve \Big\}
\end{eqnarray}
Here $\vv$ is the fluid velocity, $p$ is the pressure, and   $\gamma_{\|}, \gamma_{\bot}$ are, respectively, the translational diffusion coefficients parallel to and
normal to the orientation of the LCP molecule. $Re$ is the Reynolds number. The viscous stress $\tau^s$, the elastic stress $\tau^e$ and the body force $\FF^e$ are
given by \begin{align*}
&\tau^s=2\gamma\DD+\frac{1-\gamma}{2}\DD:\langle\mm\mm\mm\mm\rangle_f,\\
& \tau^e=-\frac{1-\gamma}{De}\langle\mm\mm\times\CR\mue\rangle_f,\quad
\FF^e=-\frac{1-\gamma}{De}\langle\nabla\mue\rangle_f.
\end{align*}
System (\ref{eq:LCP-non}) has the following energy-dissipation relation:
\begin{align}
&\frac{\ud}{\ud{t}}\Big(\int_{\Omega}\frac{Re}{2(1-\gamma)}|\vv|^2\ud\xx+\frac{1}{De}A_\ve[f]\Big)\nonumber\\
&= - \int_{\Omega}\Big(\frac{\gamma}{1-\gamma}|\DD|^2+\frac{1}{2}\big\langle(\mm\mm:\DD)^2\big\rangle_f
+\frac{1}{De^2}\big\langle|\CR\mue|^2\big\rangle_f \quad\nonumber \\
&\qquad+\frac{\ve}{De^2(1-\gamma)}\big\langle\nabla\mue\cdot(\gamma_{\|}\mm\mm+\gamma_\bot(\II-\mm\mm))\cdot\nabla\mue\big\rangle_f
\Big)\ud\xx,
\end{align}
We refer to \citeasnoun{yu2007kinetic} for the numerical study and \citeasnoun{ZhangZhang-SIAM} for the well-posedness of the system (\ref{eq:LCP-non}).

\subsection{Vector Theories}

\subsubsection{Static vector models: the Oseen-Frank theory}

The molecular theories provide a detailed description for LCs, however, it is not convenient to use.
The simplest model to study the equilibrium configuration for NLCs is the Oseen-Frank model, which is proposed by
 \citeasnoun{oseen1933theory} and \citeasnoun{frank1958liquid}. It neglects the molecular details and use  a unit vector $\nn(\xx)$ to describe the
average orientation of LCs molecules at position $\xx$.
Then the distortion energy, which is called as Oseen-Frank energy, takes the following form:
\begin{align}\nonumber
E_{OF}(\nn,\nabla\nn)=&~\frac{k_1} 2(\na\cdot\nn)^2+\frac{k_2}2|\nn\cdot(\na\times\nn)|^2+\frac{k_3} 2|\nn\times(\na\times \nn)|^2\\
&+\frac{(k_2+k_4)}2\big(\textrm{tr}(\na\nn)^2-(\na\cdot\nn)^2\big). \label{energy:Oseen-Frank}
\end{align}
The constants $k_1, k_2, k_3$ represent modules  for  three different kinds of pure deformation respectively: {\it splay}, {\it twist} and {\it bending},
which are illustrated in Figure \ref{fig:distortion}. For prolate nematics, one often has
$$k_3>k_1>k_2>0.$$

\begin{figure}\label{fig:distortion}
    \begin{center}
    \includegraphics[width=0.8\columnwidth]{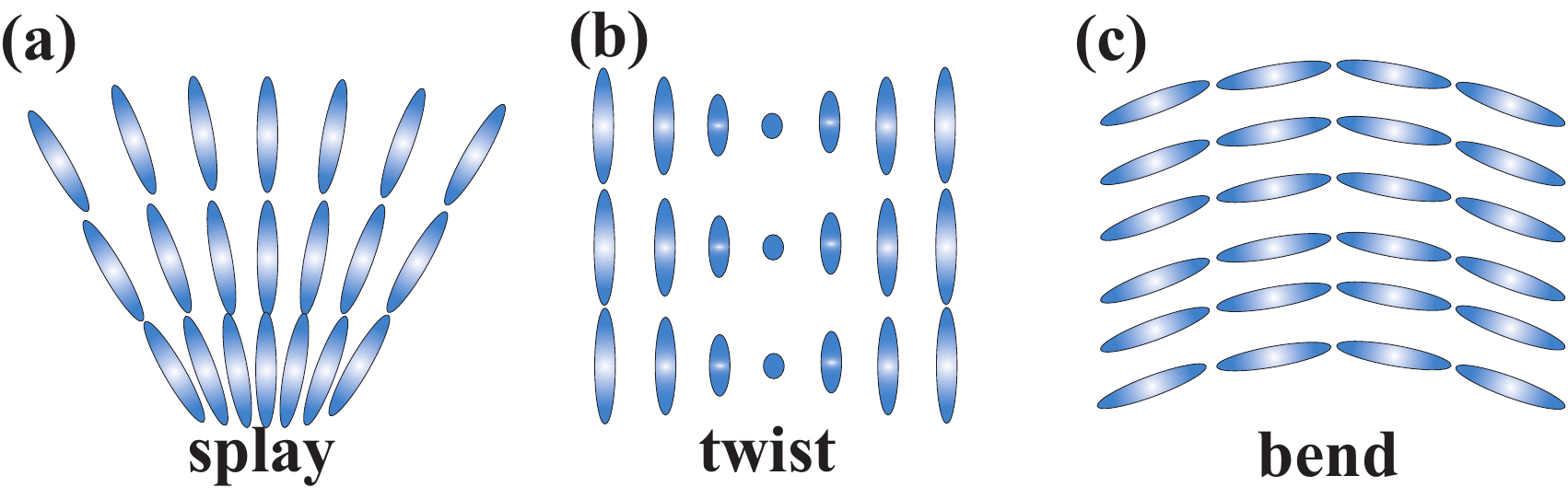}
    \caption{Three kinds of distortion: (a) splay, (b) twist, and (c) bend.}
    \label{splay_twist_bend}
    \end{center}
\end{figure}

The last term is actually a null Lagrangian which can be reduced to boundary terms. A simplest reduction for the Oseen-Frank energy
is the case $k_1=k_2=k_3=k$ and  $k_4=0$, referred as one-constant approximation, which leads to the Dirichlet energy
\begin{align*}
E(\nabla \nn)= \frac{k}{2}|\nabla \nn|^2.
\end{align*}

For given boundary data on a bounded domain, the observed configuration usually corresponds to a minimizer of the Oseen-Frank energy.
Applying the method of calculus of variation, a minimizer should satisfies, at least formally, the following Euler-Lagrange equations:
\begin{align*}
\nn\times\div\Big(\frac{\partial}{\partial\nabla\nn}E_{OF}(\nn,\nabla\nn)-\frac{\partial}{\partial \nn}E_{OF}(\nn,\nabla\nn)\Big)=0,
\end{align*}
or equivalently
\begin{align}\label{EL:Oseen-Frank}
\div\Big(\frac{\partial}{\partial\nabla\nn}E_{OF}(\nn,\nabla\nn)-\frac{\partial}{\partial \nn}E_{OF}(\nn,\nabla\nn)\Big)=\lambda \nn.
\end{align}
Note that  $\nn$ need to satisfy the unit-norm condition: $|\nn|=1$, which gives a nonlinear constraint and induces a Lagrange multiplier  $\lambda\in\mathbb{R}$,
in the above equation. In the one-constant approximation case, the equation reduces to the harmonic map equation:
\begin{align*}
\Delta \nn =-|\nabla\nn|^2\nn.
\end{align*}

Defects in vector theories are described by singularities in $\nabla\nn$. For instance, the configuration
$\nn(\xx)=\frac{\xx}{|\xx|}$ ($\xx\in\mathbb{R}^3$) is a solution to (\ref{EL:Oseen-Frank}), which is called the hedgehog solution.
It is a typical  and important example of point defects. In a 2D region, $\nn(\xx)=\frac{\xx}{|\xx|}$ is formally a solution to (\ref{EL:Oseen-Frank}). However, the energy blows up near the
singular point $\xx=0$. That is, the energy of a 2D point defect in vectorial description is infinite.
Moreover, the following theorem has been proved by \citeasnoun{HardtKL1986CMP}. 
\begin{theorem}
If $\nn\in H^1(\Omega, \mathbb{S}^2)$ is a minimizer of the Oseen–Frank energy $E_{OF}$, then $\nn$ is analytic on $\Omega\setminus Z$, where $Z$ is a relatively closed subset of $\Omega$ which has one dimensional Hausdorff measure zero.
\end{theorem}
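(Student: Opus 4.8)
The plan is to carry out the partial-regularity program for energy-minimizing $\BS$-valued maps, adapted to the anisotropy of $E_{OF}$, and then to upgrade smoothness of the regular set to analyticity by elliptic regularity. The structural input is that, under Ericksen's inequalities on $k_1,\dots,k_4$ (valid in particular when $k_3>k_1>k_2>0$ with $|k_4|$ suitably controlled), the last term of \eqref{energy:Oseen-Frank} is a quadratic null Lagrangian, so that, modulo its divergence, the Oseen--Frank density $W(\nn,\na\nn)$ (the integrand in \eqref{energy:Oseen-Frank}) is comparable to the Dirichlet density:
\[
c_0\,|\na\nn|^2\ \le\ W(\nn,\na\nn)+\div(\cdots)\ \le\ C_0\,|\na\nn|^2\qquad(\nn\in\BS),
\]
for some $0<c_0\le C_0$. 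From this a minimizer exists by the direct method — the null-Lagrangian contribution is weakly $H^1$-continuous while the remainder is coercive and weakly lower semicontinuous — and any minimizer is a weak solution of the Euler--Lagrange system \eqref{EL:Oseen-Frank}, a quasilinear second-order system that is elliptic because its linearization satisfies the Legendre--Hadamard condition under the same inequalities, and has real-analytic coefficients (the Lagrange multiplier $\lambda=\lambda(\nn,\na\nn)$ is polynomial in $\nn,\na\nn$).

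The heart of the proof is an $\e$-regularity estimate: there exist $\e_0>0$ and $\theta\in(0,\tfrac12]$ such that
\[
r^{-1}\!\!\int_{B_r(x_0)}\!\!|\na\nn|^2\,\dx<\e_0\quad\Longrightarrow\quad (\theta r)^{-1}\!\!\int_{B_{\theta r}(x_0)}\!\!|\na\nn|^2\,\dx\le\tfrac12\,r^{-1}\!\!\int_{B_r(x_0)}\!\!|\na\nn|^2\,\dx .
\]
Iterating gives a Campanato-type decay $r^{-1}\int_{B_r(x_0)}|\na\nn|^2\lesssim r^{2\mu}$, hence $\nn\in C^{0,\mu}$ near $x_0$. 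I would obtain the decay by a comparison argument using minimality: on a ``good'' sphere $\partial B_\rho$ chosen via Fubini so that its boundary Dirichlet energy is controlled by the bulk energy, interpolate (Luckhaus' lemma) between $\nn|_{\partial B_\rho}$ and its spherical average to build a competitor on $B_\rho$ of small energy — the average stays away from the origin because small energy forces small $L^2$-oscillation, so composing with the nearest-point retraction onto $\BS$ is harmless. Minimality of $\nn$ dominates $\int_{B_\rho}W(\nn,\na\nn)$ by the competitor's energy, and the coercivity above turns this into the asserted geometric decay. The same comparison, run at all radii, also yields an \emph{almost}-monotonicity of $e^{Cr}r^{-1}\int_{B_r(x_0)}W(\nn,\na\nn)$ (exact monotonicity fails, since the degree-zero homogeneous extension is not energy-nonincreasing for an anisotropic density, but the error is absorbed by Gronwall), which makes the density $\Theta(x_0):=\lim_{r\to0}e^{Cr}r^{-1}\int_{B_r(x_0)}W(\nn,\na\nn)$ well defined.

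On the open set where $\nn\in C^{0,\mu}$, Schauder estimates applied to \eqref{EL:Oseen-Frank} (whose nonlinearity is analytic in $\nn$ and quadratic in $\na\nn$) bootstrap to $\nn\in C^\infty$, and Morrey's theorem on analyticity of solutions of analytic elliptic systems then gives real analyticity. Put $Z:=\{x_0\in\Omega:\Theta(x_0)\ge\e_0\}$. By the $\e$-regularity estimate, $\nn$ is analytic on $\Omega\setminus Z$; and $Z$ is relatively closed, because if $\Theta(x_0)<\e_0$ then $r^{-1}\int_{B_r(x_0)}|\na\nn|^2<\e_0$ at some small $r$, hence the same holds at all $x$ near $x_0$ by continuity of $x\mapsto\int_{B_r(x)}|\na\nn|^2$, so $\e$-regularity applies throughout a neighborhood. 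Finally, $\mathcal H^1(Z)=0$: Lebesgue differentiation gives $\Theta(x_0)=0$ for a.e.\ $x_0$, so $|Z|=0$; for $\delta>0$ choose, by Vitali, balls $B_{r_i}(x_i)$ with $x_i\in Z$, $r_i<\delta$, the dilates $5B_{r_i}$ disjoint and $Z\subset\bigcup_i B_{5r_i}(x_i)$, on each of which the almost-monotonicity yields $\int_{B_{r_i}(x_i)}|\na\nn|^2\gtrsim\e_0 r_i$; then $\sum_i r_i\lesssim\e_0^{-1}\int_{\{\mathrm{dist}(\cdot,Z)<5\delta\}}|\na\nn|^2\to0$ as $\delta\to0$, whence $\mathcal H^1(Z)=0$.

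The main obstacle is the $\e$-regularity step: constructing an admissible $\BS$-valued competitor and estimating its \emph{anisotropic} energy — the comparison unavoidably introduces the factor $C_0/c_0$ — while only an approximate monotonicity formula is available. This is precisely why the conclusion is stated with $\mathcal H^1(Z)=0$, rather than the discreteness of $Z$ valid for genuine harmonic maps in dimension three: the latter needs Federer's dimension-reduction argument together with an \emph{exact} monotonicity formula and a classification of homogeneous tangent maps, both unavailable for the anisotropic Oseen--Frank density.
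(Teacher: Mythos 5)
The paper itself does not prove this theorem; it quotes it from Hardt--Kinderlehrer--Lin (1986), so your proposal has to be measured against their argument. Your overall architecture is the right one and is essentially theirs: coercivity of the Oseen--Frank density modulo the null Lagrangian, an $\e$-regularity/decay lemma obtained from minimality by a comparison construction, H\"older continuity upgraded to analyticity by elliptic (Schauder/Morrey) theory, and a covering argument giving $\mathcal{H}^1(Z)=0$ rather than discreteness of $Z$ -- and you correctly explain why dimension reduction is unavailable for anisotropic constants.

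However, the mechanism you give for the central decay estimate has a genuine gap. Comparing $\nn$ on a good sphere $\partial B_\rho$ with a competitor that interpolates between $\nn|_{\partial B_\rho}$ and its average only yields, via minimality and coercivity, an inequality of the form $\int_{B_\rho}|\nabla\nn|^2\le C\int_{B_r}|\nabla\nn|^2$ with $C\ge C_0/c_0\ge 1$: it controls the energy at the same scale and cannot produce the factor $\tfrac12$ at the smaller scale $\theta r$ that your Campanato iteration needs. To get decay one must compare with a replacement that captures the harmonic behaviour of $\nn$ -- in Hardt--Kinderlehrer--Lin this is the componentwise harmonic extension of the boundary values projected back to $\BS$ (legitimate because small scaled energy keeps the extension close to the sphere) -- and combine the interior decay of that replacement with a Caccioppoli/reverse-Poincar\'e inequality for minimizers, or run a blow-up/compactness argument \`a la Schoen--Uhlenbeck in which Luckhaus-type gluing transfers minimality to the limit of the rescaled maps; the quantity that decays is then the normalized $L^2$-oscillation rather than the scaled energy directly. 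Relatedly, the ``almost monotonicity'' of $e^{Cr}r^{-1}\int_{B_r}W$ you invoke is not what comparison gives: comparison with the degree-zero homogeneous extension yields monotonicity of $r^{-\lambda}\int_{B_r}W$ for some $\lambda=c_0/C_0<1$, not an exponentially corrected density. That particular issue is repairable -- define $Z$ as the set of points at which the $\e$-regularity hypothesis fails at every scale, so the lower bound $r^{-1}\int_{B_r}|\nabla\nn|^2\ge\e_0$ holds at all small radii by definition, and the standard measure-theoretic lemma then gives $\mathcal{H}^1(Z)=0$ -- but the decay step itself is not supplied by your constant-competitor comparison and needs the harmonic-replacement (or blow-up) argument.
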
 

This fundamental result excludes the possibility of
line defects, which have dimension one, under the framework of the Oseen-Frank theory.
To resolve this problem, \citeasnoun{Ericksen1990} proposed a vector model with an extra scalar order  parameter $s
\in [-\frac{1}{2}, 1]$, which represents the degree of orientation. Under the one-constant approximation, the Ericksen free energy takes the form
\begin{equation}
  E(s, \nn) = \int_{\Omega} \psi(s) + k |\nabla s|^2 + s^2 |\nabla \nn|^2 \dd \x,
\end{equation}
where $k > 0$ is a parameter,  $\psi(s)$ is the potential function satisfying:
\begin{itemize}
\item $\lim_{s \rightarrow 1} \psi(s) =  \lim_{s \rightarrow - 1/2} \psi(s) = + \infty,$
\item there exists $s^{*} \in (0, 1)$ such $\psi(0) > \psi(s^{*}) = \mathrm{min}_{s \in [-1/2, 1]} \psi(s),$
\item $\psi'(0) = 0.$
\end{itemize}
In Ericksen's theory, the defects are defined as zero set of $s$, which permits the line defect or a 2D point defect \cite{lin1991nematic,ambrosio1990existence}. We refer to \citeasnoun{Ericksen1990}
for details.

\subsubsection{Dynamical vector models: the Ericksen-Leslie theory}

The dynamic continuum theory for LC flows was  established by \cite{Leslie1968} and \cite{ericksen1961conservation}.
The full system, which is called Ericksen-Leslie system, takes the form
\begin{eqnarray}\label{eq:EL}
\left\{
\begin{split}
&\vv_t+\vv\cdot\nabla\vv=-\nabla{p}+\nabla\cdot\sigma,\\
&\na\cdot\vv=0,\\
&\nn\times\big(\hh-\gamma_1\NN-\gamma_2\DD\cdot\nn\big)=0.
\end{split}\right.
\end{eqnarray}
Here $\vv$ is the fluid velocity, $p$ is the pressure,
and the stress $\sigma$ is given by the phenomenological constitutive relation
\begin{equation*}
\sigma=\sigma^L+\sigma^E,
\end{equation*}
where $\sigma^L$ is the viscous (Leslie) stress
\begin{align}\label{eq:Leslie stress}
\sigma^L=\alpha_1(\nn\nn:\DD)\nn\nn+\alpha_2\nn\NN+\alpha_3\NN\nn+\alpha_4\DD
+\alpha_5\nn\nn\cdot\DD+\alpha_6\DD\cdot\nn\nn
\end{align}
with $\DD=\frac{1}{2}(\kappa^T+\kappa), \kappa=(\na \vv)^T$, and
\begin{equation*}
\NN=\nn_t+\vv\cdot\nabla\nn+\BOm\cdot\nn,\quad\BOm=\frac{1}{2}(\kappa^T-\kappa).
\end{equation*}
The constants $\al_1, \cdots, \al_6$ in (\ref{eq:Leslie stress}) are called the Leslie coefficients.  While, $\sigma^E$ is the elastic (Ericksen) stress
which is given by
\begin{eqnarray}\label{eq:Ericksen}
\sigma^E=-\frac{\partial{E_{OF}}}{\partial(\nabla\nn)}\cdot(\nabla\nn)^T,
\end{eqnarray}
and the molecular field $\hh$ is given by
\begin{equation*}
\hh=-\frac{\delta{E_{OF}}}{\delta{\nn}}=
\nabla\cdot\frac{\partial{E_{OF}}}{\partial(\nabla\nn)}-\frac{\partial{E_{OF}}}{\partial\nn},
\end{equation*}
The Leslie coefficients and $\gamma_1, \gamma_2$ satisfy the following relations
\begin{align}
&\alpha_2+\alpha_3=\alpha_6-\alpha_5,\label{Leslie relation}\\
&\gamma_1=\alpha_3-\alpha_2,\quad \gamma_2=\alpha_6-\alpha_5,\label{Leslie-coeff}
\end{align}
where (\ref{Leslie relation}) is called Parodi's relation derived from the Onsager reciprocal relation \cite{parodi1970stress}. These two relations
ensure that the system has a basic energy dissipation law:
\begin{align}
\frac{\ud}{\ud{t}}\Big(\int_{\BR}\frac12|\vv|^2\ud\xx+E_{OF}\Big)
=&- \int_{\BR}\Big((\alpha_1+\frac{\gamma_2^2}{\gamma_1})(\DD:\nn\nn)^2
+\alpha_4|\DD|^2\qquad\nonumber\\
&\quad+(\alpha_5+\alpha_6-\frac{\gamma_2^2}{\gamma_1})|\DD\cdot\nn|^2
+\frac{1}{\gamma_1}|\nn\times\hh|^2\Big)\ud\xx.\label{EL_energy_law}
\end{align}

Besides the lack of ability to describe line defects, vector models have some other drawbacks.
For example, from the physical viewpoint, $\nn(\xx)$ is not distinguishable to $-\nn(\xx)$. This is referred as
the head-to-tail symmetry of LCs, which can not be inherently revealed by the vectorial description.
Indeed, there are some configurations cannot be described by a vector field.
For example, consider the point defect in 2D with degree $1/2$
(see Fig. \ref{fig:defect}(c)) and a circle near the defect point. The alignment on the circle is a smooth line field.
However, one can not define a continuous vector field $\nn\in \mathbb{S}^1$ on this circle.
This problem is carefully discussed in \cite{Ball2011} for more general domains.

In addition, in vector theories, it is assumed that the orientation of the LC molecules has a preferred
alignment at a material point. In most cases this assumption is reasonable. However, there are some situations that
a preferred alignment can not be defined, for example, near the core of a defect.
So vector models fail to give accurate descriptions for molecular alignments near the core of defects.

\subsection{Tensor Theories}
%

Despite its successes in  predicting the phase transition and rheological parameters for LCs,
the molecular theory is not convenient in practice since it always leads to a high-dimensional problem with expensive costs.
Therefore, it is natural to explore alternative models to simulate LC flow or complex patterns.
A common method to reduce the molecular theory is to consider
the second order moment of the probability distribution function $f(\xx, \mm)$:
\begin{align}\label{def:Q}
\QQ(\xx)=&\int_\BS (\mm\mm-\frac13\II)f(\xx, \mm)\ud \mm \in \mathbb{Q}=\Big\{\QQ \in \mathbb{M}^{3\times 3}, \QQ=\QQ^T, \mathrm{tr}~\QQ=0\Big\}, \nonumber
\end{align}
which is called $\QQ$-tensor, as the concerned order parameters.
Apparently, $\QQ$ has five independent components. Let $\lambda_1, \lambda_2, \lambda_3$ be the three eigenvalues of $\QQ$, then
we can write
\begin{align*}
  \QQ=\lambda_1\nn_1\nn_1+\lambda_2\nn_2\nn_2+\lambda_3\nn_3\nn_3,
\end{align*}
where $\nn_1, \nn_2, \nn_3$ are unit norm vectors with $\nn_i\cdot\nn_j=0\ (1\le i<j\le 3)$. The definition (\ref{def:Q}) gives
a constraint for the eigenvalues of $\QQ$:
\begin{align*}
\QQ\in \mathbb{Q}_{phy}=\Big\{\QQ\in\mathbb{Q}: \lambda_1, \lambda_2,\lambda_3\in\Big(-\frac13,\frac23\Big)\Big\}.
\end{align*}
One can classify $\QQ$ into three classes:
\begin{itemize}
\item If $\QQ$ has three equal eigenvalues, i.e. $\QQ=0$, we say $\QQ$ is {\it isotropic}.
\item If the eigenvalues have two distinct value, then there exist $s\in\mathbb{R},~\nn\in\BS$, such that
$$\QQ=s(\nn\nn-\frac13\II).$$
In this case, we say $\QQ$ is  {\it uniaxial};
\item If the three eigenvalues are distinct, we say $\QQ$ is {\it biaxial}; In this case we can find $s,\lambda\in\mathbb{R},~\nn,\mm\in\BS$, such that
$s\neq\lambda$, $\mm\bot\nn$ and
$$\QQ=s(\nn\nn-\frac13\II)+\lambda(\mm\mm-\frac13\II).$$
\end{itemize}
If the LC material retains at the liquid state, i.e., the alignment are disorder, then
the distribution function $f(\xx,\mm)$ is the uniform distribution on $\BS$, which implies $\QQ=0$, or equivalently  $\QQ$ is isotropic;
If the material retains at the LC state, i.e., $f(\xx,\mm)$ is axially symmetric function on $\BS$, that is $f(\xx,\mm)=f(\nn(\xx)\cdot\mm)$, then
\begin{align*}
  \QQ=s(\nn\nn-\frac13\II)\quad\text{ with }s=\int_\BS\frac{3(\nn\cdot\mm)^2-1}{2}f(\nn\cdot\mm)\ud\mm ,
\end{align*} which means that $\QQ$ is uniaxial.

\subsubsection{Static $\QQ$-tensor models}

For LC materials, the total energy $F^{LG}(\QQ,\nabla\QQ)$ consists of  two parts: {\it the bulk energy} $F_b(\QQ)$ which dictates the preferred state of the material, and {\it the elastic energy} $F_e(\QQ, \nabla \QQ)$
which comes from the distortion of LCs:
\begin{align*}
F^{LG}(\QQ,\nabla\QQ)=F_b(\QQ)+F_e(\QQ,\nabla\QQ).
\end{align*}
The energy should be frame indifference, that is, for any $\PP\in O(3)$,
\begin{align*}
F^{LG}(\tilde \QQ, \tilde{\mathbf{\Psi}})=F^{LG}(\QQ, \tilde{\mathbf{\Psi}}),
\end{align*}
where $\mathbf{\Psi}=\nabla\QQ$ with $\Psi_{ijk}=Q_{ij,k}$ and $\tilde \QQ =\PP\QQ\PP^T$, $\tilde{\mathbf{\Psi}}_{ijk}=P_{ii'}P_{jj'}P_{kk'}\Psi_{ijk}$.


The bulk energy is a function of the tensor $\QQ$ which should predict the isotropic-nematic phase transition for LCs. Therefore, at high temperatures,
the bulk energy should arrive its minimum at the isotropic state, while at low temperature, its minimizers should be uniaxial which represents the nematic phase.
In addition, due to the frame indifference, the bulk energy should depend only on the eigenvalues of $\QQ$.


A simplest form meets these requirements takes the following polynomial form:
\begin{align}
  F_b(\QQ)=\frac{a}{2}|\QQ|^2-\frac{b}{3}\tr \QQ^3+\frac{c}{4}|\QQ|^4,
\end{align}
where $$|\QQ|^2=\sum_{i,j=1}^3Q_{ij}^2,~ \tr \QQ^3=\sum_{i,j,k}Q_{ij}Q_{jk}Q_{ki},~|\QQ|^4=(|\QQ|^2)^2.$$
Here $a, b, c$ are constants depending on materials and temperature in general with $b, c>0$. In particular, the parameter $a$
plays key roles to the isotropic-nematic phase transition, which is usually assumed by $a=A(T-T_*)$,
where $T$ is the temperature and $T_{*}$ is the critical temperature at which the isotropic phase loses stability.

All the (possible) critical points of the bulk energy $f_b$ are given by
$$\QQ=s\left(\nn\nn-\frac13\II\right),\quad s\in\left\{s_0=0, s_{\pm}=\frac{-b\pm\sqrt{b^2-24ac}}{4c}\right\}.$$
Stability or instability of the critical points are shown in Table \ref{table:stability}, where the three critical temperatures are given by:
\begin{align}
T_c=\frac{b^2}{27Ac}+T_*, \quad T_{II}=\frac{b^2}{24Ac}+T_*.
\end{align}
These assertions can be proved straightforwardly. When the temperature $T<T_*$, the isotropic state loses stability and the bulk energy is minimized
by tensors in the minimal manifold
\begin{align}\label{set:minimal-manifold}
  \mathcal{M}=\Big\{s_+\Big(\nn\nn-\frac13\II\Big):~  s_{+}=\frac{-b+\sqrt{b^2-24ac}}{4c}, \nn\in\mathbb{S}^2\Big\}.
\end{align}

\begin{table}[tp] \centering \caption{Stability/instability of critical points for LdG energy}
  \begin{threeparttable}
  \label{table:stability}
    \begin{tabular}{ccccccc}
    \toprule
    \multirow{2}{*}{Temperature}&
    \multicolumn{3}{c}{ Critical points}\cr
    \cmidrule(lr){2-4}
    &$s_0$&$s_-$&$s_+$\cr
    \midrule
     $T<T_{*}$ & unstable & unstable &global minimizer \\
     $T_{*}<T<T_c$ & local minimizer  & unstable & global minimizer\\
     $T_c<T<T_{II}$ & global minimizer & unstable & local minimizer\\
     $T_{II}<T$ & global minimizer & not exist & not exist\\
    \bottomrule
    \end{tabular}
    \end{threeparttable}
\end{table}

One should note that the polynomial energy is phenomenological, that is, the precise physical meaning of coefficients $a, b, c$ are not very clear and
not easy to be a prior determined. Moreover, there is no term force the tensor being in the physical space $\mathbb{Q}_{phy}$, and thus
$\QQ$ may have eigenvalues greater than $2/3$ or less than $-1/3$.
Another point should to be minded is that, the polynomial bulk energy is a finitely truncated Taylor expansion of a
real bulk energy around $\QQ=0$.  Therefore, it is only valid near the isotropic-nematic  transition temperature $T_*$.


To obtain a reliable tensorial model for low temperature materials (far from the transition point), a natural method is by taking the minimal entropy approximation from the molecular energy, which has been applied by \citeasnoun{ball2010nematic}. More precisely, one can define the energy as
\begin{align}\begin{aligned}
F_b(\QQ)& =\min_{f\in \mathcal{A}_\QQ} \int_{\BS}f(\mm)\ln f(\mm)-\alpha |\QQ|^2,\\
\text{where }&f\in \mathcal{A}_\QQ=\Big\{f: f\ge 0, \int_\BS f =1, \int_\BS (\mm\mm-\frac13\II)f =\QQ\Big\}.
\end{aligned}\label{energy:maxentropy}
\end{align}
The above energy can also be obtained by replacing the orientation distribution function by the Bingham distribution of given second momentum $\QQ$. In other words,
for given $\QQ\in\mathbb{Q}_{phy}$, let
$\BB_\QQ$ be the unique trace-free symmetric matrix (see \citeasnoun{LiWangZhang2015DCDSB} for a proof) which satisfies
\begin{align}\label{relation:BQ}
  \frac{\int_\BS (\mm\mm-\frac13\II){\exp(\BB_\QQ:\mm\mm)}\ud\mm}{\int_\BS \exp(\BB_\QQ:\mm\mm)\ud\mm} =\QQ,
\end{align}
and the Bingham distribution $f_{\QQ}$ be given as:
\begin{align}\label{relation:fQ}
f_{\QQ}(\mm)=\frac{\exp(\BB_\QQ:\mm\mm)}{Z_\QQ},\quad Z_\QQ=\int_\BS \exp(\BB_\QQ:\mm\mm).
\end{align}
Then the minimum in (\ref{energy:maxentropy}) is attained by $f_\QQ$, i.e.,
\begin{align*}
F_b(\QQ)= \int_{\BS}f_\QQ(\mm)\ln f_\QQ(\mm)-\alpha |\QQ|^2=\BB_\QQ:\QQ-\ln Z_\QQ -\alpha |\QQ|^2.
\end{align*}
Indeed, note that for fixed second moment, the minimum is achieved by the distribution satisfying
$$\ln f - \BB:\mm\mm =\text{c,}$$  for some constant $c$ and trace-free symmetric matrix $\BB$, which implies
$f$ must take the form (\ref{relation:fQ}) with $\BB$ given by (\ref{relation:BQ}).

On the other hand, for any $\BB\in\mathbb{Q}$, let $\omega (\BB)=\ln \int_\BS \exp(\BB:\mm\mm)\ud\mm$ be a convex function for $\BB$, then
the entropy part $\BB_\QQ:\QQ-\ln Z_\QQ $ equals to
$$\max_{\BB\in\mathbb{Q}}\{\BB:\QQ-\omega(\BB)\},$$
which is the Legendre's transform of the function $\omega(\BB)$. We refer to \citeasnoun{LiWangZhang2015DCDSB} for detailed discussions.

To capture the inhomogeneity of the alignment of LC molecules, one has to take into consideration the elastic energy.  The formulation of the elastic energy should
be frame indifferent and usually it is assumed to be quadratic in $\nabla \QQ$. Some examples are $V_i=V_i(\QQ,\nabla\QQ)$:
\begin{align*}
V_1= Q_{ij,k}Q_{ij,k},~V_2= Q_{ij,j}Q_{ik,k},~ V_3=Q_{ij,k}Q_{ik,j},~V_4=Q_{ij}Q_{ik,l}Q_{jk,l}.
\end{align*}
The difference $V_2-V_3$ can be written as $\partial_k(Q_{ik} Q_{ij,j}-Q_{ij}Q_{ik,j})$ which is a null Lagrangian.
The following energy form is commonly used as the elastic part energy for NLCs:
 \begin{eqnarray}\label{energy:Landau-deGennes}
F^{(e)}[Q]=L_1|\nabla \QQ|^2  + L_2 Q_{ij,j}Q_{ik,k} + L_3Q_{ik,j}Q_{ij,k} + L_4 Q_{lk}Q_{ij,k}Q_{ij,l}.
\end{eqnarray}
The $L_2$--$L_4$ terms correspond to the anisotropic elasticity of LC materials.

Formally, if we brutally let $Q(\xx)$ minimizes the bulk energy in the LdG energy at each point $\xx$,
then $Q(\xx)=s(\nn(\xx)\nn(\xx)-\frac\II 3)$, and the full energy reduces to the Oseen-Frank energy (\ref{energy:Oseen-Frank})
with the coefficients given by
\begin{align}\label{relation:LdG-OF}
\begin{aligned}
{k_1} =2s^2(2L_1+L_2+L_3-2sL_4),\quad k_2=4s^2(L_1-sL_4),\\
 k_3=2s^2(2L_1+L_2+L_3+4sL_4),\quad k_4=2s^2(2L_1+L_3-sL_4).
\end{aligned}
\end{align}

When $L_4=0$, the energy  (\ref{energy:Landau-deGennes}) is coercive \cite{longa1987extension}, i.e.,
\begin{align*}
F^{(e)}[Q]\ge c_0|\nabla\QQ|^2,\quad \text{for some }c_0>0,
\end{align*}
provided that
\begin{align*}
  L_1>0,~-L_1<L_3<2L_1,~L_1+\frac{5}{3}L_2+\frac{1}{6}L_3>0.
\end{align*}
On the other hand, if $L_4\neq 0$, the energy is not bounded from below \cite{ball2010nematic}. However, if the  $L_4$ term is neglected,
due to the form (\ref{relation:LdG-OF}), we can only recover the Oseen-Frank energy with $k_1=k_3$.

Defects in $\QQ$-tensor theory is not characterized by singularities of $\QQ$. Indeed, for minimizers of the
LdG energy with  suitable boundary conditions, it is usually smooth everywhere, since the
corresponding Euler-Lagrange equation is a semilinear elliptic system. To observe defects, one has
to look at the uniaxial limit of the solution. This is reasonable, since the elastic constants are usually small, which
means that the bulk energy will force the $\QQ$-tensor to be in the minimal manifolds $\mathcal{M}$ defined in \eqref{set:minimal-manifold}.
Singularities of  the limit $\QQ$-tensor should be regarded as the set of defects.
There are a number of results studying  the uniaxial limit of minimizers to the LdG energy with certain boundary conditions. We skip to Section \ref{subsec:analysis-defects} for further discussions.

\subsubsection{Dynamic $\QQ$-tensor models}
The dynamical $\QQ$-tensor theories for LCs can be classified into two kinds. The first
one are derived directly from physical considerations such as variational principle. The
Beris-Edwards model \cite{BerisEdwards} and Qian-Sheng model \cite{QianSheng1998generalized} belong to this class.
Given the free energy $F(\QQ,\nabla \QQ)$, the variation is denoted by
$$\mu_{\QQ}=\frac{\delta{F(\QQ,\nabla \QQ)}}{\delta \QQ}.$$
Then the Beris-Edwards model and Qian-Sheng model  can be written in the form:
\begin{align}
\frac{\pa{{\QQ}}}{\pa{t}}+\vv\cdot\nabla{\QQ}&=D(\mu_\QQ)+S(\QQ,\DD)
+\BOm\cdot \QQ- \QQ\cdot\BOm,\label{eq:Q-general-intro}\\
\frac{\pa{\vv}}{\pa{t}}+\vv\cdot\nabla\vv&=-\nabla{p}+\nabla\cdot\big(\sigma^{dis}+\sigma^{s}+\sigma^a+\sigma^{d}\big),
\label{eq:v-general-intro}\\\nonumber
\nabla\cdot\vv=0,
\end{align}
where  $D(\mu_\QQ)$ is the  diffusion term,
$S(\QQ,\DD)$ is the velocity-induced term,  $\sigma^d$ is the distortion stress,
$\sigma^a$ is the anti-symmetric part of orientational-induced stress,
$\sigma^{s}=\gamma S(\QQ, \mu_\QQ)$ is the symmetric stress induced by the molecular alignments,
which conjugates to $S(\QQ, \DD)$ ($\gamma$ is a constant), and $\sigma^{dis}$
is the additional dissipation stress.

In both systems, module some constants, $\sigma^a$ and $\sigma^d$ are the same :
\begin{align*}
\sigma_{ij}^d=\frac{\partial E(\QQ,\nabla \QQ)}{\partial (\QQ_{kl,j})}\QQ_{kl,i},\quad
\sigma^a=\QQ\cdot\mu_\QQ-\mu_\QQ\cdot\QQ.
\end{align*}
In Beris-Edwards system, the other terms are given by:
\begin{align*}
&D_{BE}=-\Gamma \mu_\QQ,\quad \sigma_{BE}^{dis}=\beta\DD,\quad \sigma_{BE}^s=S_{BE}(\QQ,\mu_\QQ),\\
&S_{BE}(\QQ,A)=\xi\Big((\QQ+\frac13Id)\cdot A+A\cdot(\QQ+\frac13Id)-2(\QQ+\frac13Id)(A:\QQ)\Big).
\end{align*}
While in Qian-Sheng's system, they are given by:
\begin{align*}
&D_{QS}=-\Gamma \mu_\QQ,\quad
\sigma_{QS}^s=-\frac12\frac{\mu_2^2}{\mu_1}\mu_{\QQ},\quad
S_{QS}(\QQ,\DD)=-\frac12\frac{\mu_2}{\mu_1}\DD,\\
&\sigma^{dis}_{QS}=\beta_1 \QQ(\QQ:\DD)+\beta_2 \DD+ \beta_3(\QQ\cdot\DD+\DD\cdot \QQ).
\end{align*}
We remark that in Qian-Sheng's original formulation, the inertial effect is also considered.

Another kind of dynamical $\QQ$-tensor models are obtained
by various closure approximations from Doi's kinetic theory. The main idea is to derive the evolution equation for the second
momentum  $\QQ$ from the evolution equation for the orientation distribution function $f$. This is a natural way of model reduction and
the parameters can be calculated from the kinetic equations rather than being phenomenologically determined.
However, by a direct calculation, one can find that the evolution of $\QQ$ depends on the fourth momentum of $f$, which can not be determined by $\QQ$.
In order to ``close'' the equation at the level of the second moment tensor, one needs to represent the fourth momentum by $\QQ$ approximately.
Doi introduced a simplest approximation: $$\langle \mm\mm\mm\mm\rangle_f=\langle\mm\mm\rangle_f\langle\mm\mm\rangle_f.$$
Other various  closure methods have also been presented, such as  the HL1/HL2 closure
\cite{hinch1976constitutive} and the Bingham closure \cite{chaubal1998closure}.
We refer to \cite{feng1998which} for the summary and comparison between these closure methods.
All these models can capture many qualitative features of the LC dynamics
effectively. However, they do not obey the energy dissipation law.

\subsubsection{A systematic way to derive new $\QQ$-tensor models}
In \citeasnoun{han2015microscopic}, the authors proposed a systematic way to derive a $\QQ$-tensor model from the molecular theory.
The main idea can be explained as follows: we start from the nonlocal Onsager molecular energy functional (\ref{energy:nonlocal-onsager})
with suitable given interaction kernel $\mathcal{B}_\ve$, and then approximate the orientation distribution function by a
suitable function of its second moment. Then the energy can be entirely determined by
the second moment and thus is reduced to a $\QQ$-tensor type model.
This procedure can be applied not only for the nematics but also other phases for rod-like molecules and even other shapes \cite{xu2014microscopic,xu2018tensor,xu2018Calculating}.

For NLCs, we choose the Onsager's energy functional (\ref{energy:nonlocal-onsager})
with $\mathcal{B}_\ve$ being the excluded volume potential (\ref{potential:nonlocal-Onsager}).
Note that $\mathcal{B}_\ve$ is translational invariant, so we let $B(\xx-\xx',\mm,\mm')=\mathcal{B}_\ve(\xx,\mm; \xx',\mm')$ which is even in $\xx-\xx'$.

Make the Taylor expansion for the {orientational distribution function} $f(\textbf{x}',\textbf{m}')$
with respect to $\textbf{x}'$ at $\textbf{x}$ ($\mathbf{r}=\xx'-\xx$):
\begin{align}
f(\textbf{x}',\textbf{m}')&=f(\xx+\mathbf{r},\mm')\nonumber\\ \label{f-expan}
&=f(\textbf{x},\textbf{m}') + \nabla f(\textbf{x},\textbf{m}')
\cdot\mathbf{r} + \frac12 \nabla^2 f(\textbf{x},\textbf{m}'):\mathbf{r}^T\mathbf{r} + \cdots.
\end{align}
Then the energy can be expanded as:
\begin{align}\label{eq:FreeEnergy-mole-expan}
A[f]=&\int_{\Omega}\int_{\BS}\bigg\{f(\xx,\mm)(\ln{f(\xx,\mm)}-1)+\frac{1}{2}
\int_\BS M^{(0)}(\mm,\mm')f(\xx,\mm')f(\xx,\mm)\ud\mm'\bigg\}\ud\mm\ud\xx\nonumber\\
&\quad+\frac{1}{2}\int_\Omega\int_\BS\int_\BS f(\xx,\mm)M^{(1)}(\mm,\mm')\cdot\nabla f(\xx,\mm')\ud\mm'\ud\mm\ud\xx\nonumber\\
&\quad-\frac{1}{4}\int_\Omega\int_\BS\int_\BS M^{(2)}(\mm,\mm'):\nabla f(\xx,\mm')\nabla f(\xx,\mm)\ud\mm'\ud\mm\ud\xx+\cdots.
\end{align}
where for given kernel function $B(\mathbf{r};\mm,\mm')$,  the moments $M^{(k)}(\mm,\mm')$ $(k=0,1,2,\cdots)$ are defined by:
\begin{align*}
&M^{(0)}(\mm,\mm')=\int B(\mathbf{r},\mm,\mm')\ud\mathbf{r},\\
&M^{(k)}(\mm,\mm')=\int B(\mathbf{r},\mm,\mm')\underbrace{\mathbf{r}\otimes\cdots
\otimes\mathbf{r}}_{k \text{ times}}\ud\mathbf{r}, \qquad (k\ge1).
\end{align*}
These moments depend on the geometric shape of LC molecules. For nematic molecules, they commonly are treated as
 as ellipsoids or spherocylinders. In \citeasnoun{han2015microscopic}, the first three moments are explicitly calculated by considering the molecules as
 spherocylinders with length $L$ and diameter $D$:
\begin{align*}
M^{(0)}(\mm,\mm')&=2L^3\big(\eta|\mm\times\mm'| + \pi \eta^2 + \frac{2}{3}\pi \eta^3\big),\\
M^{(1)}(\mm,\mm')&=0,\\
M^{(2)}(\mm,\mm')&=B_1\II + B_2(\textbf{mm}+\textbf{m}'\textbf{m}') + B_3(\textbf{m}\textbf{m}'+\textbf{m}'\textbf{m})(\mm\cdot\mm'),
\end{align*}
where  $\eta=D/L$ and $B_i$ are functions of $\mm\cdot\mm'$ (see \cite[Appendix]{han2015microscopic} for details):
$$
\left\{
\begin{array}{ll}
B_1(\mm\cdot\mm')&=L^4D\Big(\frac{2|\mm\times\mm'|\eta^2}{3}
+\frac{\pi\eta^3}{2}+\frac{4\pi\eta^4}{15}\Big),\\
B_2(\mm\cdot\mm')&=L^4D\Big(\frac{|\mm\times\mm'|}{6}
+\frac{\pi\eta(1+\eta)}{3}+\frac{\pi\eta^3}{4}+\frac{2\eta^2}{3|\mm\times\mm'|}\Big),\\
B_3(\mm\cdot\mm')&=L^4D\eta^2\Big(\frac{2\arcsin(\mm\cdot\mm')}{3(\mm\cdot\mm')}
-\frac{2}{3|\mm\times\mm'|}\Big).
\end{array}
\right.
$$

The first line in (\ref{eq:FreeEnergy-mole-expan}) is independent of space variation of the probability distribution
function $f$, which gives the {bulk energy} part. The other terms, which depend on space variation of $f$, provide the elastic part of the free energy.

To derive tensor models from  molecular models, we need to use $\QQ(x)$ to express the total energy.
Since it is unrealistic to recover $f$ by finite number of moments, we need to make closure approximation.
We choose the Bingham closure
here, for the reasons that it keeps physical constraints
on the eigenvalues and  preserves the energy structure for dynamics.
We also take the densities variation into consideration, i. e.,
\begin{align}
  f(\xx,\mm)=c(\xx)f_\QQ(\mm),\quad \text{with }f_\QQ \text{ given by (\ref{relation:fQ}}).
\end{align}
The bulk energy is then approximated by
\begin{align}\label{energy:approx:bulk}
F_{\text{bulk}}=&\int_{\Omega}\bigg\{c(\xx)\ln c(\xx)+c(\xx)\int_{\BS}\Big(f_{\QQ}(\xx,\mm)(\ln{f_{\QQ}(\xx,\mm)}-1)\Big)\ud\mm\nonumber\\
&\quad+\frac{c^2(x)}{2}\int_\BS M^{(0)}(\mm,\mm')f_{\QQ}(\xx,\mm')f_{\QQ}(\xx,\mm)\ud\mm'\ud\mm\bigg\}\ud\xx.
\end{align}
Note that the above energy can be viewed as a functional of $c(\xx)$ and $\QQ(\xx)$. If the singular term $|\mm\times\mm'|$ in $M^{(0)}(\mm,\mm')$
is replaced by its smooth alternative $|\mm\times\mm'|^2$, we may arrive
\begin{align*}
F_{\text{bulk}} = \int_{\Omega} c(\xx)\Big(\ln c(\xx)+\QQ(\xx):\BB_\QQ(\xx)-\ln
Z_\QQ(\xx) -\alpha L^3 c(\xx)|\QQ(\xx)|^2\Big)\mathrm{d}\textbf{x},
\end{align*}
where $\alpha$ is a dimensionless constant.

To derive an  elastic energy convenient to use, we consider only finite terms
in  (\ref{eq:FreeEnergy-mole-expan}).
For the nematic phase, it is natural to neglect the terms
whose order of derivatives are greater than one
since the first order derivatives dominates the elastic energy part. If one would like to consider
the smectic phase, it seems enough to keep only the terms
whose order of derivatives are not greater than two.

 Now it is needed
to express
\begin{align}
\int_\BS\int_\BS f_\QQ(\xx,\mm)M^{(2)}(\mm,\mm')
f_\QQ(\xx,\mm')\ud\mm'\ud\mm'
\end{align}
in terms of tensor $\QQ$.
Thus, we have to separate the variables of $\mm$ and $\mm'$ in
$M^{(l)}(\mm,\mm')$. This can not be done precisely in general, since there are some terms like $|\mm\times\mm'|$.
One has to treat them as functions of $\mm\cdot\mm'$ and
use polynomial expansion, such as Taylor expansion or Legendre polynomial expansion,
up to a finite order to approximate them. We skip the details and just present the reduced elastic energy after approximating:
\begin{align}\label{energy:approx:elastic}
F_{\text{elastic}}
=&\frac12\int_\Omega\bigg\{J_1|\nabla{c}|^2+J_2|\nabla(c\QQ)|^2
+J_3|\nabla(c\QQ_4)|^2+J_4\partial_i(cQ_{ij})\partial_jc\nonumber\\
&+J_5\Big(\partial_i(cQ_{ik})\partial_j(cQ_{jk})+\partial_i(cQ_{jk})\partial_j(cQ_{ik})\Big)\nonumber\\
&+J_6\Big(\partial_i(cQ_{4iklm})\partial_j
(cQ_{4jklm})+\partial_i(cQ_{4jklm})\partial_j(cQ_{4iklm})\Big)\nonumber\\
&+J_7\partial_i(cQ_{4ijkl})\partial_j(cQ_{kl})\bigg\}\ud\xx.
\end{align}
Here $\QQ_4=\QQ_4[f_\QQ]$ is defined in (\ref{def:Q4}). The coefficients $J_i(1\le i\le 7)$
depend only on parameters $L, \eta$ and can be explicitly calculated.
Combining (\ref{energy:approx:bulk}) and (\ref{energy:approx:elastic}),
we obtain an energy in tensorial form derived from Onsager's molecular theory:
\begin{align*}
F^{Mol}(c, \QQ, \nabla c, \nabla\QQ)=F_{\text{bulk}}+F_{\text{elastic}}.
\end{align*}
The density variation can be neglected in most cases, especially when defects are absent. Then the energy can be further simplified into a
form which depends only $\QQ$ and $\nabla \QQ$.

If we further assume the uniaxiality of the $\QQ$ tensor, then we obtain a vector model with elastic coefficients given by molecular parameters.
We refer to \citeasnoun{han2015microscopic} for details. The procedure of reduction from microscopic molecular theories to macroscopic continuum theories
can be illustrated in Fig. \ref{fig:static}. The microscopic interpretation of elastic coefficients in the Oseen-Frank theory
has also been studied in \citeasnoun{gelbart1982molecular} and many other related works.

\begin{figure}
    \begin{center}
    \includegraphics[width=0.6\columnwidth]{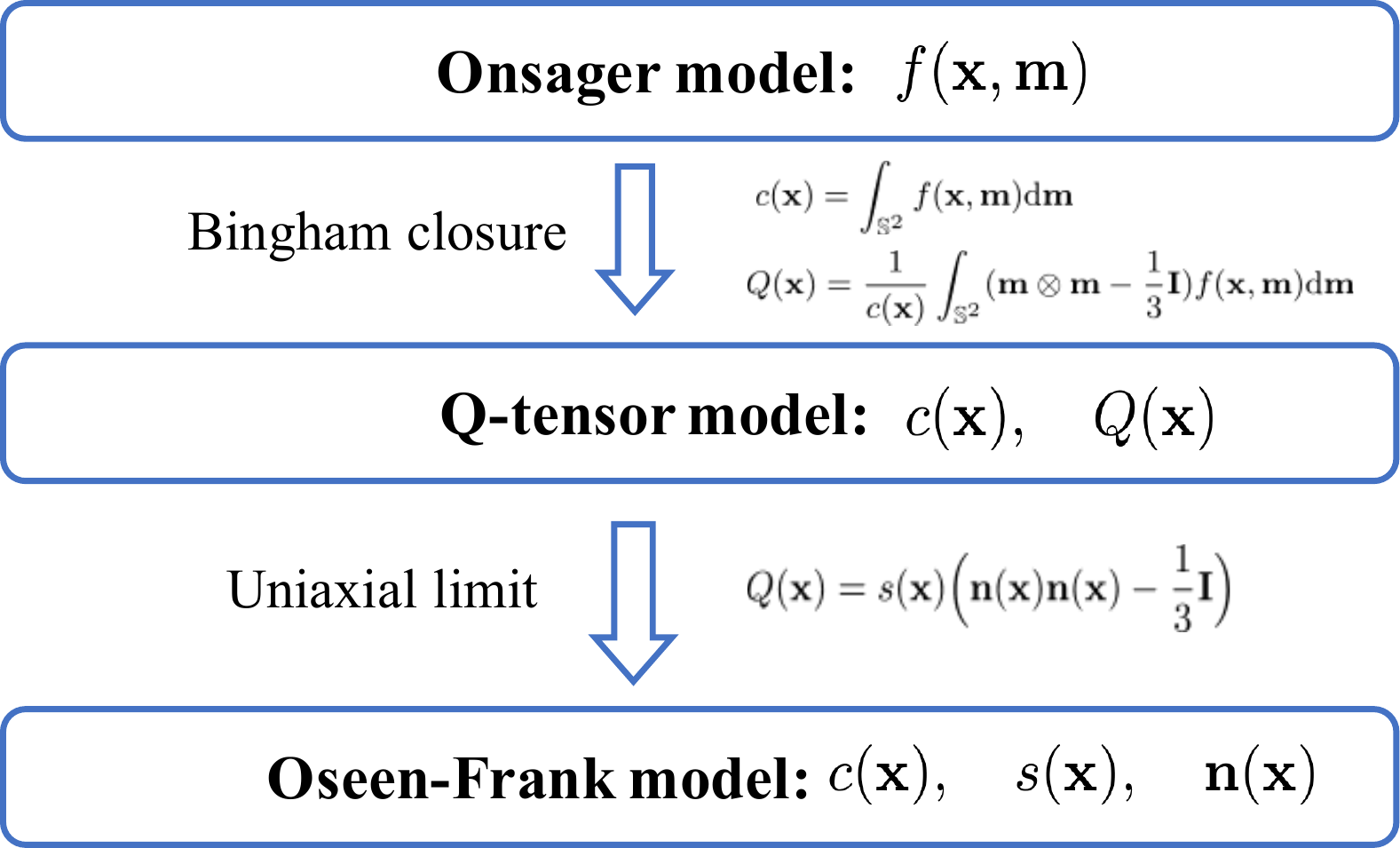}
    \caption{Reduction from microscopic molecular theories to macroscopic continuum theories.}
    \label{fig:static}
    \end{center}
\end{figure}

The similar procedure can be applied to derive the dynamical equation based on the dynamical Doi-Onsager equation.
The main step is to calculate the evolution of $\QQ$ from the evolution equation for $f$.
For simplicity of presentation, we assume the density is constant and take the energy being the simplest form:
\begin{align}\label{energy:nonlocal-Q}
F(\QQ)=\int \QQ(\xx) :\BB_\QQ(\xx)-\ln
Z_\QQ(\xx) -\frac{\alpha}{2} \QQ(\xx):\QQ^\ve(\xx) \ud\xx,
\end{align}
with
$$\QQ^\ve=\int_\BR g_\ve(\xx-\xx')\QQ(\xx')\ud\xx'.$$
This energy can be derived directly from the Onsager energy functional
(\ref{energy:nonlocal-onsager}) with nonlocal Maier-Saupe potential (\ref{potential:nonlocal-MS}).

 As higher order moments will be involved, we also replace
them by the corresponding ones of the approximated Bingham distribution. Namely, we denote
\begin{align*}
M^{(4)}_\QQ=\int_\BS\mm\mm\mm\mm{f}_\QQ\ud\mm,\quad
M^{(6)}_\QQ=\int_\BS\mm\mm\mm\mm\mm\mm{f}_\QQ\ud\mm,
\end{align*}
and
\begin{align*}
\mathcal{M}_{\QQ}(A)&=\frac13A+\QQ\cdot A-A:M_\QQ^{(4)},\\
\mathcal{N}_{\QQ}(A)_{\alpha\beta}&=\partial_i\bigg\{\Big[\gamma_\bot\Big(M^{(4)}_{\QQ\alpha\beta kl}\delta_{ij}
-\frac{\delta_{\alpha\beta}}{3}Q_{kl}\delta_{ij}\Big)
+(\gamma_{\|}-\gamma_\bot)\big(M^{6}_{\alpha\beta klij}
-\frac{\delta_{\alpha\beta}}{3}M^{(4)}_{klij}\big)\Big]\partial_jA_{kl}\bigg\}.
\end{align*}
We also let
\begin{align*}
\mu_\QQ:=\frac{\delta F(\QQ )}{\delta \QQ}=B_\QQ-\alpha \QQ^\ve.
\end{align*}
Noting that
\begin{align*}
3\QQ= 2\alpha \cM_\QQ(B_\QQ),
\end{align*}
we can obtain a closed $\QQ$-tensor system from the Doi-Onsager equation (\ref{eq:LCP-non}):
\begin{eqnarray}\nonumber
\frac{\pa{{\QQ}}}{\pa{t}}+\vv\cdot\nabla{\QQ}&=&\frac{\ve}{De}\mathcal{N}_\QQ(\mu_\QQ)-\frac{2}{De}
\big(\cM_\QQ+\cM_\QQ^T\big)(\mu_\QQ)+\big(\cM_\QQ+\cM_\QQ^T\big)(\kappa^T),\\\label{eq:molecular-Q}
\frac{\pa{\vv}}{\pa{t}}+\vv\cdot\nabla\vv&=&-\nabla{p}
+\frac{\gamma}{Re}\Delta\vv+\frac{1-\gamma}{2Re}\nabla\cdot(\DD:M_\QQ^{(4)})\\
&&\quad+\frac{1-\gamma}{DeRe}\Big(2\nabla\cdot \cM_\QQ(\mu_\QQ)+{\QQ}:\nabla\mu_\QQ\Big).\nonumber
\end{eqnarray}
The system (\ref{eq:molecular-Q}) obeys the following energy dissipation law
\begin{align*}
\begin{aligned}
&\frac{\ud}{\ud{t}}\Big\{\int_\BR\frac{1}{2}|\vv|^2\ud{x}+\frac{1-\gamma}{ReDe}F(Q)\Big\}\\
&=-\frac{1}{Re}\int_\BR\bigg\{\gamma|\DD|^2+\frac{1-\gamma}{2}M_\QQ^{(4)}:(\DD\otimes\DD)\\
&\qquad-\frac{\ve(1-\gamma)}{De^2}\mu_\QQ
:\mathcal{N}\mu_\QQ+\frac{4(1-\gamma)}{De^2}\mu_\QQ:\cM_\QQ\mu_\QQ\bigg\}\ud{x}.
\end{aligned}
\end{align*}
We can also replace the energy (\ref{energy:nonlocal-Q})
in (\ref{eq:molecular-Q})  in by a general local energy $F(\QQ, \nabla \QQ)$ to obtain
a dynamical system in local form. The energy law is kept regardless of the particular choice of $F(\QQ, \nabla \QQ)$.

Since the system (\ref{eq:molecular-Q}) is derived from the Doi-Onsager equation by the Bingham closure,
it keeps many important physical properties:
First, the system preserves the energy structure, which is violated by other closure models;
secondly, the parameters have definite physical meaning rather than be phenomenologically determined;
thirdly, the eigenvalues of $\QQ$ satisfy the physical constrain: $\lambda_i\in\{-1/3,~2/3\}(i=1,2,3)$ if they are satisfied initially;
Moreover, both translational and rotational diffusion can be kept in the formulation, and the translational diffusion can be anisotropic.

Finally, we make some comparisons between the system (\ref{eq:molecular-Q}) with the models of Beris-Edward and Qian-Sheng.
The system (\ref{eq:molecular-Q}) can be similarly written as
(\ref{eq:Q-general-intro})-(\ref{eq:v-general-intro}):
\begin{align}\label{eq:Q-general}
\begin{aligned}
\frac{\pa{{\QQ}}}{\pa{t}}+\vv\cdot\nabla{\QQ}&=D(\mu_\QQ)
+S(\QQ,\DD)
+\BOm\cdot \QQ- \QQ\cdot\BOm,\\
\frac{\pa{\vv}}{\pa{t}}+\vv\cdot\nabla\vv&=-\nabla{p}+\nabla\cdot\big(\sigma^{dis}+\sigma^{s}+\sigma^a+\sigma^{d}\big),
\\
\nabla\cdot\vv&=0.
\end{aligned}
\end{align}
Here
$$D(\mu_\QQ)=\frac{\ve}{De}\mathcal{N}(\mu_\QQ)+\frac{2}{De}(\cM_\QQ+\cM_\QQ^T)(\mu_\QQ),$$
where the first and the second term account for the translational and rotational
diffusion respectively, which is not different with the one in Beris-Edwards's and Qian-Sheng's models.
We remark that $\mu_\QQ:\nabla{\QQ}$ is equivalent to
 $\partial_j\big(\frac{\partial F}{\partial (Q_{kl,j})}Q_{kl,i}\big)$ module a pressure term, and
$$(\cM_\QQ-\cM_\QQ^T)(\mu_\QQ)=\QQ\cdot\mu_\QQ-\mu_\QQ\cdot \QQ,$$
so the stress terms $\sigma^a$ and $\sigma^d$  in our model, module some constants, are the same as in those two models:
\begin{align}
\sigma_{ij}^d=\frac{\partial F(\QQ,\nabla\QQ)}{\partial (Q_{kl,j})}Q_{kl,i},\quad
\sigma^a=\QQ\cdot\mu_\QQ-\mu_\QQ\cdot \QQ.
\end{align}
The dissipation stress is given by
$$\sigma^{dis}=\frac{2\gamma}{Re}\DD+\frac{1-\gamma}{2Re}\DD:M_\QQ^{(4)}.$$
The two conjugated terms $S(\QQ,\DD)$ and $\sigma^s=-\frac{1-\gamma}{Re De}S(\QQ,\mu_\QQ)$ are given by
$$S(\QQ,A)=(\cM_\QQ+\cM_\QQ^T)(A).$$
Note that if we apply Doi's closure
$\langle\mm\mm\mm\mm\rangle_f\sim \langle\mm\mm\rangle_f\langle\mm\mm\rangle_f$ in $M_\QQ^{(4)}$, then
$$(\cM_\QQ+\cM_\QQ^T)(A)\sim (\QQ+\frac13Id)\cdot A+A\cdot(\QQ+\frac13Id)-2(\QQ+\frac13Id)(A:\QQ),$$
which is indeed the corresponding term $S_{BE}(\QQ, A)$ in the Beris-Edwards system with $\xi=1$.

\section{Mathematical analysis for different liquid crystal models}
In this section, we review some analysis results on various LC models.
Since there are numerous progress in this active area, we do not intend to cover all topics but concentrate on the analysis of defects, well-posedness theory of dynamical theories and the connections between them.
Some of them have already been introduced in Section \ref{sec:2}, and will not be  presented again.

\subsection{Analysis on defects}\label{subsec:analysis-defects}
The LdG model and Oseen-Frank model,
have been widely used to study the properties of equilibrium configurations under various conditions.

In vector theories, the simplest configuration contains defect is
\begin{align}\label{config:hedgehog}
\nn(\xx)=\frac{\xx}{|\xx|},
\end{align}
known as the hedgehog. It is not hard to verify that the hedgehog is always a weak solution
of the Euler-Lagrange equation for any choice of  $k_i$-s. 
For the stability, it has been proved that the hedgehog is stable if $8(k_2-k_1)+k_3\ge 0$ \cite{cohen1990weak} and not stable if $8(k_2-k_1)+k_3 < 0$ \cite{helein1987minima}.
One can construct other typical configurations of point defects. For example, in the one-constant case, for any $\mathbf{R}\in O(3)$, $$\nn(\xx)=\mathbf{R}\frac{\xx}{|\xx|}$$ is always a
solution of the Euler-Lagrange equation. Moreover, it is always stable \cite{brezis1986harmonic}.
For the general case, the choice of $\mathbf{R}$ is limited. If
$k_1=k_3$, $\mathbf{R}$ has to be $\pm\mathbf{I}$ or any $180^\circ$ rotation.
When $k_i$ differs with each other, $\mathbf{R}$ must be $\pm\mathbf{I}$.
The corresponding stability is analyzed by \citeasnoun{kinderlehrer1993elementary}.

Moreover, by considering suitable surface energy, the free boundary problems have also been
analytically studied to explore optimal shapes of LC droplets \cite{LinPoon,ShenLiuCal}.
For more results on the analysis of the Oseen-Frank  and related models, we refer to the survey paper \cite{LinLiuSurv}.

The vector theory can provide the macroscopic information of the alignment for defects configuration.
While to study the fine structure of defect cores, one has to use $\QQ$-tensor models.
Given the boundary condition
$\QQ(\xx)=s_+\Big(\nn(\xx)\nn(\xx)-\frac13\II\Big)$ with $\nn(\xx)$ given by (\ref{config:hedgehog}), three kinds of equilibrium solutions
are found numerically \cite{mkaddem2000fine}, which are called radial hedgehog, ring disclination and split core respectively.
These solutions are illustrated in Fig. \ref{ball} (b-d).
Recently, in the low-temperature limit, \cite{yu2020disclinations} proved the existence of axially symmetric solutions describing the ring disclination and split core.

The radial hedgehog solution in the ball $B_R(0)$ can be represented in the form
\begin{align}\label{config:Q-hedgehog}
  \QQ(\xx)=h(|\xx|)\Big(\nn(\xx)\nn(\xx)-\frac13\II\Big),\quad \nn(\xx)=\frac{\xx}{|\xx|},
\end{align}
with $h$ satisfies the ODE
\begin{align}\label{config:heghog-h}
& h''(r)+\frac{2}{r}h'(r)-\frac{6}{r^2}h(r)
=ah(r)-\frac{b}{3}h^2+\frac{2c}{3}h^3,\\
&  h(0)=0,\quad h(R)=s_+.
\end{align}
\citeasnoun{gartland1999instability} proved the instability of  the radial hedgehog solution \eqref{config:Q-hedgehog} when the temperature $a$ is very low and the radii $R$ of the ball is large. {\citeasnoun{majumdar2012radial} proved that if $a$ is closed to zero or $R$ is small, the radial hedgehog solution is locally stable. For the whole space case, i. e., $R=\infty$,
it is shown in \citeasnoun{IgNguSlasZarn} that the radial hedgehog solution is locally stable for $a$ closed to zero and unstable for large $|a|$.
The monotonicity and uniqueness of the solution $h$ to (\ref{config:heghog-h}) is studied in \cite{Lamy}.}

As defects in $\QQ$-tensor theory are not the singularities of order tensor $\QQ$ but the
regions with rapid changes of $\QQ$, one may study the uniaxial limit of LdG model to analyze the properties of defect sets.
Most of these results concentrate in the case $L_2=L_3=L_4=0, L_1=L$.

Consider the strong anchoring boundary condition
\begin{align}\label{bc:ldg}
\QQ|_{\partial\Omega}=s_+\Big(\nn_b(\xx)\nn_b(\xx)-\frac13\II\Big),\quad \nn_b\in C^\infty(\partial\Omega, \mathbb{S}^2).
\end{align}
{Let $\QQ^{L}$ be the global minimizers of the Landau-de Gennes energy
\begin{align*}
  F_{L}(\QQ,\nabla\QQ)=\int_\Omega \frac{L}{2}|\nabla\QQ|^2 +\frac{a}{2}|\QQ|^2-\frac{b}{3}\tr \QQ^3+\frac{c}{4}|\QQ|^4 \ud\xx,
\end{align*}
with boundary condition  (\ref{bc:ldg}). 
\citeasnoun{majumdar2010landau} proved the following results.}
{\begin{theorem}
For a sequence $L_k\to 0$,  the minimizers $\QQ^{L_k}\to \QQ_*$ in the Sobolev space $H^1(\Omega)$, where $\QQ_*\in H^1(\Omega, \mathcal{M})$ is the minimizer of
$$\int_\Omega|\nabla\QQ|^2\ud\xx,\quad \text{with } \QQ\in\mathcal{M}\text{ and satisfying (\ref{bc:ldg})}.$$ 
In addition, the convergence is uniform away from the (possible) singularities of $\QQ_*$.
\end{theorem}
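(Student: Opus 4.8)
The plan is to run a $\Gamma$-convergence-type compactness argument, the only genuinely delicate ingredient being the uniform convergence away from the singular set. First I would normalize $F_b$ so that $\min_\QQ F_b=0$ (subtract the constant value of $F_b$ on $\mathcal M$) and use the limiting minimizer $\QQ_*\in H^1(\Omega,\mathcal M)$ itself as a competitor for $F_{L_k}$: since $\QQ_*$ has the prescribed boundary trace \eqref{bc:ldg} and is $\mathcal M$-valued,
\[
\frac{L_k}{2}\int_\Omega|\nabla\QQ^{L_k}|^2\,\ud\xx+\int_\Omega F_b(\QQ^{L_k})\,\ud\xx=F_{L_k}(\QQ^{L_k})\le F_{L_k}(\QQ_*)=\frac{L_k}{2}\int_\Omega|\nabla\QQ_*|^2\,\ud\xx .
\]
Thus $\int_\Omega|\nabla\QQ^{L_k}|^2$ stays bounded and $\int_\Omega F_b(\QQ^{L_k})\to0$. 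Combined with the uniform $L^\infty$ bound on $\QQ^{L_k}$ (maximum principle for the Euler-Lagrange system, using coercivity of $F_b$) this gives an $H^1$ bound, so along a subsequence $\QQ^{L_k}\rightharpoonup\QQ_0$ weakly in $H^1$, strongly in $L^2$ and a.e.; Fatou's lemma forces $\int_\Omega F_b(\QQ_0)=0$, and since $F_b\ge0$ vanishes exactly on $\mathcal M$ we get $\QQ_0\in H^1(\Omega,\mathcal M)$, with $\QQ_0$ still satisfying \eqref{bc:ldg} by weak continuity of the trace.

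Next I would identify $\QQ_0$ with $\QQ_*$ and upgrade the convergence. Weak lower semicontinuity of the Dirichlet integral and the bound above give $\int_\Omega|\nabla\QQ_0|^2\le\liminf_k\int_\Omega|\nabla\QQ^{L_k}|^2\le\int_\Omega|\nabla\QQ_*|^2$; but $\QQ_0$ is admissible for the constrained problem that $\QQ_*$ minimizes, so all these quantities coincide and $\QQ_0$ is itself a minimizer. Assuming the constrained minimizer is unique (otherwise one argues along subsequences, each limit being some minimizer), $\QQ_0=\QQ_*$; moreover $\int_\Omega|\nabla\QQ^{L_k}|^2\to\int_\Omega|\nabla\QQ_*|^2$, and convergence of norms together with weak convergence yields $\nabla\QQ^{L_k}\to\nabla\QQ_*$ strongly in $L^2$, hence $\QQ^{L_k}\to\QQ_*$ strongly in $H^1(\Omega)$.

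For the uniform convergence I would first invoke the regularity theory for minimizing harmonic maps: $\QQ_*$ is a minimizing harmonic map from $\Omega\subset\mathbb R^3$ into $\mathcal M\cong\mathbb{RP}^2$, so by Schoen-Uhlenbeck with dimension reduction its singular set $Z$ is discrete and $\QQ_*$ is analytic on $\Omega\setminus Z$. Then, on a compact $K\Subset\Omega\setminus Z$, I would use a small-energy-regularity (``clearing-out'') estimate for the Euler-Lagrange system $L_k\Delta\QQ^{L_k}=\partial_\QQ F_b(\QQ^{L_k})+\lambda_k\II$, the term $\lambda_k\II$ being the multiplier enforcing tracelessness: rescaling by $\sqrt{L_k}$ turns it into a unit-scale Ginzburg-Landau system, and one shows that the scale-invariant renormalized energy $r^{-1}\big(\int_{B_r}|\nabla\QQ^{L_k}|^2+L_k^{-1}\int_{B_r}F_b(\QQ^{L_k})\big)$ stays below the universal threshold on balls centered in $K$ of a radius uniform over $K$, for all large $k$. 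Controlling the $L_k^{-1}\int_{B_r}F_b$ piece is done through the monotonicity formula for the rescaled system together with the strong $H^1$ convergence, which pins the potential energy near regular points of the limit. The $\varepsilon$-regularity estimate then gives uniform $C^{1,\alpha}$ bounds on a fixed-size subcover, and Arzel\`a-Ascoli plus the a.e.\ convergence identify the limit, so $\QQ^{L_k}\to\QQ_*$ uniformly on $K$.

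The soft part — energy bound, compactness, identification of the target, minimality — is routine. The technical heart, and the step I expect to be the main obstacle, is the last one: establishing the small-energy interior regularity for the constrained Ginzburg-Landau system with the $L_k$-scaling made explicit, and, above all, proving the local control of $L_k^{-1}\int_{B_r}F_b(\QQ^{L_k})$ near regular points so that the smallness hypothesis actually holds (the global bound from the first step only gives $O(1/r)$ there). This is exactly where the assumption $L_2=L_3=L_4=0$ is genuinely used: it makes the elastic term equal to $\tfrac{L}{2}|\nabla\QQ|^2$, so the principal part is $-L\Delta$ and the Bethuel-Brezis-H\'elein monotonicity/regularity machinery applies essentially verbatim.
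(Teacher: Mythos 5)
The paper itself gives no proof of this theorem---it is quoted from the cited work of Majumdar and Zarnescu---and your outline reproduces essentially that proof's route: comparison with $\QQ_*$ as competitor, uniform $L^\infty$ and $H^1$ bounds, weak compactness and lower semicontinuity to identify the limit (up to a subsequence, as you rightly note) with a minimizing harmonic map into $\mathcal{M}$, norm convergence upgrading to strong $H^1$ convergence, and then Schoen--Uhlenbeck regularity together with the monotonicity formula and a Bethuel--Brezis--H\'elein-type $\varepsilon$-regularity argument for the locally uniform convergence away from the singular set. The one step you flag as the main obstacle is in fact already settled by your own first two steps: the comparison inequality gives $L_k^{-1}\int_\Omega F_b(\QQ^{L_k})\,\ud\xx \le \tfrac12\bigl(\int_\Omega|\nabla\QQ_*|^2\,\ud\xx-\int_\Omega|\nabla\QQ^{L_k}|^2\,\ud\xx\bigr)$, and since the Dirichlet energies converge this normalized bulk energy tends to zero globally, so on each fixed ball centred in a compact set away from the singularities the scale-invariant bulk contribution is $o(1)/r$ rather than $O(1)/r$, and the smallness hypothesis needed for the $\varepsilon$-regularity follows from the smoothness of $\QQ_*$ there.
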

Moreover, away from the singular points of $\QQ_*$, the convergence is further refined by \citeasnoun{nguyen2013refined}.
Note that the fact the limit map $\QQ_*\in H^1(\Omega)$ implies the line defects are excluded in this case.
}
%

\citeasnoun{bauman2012analysis} investigated the uniaxial limit of the Landau-de
Gennes energy  with $L_2, L_3\neq 0$  on a 2D bounded domain.
By assuming that $\mathbf{e}_3$  is always an eigenvector of $\QQ$, they proved that, if the boundary data has nonzero degree,
then there will be a finite number of defects of degree $\pm1/2$. This problem models  the behavior of a thin LC material
with its top and bottom surfaces constrained to having a principal axis $\mathbf{e}_3$, and the limiting  defects correspond to vertical
disclination lines at those locations.
The line defects in the full three-dimensional domain is investigated in \citeasnoun{canevari2017line}, where it is proved that
the  minimizers $\QQ^{(L)}$ converge to a limit map with straight line segments singularities, by assuming the logarithemic bound of the energy:
\begin{align*}
  F\le CL|\ln L|.
\end{align*}
Some related results are also given in \cite{GolovatyMont,CaneESAIM}.

\subsection{The relation between the nonlocal Onsager energy and the Oseen-Frank energy}

%



The orientation distribution function $f$ contains detail configurational information of molecules. However, it is difficult to apply especially
for studying the macroscopic behavior or configurations, as it often leads to very high computational costs.
Thus the $\QQ$-tensor theory or Ericksen-Leslie theory are used more often in analysis and computational simulations.
Then it raises a natural question: are these models consistent to each other? This issue is fundamental but highly non-trivial in the analysis aspect.


We briefly show how to formally derive it.
We expand  the mean-field potential as:
\begin{equation} \CU_\ve
f=U_0[f]+\ve{U_1[f]}+\ve^2U_2[f]+\cdots,
\end{equation}
where $ U_0[f]=\CU f$ and for $k\ge 1$
\begin{eqnarray*}
U_k[f](\xx,\mm,t)=\frac{1}{(2k)!}\int_\BR\int_\BS\alpha|\mm\times\mm'|^2g(\yy)
(\yy\cdot\nabla)^{2k}f(\xx,\mm',t)\ud\mm'\ud\yy.
\end{eqnarray*}

Direct computation shows that
\begin{eqnarray*}
\Ue[f]-U[f]&=&\int_\BR\int_\BS\alpha|\mm\times\mm'|^2g_\ve(\xx-\xx')
(f(\xx',\mm',t)-f(\xx,\mm',t))\ud\mm'\ud\xx'\\
&=&\int_\BR\int_\BS\alpha|\mm\times\mm'|^2g(\yy)
(f(\xx+\sqe\yy,\mm',t)-f(\xx,\mm',t))\ud\mm'\ud\yy\\
&=&\int_\BR\int_\BS\alpha|\mm\times\mm'|^2g(\yy)
\Big(\sum_{k\ge1}\frac{\ve^{\frac{k}{2}}}{k!}(\yy\cdot\nabla)^kf(\xx,\mm',t)\Big)\ud\mm'\ud\yy\\
&=&\int_\BR\int_\BS\alpha|\mm\times\mm'|^2g(\yy)
\Big(\sum_{k\ge1}\frac{\ve^{{k}}}{(2k)!}(\yy\cdot\nabla)^{2k}f(\xx,\mm',t)\Big)\ud\mm'\ud\yy.
\end{eqnarray*}
We let ($U_0[f]=U[f]$):
\begin{eqnarray*}
U_k[f](\xx,\mm,t)&=&\frac{1}{(2k)!}\int_\BR\int_\BS\alpha|\mm\times\mm'|^2g(\yy)
(\yy\cdot\nabla)^{2k}f(\xx,\mm',t)\ud\mm'\ud\yy.
\end{eqnarray*}
Denote $\MM[f]=\int_\BS{{m_im_j}f(\xx,\mm)\ud\mm}$,
$G_{kl}=\int{g(\yy)y_ky_l}\ud{\yy}$. Then
\begin{eqnarray}\label{eq:U1f0}
U_1[f](\xx,\mm)&=&-\frac{\alpha}{2}G_{kl}\mm\mm:\partial_k\partial_l\MM[f],
\end{eqnarray}
Then the energy can be expanded as
\begin{align}\label{energy:onsager-expand}
A_\ve[f]=&\int_{\Omega}\int_{\BS}f(\xx,\mm)(\ln{f(\xx,\mm)}-1)+\frac{1}{2}f(\xx,\mm)U_0[f](\xx,\mm)\ud\mm\ud\xx\nonumber \\
&+\frac{\ve\alpha}{4} \int_\Omega G_{kl}\partial_k\MM[f]:\partial_l\MM[f] \ud\xx+O(\ve^2).
\end{align}
The leading order term is a local energy which can be written as
\begin{align}
\int_\Omega A_0[f(\xx,\cdot)]\ud\xx,
\end{align}
with
\begin{align}
A_0[f]=\int_{\BS}f(\mm)(\ln{f(\mm)}-1)+\frac{1}{2}f(\mm)U_0[f]\ud\mm.
\end{align}
Theorem \ref{thm:critical point} informs us that the global minimizers of the above energy are given by
\begin{align}\label{express:f0}
f_0(\mm)=h_{\eta, \nn}(\mm):=\frac1{Z}{e^{\eta (\mm\cdot\nn)^2}}, \quad Z =\int_\BS e^{\eta (\mm\cdot\nn)^2}\ud\mm,
\end{align}
where $\nn\in\BS$ is an arbitrary unit vector, and $\eta$ satisfies the relation:
\begin{align*}
  \eta=0,\quad \text{or} \quad \frac{\int_0^1e ^{\eta z^2}dz}{\int_0^1z^2(1-z^2)e ^{\eta z^2}dz}=\alpha.
\end{align*}
Substituting (\ref{express:f0}) into (\ref{energy:onsager-expand}), we can derive that for minimizers, the energy can be written as
\begin{align}\label{energy:Oseen-Frank-reduced}
A_\ve =\text{const} + \ve\frac{1}{2} G\alpha S_2^2 |\nabla\nn(\xx)|^2 +O(\ve^2)
\end{align}
with $S_2=\frac{\int_0^1(3z^2-1) e ^{\eta z^2}dz}{2\int_0^1e ^{\eta z^2}dz}$, and
$G=\frac{1}{3}\int{g(\yy)|\yy|^2}\ud\yy$, in which the leading nontrivial term is the one-constant
form of the Oseen-Frank energy. One may also derive the general Oseen-Frank energy for more
complicated (and realistic) interaction kernerl $\mathcal{B}_\ve(\xx; \mm, \mm')$. We refer to \citeasnoun{EZhang2006} for details.
In \citeasnoun{LiuWangARMA}, it is proved that the minimizers or critical points of the Onsager functional
converges to minimizers or critical points of the one-constant Oseen-Frank energy. In \citeasnoun{taylor2018oseen},
the $\Gamma$-convergence from the Onsager functional to a two-constant Oseen-Frank energy has been proved.


\subsection{Dynamics: analysis on various models}
The analysis on the models for LC flow is a hot topic in the analysis and PDE community during the past decades, not only because
these dynamical models provide typical and concrete examples of the hydrodynamical model for the general complex fluids, but also due to the
beautiful and complicate structures in these models and the corresponding deep challenges.
As the results on the wellposedness and long time behavior of dynamical LC models are numerous,
we only mention some of them here, which are far from complete.

The long-time behavior of the Smoluchowski equations without the hydrodynamics was studied by \citeasnoun{vukadinovic2009inertial}. For the full Doi-Onsager  hydrodynamical model, the local well-posedness of strong solutions is
established by \citeasnoun{ZhangZhang-SIAM}. However, the global existence of weak solutions remains open.
For recent work on the existence and properties of solutions to the dynamical $\QQ$-tensor models, we refer to some recent papers \cite{paicu2012energy,abels2014well,HuangDing,Wilk,liu2018initial} and the references therein.

The analysis of Ericksen-Leslie model is initiated by \citeasnoun{lin1991nematic}, in which a simplified Ericksen-Leslie
system (without Leslie's stress) is presented. Moreover, Lin-Liu proposed a regularized model based on the Ginzburg-Landau
approximation and proved the long-time asymptotic, existence and partial regularity \cite{lin1995nonparabolic,lin1996partial}. There are
a lot of works devoting to the existence of global weak solution to the simplified or more general Ericksen-Leslie system
in both $\mathbb{R}^2$  \cite{lin2010liquid,hong2011global,hong2012global,huang2014regularity}
and $\mathbb{R}^3$ \cite{lin2016global}. The local wellposedness of strong
solutions for the general Ericksen-Leslie system is proved in \citeasnoun{WangZZ2013ARMA} for whole space case and in \citeasnoun{HieberNPS}
on bounded domain with the Neumann boundary condition. Recently, the local wellposedness for the  Ericksen-Leslie system
which includes the inertial term was considered \cite{JiangLuo,CaiWang}.

Another important issue in the analysis part is the generation or movement of singularities of solutions to the Ericksen-Leslie equation, which characterizes the dynamical behavior of defects in LC flow.
For the simplified equation, \citeasnoun{huang2016finite} constructed  solutions in a 3-D bounded domain with Dirichlet boundary data  where the direction field blows up at finite time while the velocity field remains smooth. \citeasnoun{lai2019finite} proves that, for any given set of points in $\mathbb{R}^2$, one can construct solutions with smooth initial data which
blow up exactly at these points in a small time.

The above list of analysis results on  the Ericksen-Leslie system is far from complete.
For more analysis results on the Ericksen-Leslie system or its variant versions, we refer to the survey papers \cite{lin2014recent,hieber2016modeling} and the references therein.


\subsection{Dynamics: from Doi-Onsager to Ericksen-Leslie}\label{sec:Doi-EL}

The formal derivation of the Ericksen-Leslie equation from Doi's kinetic theory was first studied by  \citeasnoun{kuzuu1983constitutive}.
However, the Ericksen stress is missed since only the homogeneous case is considered.
This derivation was extended to the inhomogeneous case by  \citeasnoun{EZhang2006}, in which they found
that the Ericksen stress can be recovered from the body force which comes from the inhomogeneity of the chemical potential, see (\ref{eq:LCP-non}).

The derivations in \citeasnoun{kuzuu1983constitutive} and \citeasnoun{EZhang2006} are based on the Hilbert expansion
(also called the Chapman-Enskog expansion) of solutions  with respect to the small parameter
$\ve$:
\begin{align}\label{expan:fv}
\begin{aligned}
  f(t,\xx,\mm)&=f_0(t, \xx, \mm )+\ve f_1(t, \xx, \mm )+\ve^2 f_2(t, \xx, \mm )+\cdots,\\
  \vv(t,\xx)&=\vv_0(t, \xx )+\ve \vv_1(t, \xx )+\ve^2 \vv_2(t, \xx )+\cdots.
\end{aligned}
\end{align}
Substituting the above expansion to the system (\ref{eq:LCP-non}) and collecting the terms with the same order of $\ve$,
we can obtain a series of equations for $(f_0, \vv_0; f_1, \vv_1;\cdots)$.

The $O(\ve^{-1})$ equation gives that $f_0$ satisfies:
\begin{align*}
 \CR\cdot(\CR{f_0}+f_0\CR\CU f_0)=0,
\end{align*}
which means $f_0$ is a critical point of $A_0[f]$. So by Theorem \ref{thm:critical point}, we can let
\begin{align*}
  f_0(t,\mm,\xx)=h_{\eta, \nn(t,\xx)}(\mm), \quad \text{for some }\nn(t,\xx)\in \mathbb{S}^2.
\end{align*}
For the terms of order $O(\ve^0)$, it holds that
\begin{align}
\frac{\pa{f_0}}{\pa{t}}+\vv_0\cdot\nabla{f_0}=&~\CG_{f_0}f_1
+\CR\cdot(f_0\CR{U_1}f_0)-\CR\cdot(\mm\times(\nabla\vv_0)^T\cdot\mm{f_0}),\label{eq:Hilbert-L-10}\\
\frac{\pa{\vv_0}}{\pa{t}}+\vv_0\cdot\nabla\vv_0=&-\nabla{p_0}+\frac{\gamma}{Re}\Delta\vv_0
+\frac{1-\gamma}{2Re}\nabla\cdot(\DD_0:\langle\mm\mm\mm\mm\rangle_{f_0})\nonumber\\
&-\frac{1-\gamma}{Re}\Big\{\nabla\cdot\big\langle\mm\mm\times(\CR{f_1}
+\sum_{\scriptscriptstyle{i+j+k=1}}f_i{U}_jf_k)\big\rangle_1+\langle\nabla{U}_1f_0\rangle_{f_0}\Big\}.\label{eq:Hilbert-L-11}
\end{align}
Although the above system involves the next order term $f_1$ which is not known so far, it is a closed evolution system
for the direction field $\nn(t,\xx)$ and the velocity  $\vv_0(t,\xx)$. 
%
Indeed, it is equivalent to the Ericksen-Leslie system
for $(\nn(t,\xx), \vv_0(t,\xx))$ with coefficients determined by the parameters in the Doi-Onsager equation, 
which will be shown in Theorem \ref{thm:Hilbert-11} and \ref{thm:Hilbert-12} in the next subsections.

\subsubsection{Analysis of the linearized operator}
Analysis of the linearized operator
\begin{eqnarray}
\CG_hf\eqdefa \CR\cdot\big(\CR{f}+h\CR\CU{f}+f\CR\CU{h}\big).
\end{eqnarray}
around a critical point $h$ plays important roles in the reduction of the system (\ref{eq:Hilbert-L-10})-(\ref{eq:Hilbert-L-11}).
In \citeasnoun{kuzuu1983constitutive}, some properties of the null space and spectrums are assumed or presented. With the help of
 complete classifications of critical points of the Maier-Saupe energy, these properties can be rigorously proved \cite{WangZZCPAM}.

We introduce two operators $\CA_h$ and $\CH_h$ which are defined by
\begin{align*}
\CA_h{f}\eqdefa -\CR\cdot(h\CR f)+\CU{f},\quad \CH_h{f}\eqdefa\frac{f}{h}+\CU{f}.
\end{align*}
There holds the following
important relation:
\begin{align}\label{eq:relation}
 \CG_h f=-\CA_h\CH_h f.
\end{align}
Then  the null spaces of $\CG_h$, $\CH_h$ and $\CG_h^*$ (the conjugate of $\CG_h$) can be characterized by the following theorem.
\begin{Proposition}\label{prop:G-kernel}
Let $h_i=h_{\eta_i,\nn}, i=1,2$. For $\al>\alpha^*$, it holds that
\begin{itemize}

\item[1.] $\CG_{h_1}$ and $\CG_{h_2}$ with $\eta_2=0$ have no positive eigenvalues, whereas $\CG_{h_2}$ has at least one positive eigenvalue for $\eta_2\neq0$
;

\item[2.] If $\phi\in \mathrm{Ker}\CG_{h_1}$, then $\CH_{h_1}\phi=0$;

\item[3.] $\mathrm{Ker}\CG_{h_1}=\big\{\mathbf{{\Theta}}\cdot
\CR{h_1}; \mathbf{{\Theta}}\in \mathbb{R}^3\big\}$ is a 2D
space;

\item[4.]$\mathrm{Ker}\CG_{h_1}^*=\big\{\CA^{-1}_{h_1}(\mathbf{{\Theta}}\cdot
\CR{h_1}); \mathbf{{\Theta}}\in \mathbb{R}^3\big\}$.

\end{itemize}

\end{Proposition}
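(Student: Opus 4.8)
The plan is to reduce all four assertions to the spectral theory of the two \emph{self-adjoint} operators in the factorization $\CG_h=-\CA_h\CH_h$ of \eqref{eq:relation}, using that $\CA_h$ is positive and invertible and that $\CH_h$ is the second variation of the Maier--Saupe free energy at the critical point $h$. The algebraic skeleton is this: both $\CA_h$ and $\CH_h$ are self-adjoint on $L^2(\BS)$ --- $\CH_h g=g/h+\CU g$ is bounded, since $h=h_{\eta,\nn}$ is pinched between positive constants and $\CU$ is bounded, while $\CA_h$ is a second-order elliptic operator which, on the relevant space of (mean-zero) perturbations, is positive and invertible; this coercivity, whose delicate point is that the negative part of the indefinite form $\langle\CU f,f\rangle$ must be dominated by $\int_\BS h|\CR f|^2\,\ud\mm$, is the key analytic input and I would quote it from \cite{WangZZCPAM}. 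Granting it, the eigenvalue problem $\CG_h f=\mu f$ is equivalent to $\CH_h f=-\mu\,\CA_h^{-1}f$ with $\CA_h^{-1}>0$; hence $\CG_h$ has real spectrum, by min--max it has a positive eigenvalue if and only if $\CH_h$ has a direction on which it is negative, and $\mathrm{Ker}\,\CG_h=\mathrm{Ker}\,\CH_h$ since $\CA_h$ is injective. Taking adjoints, $\CG_h^*=-\CH_h\CA_h$, so $\mathrm{Ker}\,\CG_h^*=\CA_h^{-1}\bigl(\mathrm{Ker}\,\CH_h\bigr)$. Assertion~2 is now immediate, and assertions 1, 3, 4 are reduced to an analysis of $\CH_{h_i}$.

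For assertion~1, with $A_0[f]=\int_\BS f(\ln f-1)+\tfrac12 f\CU f$ a direct computation gives $\langle\CH_h g,g\rangle=A_0''[h](g,g)$, so negative directions of $\CH_{h_i}$ are exactly instability directions of $h_i$ as a critical point of $A_0$. Proposition~\ref{prop:energy stability} then yields: for $\al>\alpha^*$ the prolate state $h_1$ is stable, hence $\CH_{h_1}\ge0$ and $\CG_{h_1}$ has no positive eigenvalue; the case $\eta_2=0$ is, by Theorem~\ref{thm:critical point}, precisely the isotropic state at $\al=7.5$, which is marginally stable, so again no positive eigenvalue; and when $\eta_2\ne0$ the oblate state $h_2$ is unstable, so $\CH_{h_2}$ carries a negative direction and $\CG_{h_2}$ has at least one positive eigenvalue.

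For assertion~3 (and then 4), that $\boldsymbol{\Theta}\cdot\CR h_1$ lies in $\mathrm{Ker}\,\CH_{h_1}$ is infinitesimal rotational invariance: the kernel $\al|\mm\times\mm'|^2$ of $\CU$ is invariant under simultaneous rotations, so each rotation generator $\ee\cdot\CR$ commutes with $\CU$, and differentiating the Euler--Lagrange identity $\ln h_1+\CU h_1=\mathrm{const}$ along $\ee\cdot\CR$ gives $\CH_{h_1}(\ee\cdot\CR h_1)=\ee\cdot\CR(\ln h_1+\CU h_1)=0$. Since $\CR h_{\eta_1,\nn}=2\eta_1 h_1(\mm\cdot\nn)(\mm\times\nn)$ and $\eta_1>\eta^*>0$, the $\nn$-component vanishes and $\{\boldsymbol{\Theta}\cdot\CR h_1\}$ is exactly two-dimensional. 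For the reverse inclusion I would exploit that, for the Maier--Saupe kernel, $\CU f$ depends on $f$ only through $\int_\BS f$ and $\int_\BS(\mm\mm-\tfrac13\II)f$; hence $\CH_{h_1}f=0$ forces $f=-h_1\CU f$ to be $h_1$ times a quadratic polynomial in $\mm$, and the self-consistency for the moments $\int_\BS f$ and $\int_\BS(\mm\mm-\tfrac13\II)f$ collapses to a finite linear system. Splitting that six-dimensional system by the residual $O(2)$-symmetry about $\nn$ produces a uniaxial block, a rotational block (carrying the two-dimensional kernel found above), and a biaxial block; the first and third are shown non-degenerate by explicit evaluation of ratios of $\int_0^1 z^{2k}e^{\eta_1 z^2}\,\ud z$, the uniaxial block being invertible precisely because $\eta_1$ lies on the strictly increasing branch of $\eta\mapsto\alpha(\eta)$ in \eqref{eta-alpha}. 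This pins down $\mathrm{Ker}\,\CH_{h_1}=\{\boldsymbol{\Theta}\cdot\CR h_1\}$, hence $\mathrm{Ker}\,\CG_{h_1}$ by the skeleton, and assertion~4 follows at once from $\mathrm{Ker}\,\CG_{h_1}^*=\CA_{h_1}^{-1}\bigl(\mathrm{Ker}\,\CH_{h_1}\bigr)$.

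The two genuinely hard points are the coercivity of $\CA_h$ in the skeleton and the non-degeneracy of the uniaxial and biaxial blocks in assertion~3; the latter is where the full classification of Theorem~\ref{thm:critical point}, in particular the strict monotonicity of $\eta_1(\alpha)$, is really used. Everything else is soft operator theory together with symmetry bookkeeping.
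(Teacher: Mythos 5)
Your overall route --- exploit the factorization $\CG_h=-\CA_h\CH_h$, identify $\langle\CH_h f,f\rangle$ with the second variation of the Maier--Saupe energy to get assertion~1, obtain the rotational modes $\mathbf{\Theta}\cdot\CR h_1$ from symmetry and close the kernel computation by a finite-dimensional moment self-consistency argument using the classification in Theorem~\ref{thm:critical point} --- is indeed the standard one (the survey itself gives no proof and defers to \citeasnoun{WangZZCPAM}, whose argument has this shape). But your skeleton rests on a property of $\CA_h$ that is either unnecessary or false, and this leaves a real gap. The operator for which the identity $\CG_h=-\CA_h\CH_h$ actually holds is $\CA_h f=-\CR\cdot(h\CR f)$ (the ``$+\,\CU f$'' in the survey's display is inconsistent with \eqref{eq:relation} and with the sentence after the proposition, which characterizes $\mathrm{Ker}\,\CG^*_{h}$ by $-\CR\cdot(h\CR\psi_0)=\Theta\cdot\CR h$). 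For that operator, positivity is trivial ($\langle\CA_h f,f\rangle=\int_\BS h|\CR f|^2$) and no domination of $\langle\CU f,f\rangle$ is needed; for the operator including $\CU$, the coercivity you propose to quote is simply false in the relevant regime: for $f=\mm\mm:B$ with $B$ trace-free one has $\int_\BS h|\CR f|^2\le 4|B|^2$ for every probability density $h$, while $\langle\CU f,f\rangle=-\tfrac{64\pi^2}{225}\,\al\,|B|^2\approx-2.8\,\al\,|B|^2$, which already wins at $\al=\al^*\approx6.73$. So the ``key analytic input'' you defer to the reference does not exist there and cannot be used.

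The concrete casualty is your claim ``$\mathrm{Ker}\,\CG_h=\mathrm{Ker}\,\CH_h$ since $\CA_h$ is injective'', which is exactly assertion~2 and also fixes the dimension in assertion~3. With the correct $\CA_h$, constants lie in its kernel, so $\CG_{h_1}\phi=0$ only yields $\CH_{h_1}\phi=\mathrm{const}$. This is not cosmetic: $\CH_{h_1}$ is multiplication by $1/h_1$ plus a finite-rank operator, hence Fredholm and self-adjoint, and since the constants are orthogonal to the rotational modes, the equation $\CH_{h_1}\phi=c$ is solvable for every $c$ on all of $L^2(\BS)$; the proposition is therefore only true on the mass-conserving (mean-zero) subspace, and even there one must show the constant vanishes --- e.g.\ by including $\int_\BS\phi$ and the constant $c$ in your six-dimensional self-consistency system, or by pairing with the solution of $\CH_{h_1}\phi_1=1$ and using positivity of $\CH_{h_1}$. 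Your downstream steps are fine once this is repaired: the second-variation argument for assertion~1 (with the marginal case $\eta_2=0$, i.e.\ $\al=7.5$, checked by the explicit degree-two computation rather than by Proposition~\ref{prop:energy stability}, which only covers $\al<7.5$), the rotational kernel and the non-degeneracy of the uniaxial block via strict monotonicity of $\eta\mapsto\al(\eta)$ on the $\eta_1$ branch, and $\mathrm{Ker}\,\CG_{h_1}^*=\CA_{h_1}^{-1}(\mathrm{Ker}\,\CH_{h_1})$ from $\CG_{h_1}^*=-\CH_{h_1}\CA_{h_1}$. But as written, assertion~2 and the exact two-dimensionality in assertion~3 are assumed rather than proved.
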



\subsubsection{Derivation of the angular momentum equation}
From the equation (\ref{eq:Hilbert-L-10}), we have that
\begin{align}\label{eq:evolu-f0}
\Big\langle\frac{\pa{f_0}}{\pa{t}}+\vv_0\cdot\nabla{f_0}+\CR\cdot(\mm\times(\nabla\vv_0)^T\cdot\mm{f_0})
-\CR\cdot(f_0\CR{U}_1f_0),
\psi\Big\rangle_{L^2(\BS)}=0,
\end{align}
for any $\psi\in \mathrm{Ker}\CG^*_{f_0}$. This equation provides the evolution equation for the vector field
$\nn(t,\xx)$. Actually, the following lemma tells us that $\nn(t,\xx)$ satisfies the evolution equation in the Ericksen-Leslie equation
with some constants $\lambda_1, \lambda_2$.

As $\mathrm{Ker}~\CG_\nn^*=\CA_\nn\mathrm{Ker}~\CG_\nn$,
$\psi_0\in\mathrm{Ker}~\CG_\nn^*$ if and only if there is a vector $\mathbf{\Theta}$ such that
\begin{equation}
-\CR\cdot(h_\nn\CR{\psi_0})=\Theta\cdot\CR{h_\nn}.
\end{equation}

\begin{theorem}\label{thm:Hilbert-11}
The equation (\ref{eq:evolu-f0}) holds if and only if $\nn(\xx, t)$ is a solution of
\begin{align}
\nn\times\big(\gamma_1(\partial_t\nn+\vv\cdot\nabla\nn+\BOm\cdot\nn)+\gamma_2\DD\cdot\nn-k_F\Delta \nn\big)=0,
\end{align}
 with $\vv=\vv_0$ and
some $\gamma_1, \gamma_2$ depending on $\alpha$ and $k_F$ depending on $\alpha$ and the interaction kernel function $g$.
\end{theorem}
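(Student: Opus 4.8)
The plan is to project the $O(\ve^0)$ equation \eqref{eq:evolu-f0} onto $\mathrm{Ker}\,\CG_{f_0}^*$ and identify each term. By item 4 of Proposition \ref{prop:G-kernel}, an arbitrary element of $\mathrm{Ker}\,\CG_{f_0}^*$ has the form $\psi_0 = \CA_{f_0}^{-1}(\mathbf{\Theta}\cdot\CR h_\nn)$ for $\mathbf{\Theta}\in\mathbb{R}^3$; equivalently $\psi_0$ solves $-\CR\cdot(h_\nn\CR\psi_0)=\mathbf{\Theta}\cdot\CR h_\nn$. Since $\mathbf{\Theta}$ is arbitrary in $\mathbb{R}^3$ and $f_0=h_{\eta,\nn}$ depends on $\xx,t$ only through $\nn(\xx,t)$, testing \eqref{eq:evolu-f0} against all such $\psi_0$ produces a vector identity of the form $\nn\times(\cdots)=0$. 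First I would compute, term by term, the $L^2(\BS)$-pairing of each of the four pieces in \eqref{eq:evolu-f0} with $\psi_0$.

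The key reduction is that every pairing factors through a small number of scalar moment integrals of $h_\nn$ and $\psi_0$ over $\BS$, all of which depend only on $\eta$ (hence only on $\alpha$, via \eqref{eta-alpha}) and on the kernel $g$ (entering only through $G=\frac13\int g(\yy)|\yy|^2\,\ud\yy$ in the $U_1$ term). Concretely: (i) $\langle \partial_t f_0 + \vv_0\cdot\nabla f_0,\psi_0\rangle$ — using $\partial_t h_\nn = (\partial_t\nn\cdot\CR h_\nn/|\CR\cdot\text{stuff}|)$, more precisely $\partial_t h_\nn$ lies in the span of $\CR h_\nn$-type functions, this pairing yields a multiple of $\mathbf{\Theta}\cdot(\partial_t\nn+\vv_0\cdot\nabla\nn)$ after using the defining relation for $\psi_0$; (ii) the term $\CR\cdot(\mm\times(\nabla\vv_0)^T\cdot\mm\,f_0)$ paired with $\psi_0$, integrating the $\CR\cdot$ by parts onto $\CR\psi_0$, splits into a symmetric and antisymmetric part of $\nabla\vv_0$, giving contributions proportional to $\mathbf{\Theta}\cdot(\BOm\cdot\nn)$ and $\mathbf{\Theta}\cdot(\DD\cdot\nn)$; (iii) the elastic term $\CR\cdot(f_0\CR U_1 f_0)$, using the explicit formula \eqref{eq:U1f0} for $U_1$ in terms of $\partial_k\partial_l\MM[f_0]$ and $\MM[h_\nn]=c_1\II + c_2\,\nn\nn$ with $c_i=c_i(\eta)$, contributes a multiple of $\mathbf{\Theta}\cdot\Delta\nn$ plus terms of the form $\mathbf{\Theta}\cdot(\nabla\nn\,\nabla\nn)\nn$ which vanish upon crossing with $\nn$ (or are absorbed into the Lagrange-multiplier structure because $|\nn|=1$). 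Collecting, since $\mathbf{\Theta}$ is arbitrary, the identity becomes
\begin{align*}
\nn\times\big(\gamma_1(\partial_t\nn+\vv_0\cdot\nabla\nn+\BOm\cdot\nn)+\gamma_2\DD\cdot\nn - k_F\Delta\nn\big)=0,
\end{align*}
with $\gamma_1,\gamma_2$ given by explicit ratios of the $\eta$-dependent moments and $k_F = \text{const}\cdot G\,\alpha\,S_2^2$ consistent with the static computation \eqref{energy:Oseen-Frank-reduced}. Conversely, running the same computation backwards shows that if $\nn$ solves this Ericksen--Leslie angular-momentum equation then \eqref{eq:evolu-f0} holds for every $\psi_0\in\mathrm{Ker}\,\CG_{f_0}^*$, giving the ``if and only if''.

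The main obstacle will be the bookkeeping in step (iii): one must show that all the ``extra'' quadratic-in-$\nabla\nn$ terms produced by $\CR\cdot(f_0\CR U_1 f_0)$ either are parallel to $\nn$ (so die under $\nn\times$) or reassemble exactly into $\Delta\nn$, with no leftover anisotropic elastic terms — this is precisely where the one-constant structure emerges and where the specific form of $\MM[h_\nn]$ and the identity $|\CR h_\nn|$-moment relations are essential. A secondary technical point is justifying that $\psi_0=\CA_{f_0}^{-1}(\mathbf{\Theta}\cdot\CR h_\nn)$ is well-defined and that $\CA_{f_0}$ is invertible on the relevant orthogonal complement, which follows from the coercivity/spectral information on $\CA_h$ underlying Proposition \ref{prop:G-kernel}. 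Everything else is a finite collection of explicit spherical integrals.
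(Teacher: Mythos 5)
Your proposal follows essentially the same route as the paper's proof: test \eqref{eq:evolu-f0} against $\psi=\CA^{-1}(\mathbf{\Theta}\cdot\CR f_0)\in\mathrm{Ker}\,\CG_{f_0}^*$, reduce each pairing to explicit spherical moments of $h_{\eta,\nn}$ (with the $U_1$ term handled through \eqref{eq:U1f0}), and collect the arbitrary $\mathbf{\Theta}$ into $\nn\times(\cdots)=0$, the converse following by reversing the computation. The only ingredient the paper makes explicit that you leave implicit is the small lemma that $W_{ij}=\int_{\BS}\CA^{-1}(\CR_i f_0)\,\CR_j f_0$ equals $\gamma_1(\II-\nn\nn)$, which is precisely what guarantees that the $\partial_t\nn+\vv_0\cdot\nabla\nn$ pairing and the $\BOm\cdot\nn$ pairing carry the same coefficient $\gamma_1$.
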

\begin{proof}
Let $\WW$ be a matrix with components given by $W_{ij}=\int_\BS \CA^{-1}(\CR_i{f_0}) \CR_j{f_0}$. Then we have
\begin{itemize}
\item $\WW$ is symmetric;
\item $\WW\cdot\nn=0$;
\item For any unit vector $\pp$ satisfying $\pp\cdot\nn=0$,  $\pp^T\cdot \WW\cdot\pp$ is independent of $\pp$.
 \end{itemize}
Therefore, $\WW$ is a constant, denoted by $\gamma_1$, multiplying with $(\II-\nn\nn)$, and we have
\begin{align}
  \int_\BS \CA^{-1}(\vv\cdot\CR{f_0})\ww\cdot \CR_j{f_0}=\gamma_1\vv\cdot(\II-\nn\nn)\cdot\ww.
\end{align}

Now, assume that $\psi=\CA^{-1}(\Theta\cdot\CR{f_0})$ with $\Theta\in \mathbb{R}^3$.
First we have
$\frac{\ud f_0}{\ud{t}}=f_0'(\mm\cdot{\dot\nn})=(\dot\nn\times\nn)\cdot\CR f_0$.
Hence
\begin{eqnarray}\label{eq:first-order:1}
\langle\frac{\ud f_0}{\ud{t}}, \psi\rangle&=&\langle(\dot\nn\times\nn)\cdot\CR f_0,\CA^{-1}(\Theta\cdot\CR{f_0})\rangle
=-\gamma_1\Theta\cdot(\nn\times\dot\nn).
\end{eqnarray}
Next, we have
\begin{align}\label{eq:first-order:2}
&\langle\CR\cdot(\mm\times\kappa\cdot\mm{f_0}),
\psi\rangle=\langle\CR\cdot(\mm\times(\DD-\BOm)\cdot\mm{f_0}),\psi\rangle\nonumber\\\nonumber
&=-\langle\frac12\CA(\mm\mm:\DD)+(\nn\times(\BOm\cdot\nn))\cdot\CR{f_0}, \CA^{-1}\Theta\cdot\CR f_0\rangle\\\nonumber
&=\langle\mm\times(\DD\cdot\mm),\Theta  f_0\rangle-\langle(\nn\times(\BOm\cdot\nn))\cdot\CR{f_0}, \CA^{-1}\Theta\cdot\CR f_0\rangle\\
&=\Big(S_2\nn\times(\DD\cdot\nn)-\gamma_1\nn\times(\BOm\cdot\nn)\Big)\cdot\Theta.
\end{align}
From (\ref{eq:U1f0}), we have
\begin{eqnarray}
-\langle\CR\cdot({f_0}\CR{U}_1{f_0}),\psi\rangle&=&
\langle{U_1{f}_0}, \Theta\cdot\CR{f_0}\rangle\nonumber\\
&=&\int_\BS\alpha S_2 G
\mm\times(\Delta(\nn\nn)\cdot\mm) \cdot\Theta f_0\ud\mm\nonumber\\ \nonumber
&=&S_2^2\alpha{G}\Theta\cdot\big(\nn\times\Delta\nn\big).
\end{eqnarray}
Let $\hh=S_2^2\alpha G\Delta\nn$, then
\begin{eqnarray}\label{eq:first-order:3}
-\langle\CR\cdot({f_0}\CR{U}_1{f_0}),\psi\rangle
&=&\Theta\cdot(\nn\times\hh).
\end{eqnarray}

Let $\gamma_2=-S_2$. Then combining (\ref{eq:first-order:1}), (\ref{eq:first-order:2}) and (\ref{eq:first-order:3}), we obtain that for any $\Theta\in\BR$,
\begin{eqnarray*}\label{eq:first-order}
\Theta\cdot\Big(\nn\times\big(\gamma_1(\partial_t\nn+\vv_0\cdot\nabla\nn+\BOm\cdot\nn)+\gamma_2\DD\cdot\nn-\hh\big)\Big)=0,
\end{eqnarray*}
which closes our proof. Note that the induced Oseen-Frank energy is
\begin{align}\label{energy:Oseen-Frank-reduced-1}
E_{OF}=\frac{k_F}2 |\nabla\nn|^2,\qquad \text{with } k_F=S_2^2\alpha G.
\end{align}
which coincides the $O(\ve)$ part of (\ref{energy:Oseen-Frank-reduced}).
\end{proof}

\subsubsection{Derivation of the momentum equation}\label{sec:deriv-momentum}

Now we calculate the stress tensor in (\ref{eq:Hilbert-L-11}):
\begin{align*}
 \tau_1&=\int_\BS  \mm\mm\times (\CR{f_1}
+\sum_{\scriptscriptstyle{i+j+k=1}}f_i\CR{U}_j[f_k])\ud\mm,\\
 \tau_2&=\int_\BS f_0\nabla{U}_1[f_0] \ud\mm,\\
 \tau_3&=\int_\BS f_0 \mm\mm(\mm\mm:\DD) \ud\mm.
\end{align*}
From (\ref{eq:U1f0}), we have
\begin{align*}
\tau_2&=\int_\BS  -\frac{\alpha{S}_2}{2}G f_0(\mm\cdot\nn)\nabla\big( \mm\mm:\Delta(\nn\nn)\big) \ud\mm\\
&=\int_\BS \frac{\alpha{S}_2}{2}G \nabla f_0(\mm\cdot\nn) \mm\mm:\Delta(\nn\nn) \ud\mm+\text{gradient term}\\
&=\frac{\alpha{S}_2^2}{2}G \nabla( \nn \nn) :\Delta(\nn \nn )+\text{gradient term}\\
&= -{\alpha{S}_2^2}G \nabla  \nn_k  \Delta \nn_k +\text{gradient term}\\
&= -{\alpha{S}_2^2}G \nabla\cdot \big( \nabla \nn  \odot\nabla \nn \big) +\text{gradient term},
\end{align*}
which is actually the Ericksen tensor $\sigma^E$ up to a pressure term with the Oseen-Frank energy given by (\ref{energy:Oseen-Frank-reduced-1}).

For the stress of $\tau_1$, we can write
\begin{align*}
 \tau_1&=\Big\langle  \mm\mm\times \CR(\CH f_1+U_1[f_0])\Big\rangle_{f_0}.
 \end{align*}
It can be decomposed into two parts: $\tau_1^S$ and $\tau_1^A$, which are symmetric and anti-symmetric respectively. By
the identity (\ref{eq:stress1-sym}), the symmetric part
\begin{align*}
\tau_1^S
&=\frac12 \int_{\BS}(\mm\mm-\frac{1}{3}\II)\Big\{\frac{\pa{f_0}}{\pa{t}}+\vv_0\cdot\nabla{f_0}+\CR\cdot(\mm\times(\nabla\vv_0)^T\cdot\mm{f_0})\Big\}\ud\mm\\
&=\frac12\Big((\partial_t+\vv_0\cdot\nabla)\langle\mm\mm\rangle_{f_0}
+2\DD:\langle\mm\mm\mm\mm\rangle_{f_0}\\
&\qquad\quad-(\DD-\BOm)\cdot\langle\mm\mm\rangle_{f_0}-\langle\mm\mm\rangle_{f_0}\cdot(\DD+\BOm)\Big),
\end{align*}
where in the last equality, we have used the equation (\ref{identity-3}).

For any antisymmetric matrix $\AA$, one has
\begin{align*}
&\big\langle  \mm\mm\times f_0\CR (\CH f_1+U_1[f_0]) \big\rangle_{f_0}: \AA\\
=&\big\langle[ \mm\times(\AA\cdot\mm)]\cdot\CR (\CH f_1+U_1[f_0]) \big\rangle_{f_0} \\
=&-\big\langle[ \mm\times(\AA\cdot\mm)]\cdot\CR (\CH f_1+U_1[f_0]) \big\rangle_{f_0} \\
=&-\big\langle[ \nn\times(\AA\cdot\nn)]\cdot\CR (\CH f_1+U_1[f_0]) \big\rangle_{f_0} \\
=&-\big\langle[ \nn\times(\AA\cdot\nn)]\cdot\CR U_1[f_0] \big\rangle_{f_0} \\
=&-\Big[\nn\times(\AA\cdot\nn)\Big]\cdot(\nn\times\hh)\\
=&\frac12(\nn\hh-\hh\nn):\AA,
 \end{align*}
which implies 
\begin{align*}
  \tau_1^A=\frac12(\nn\hh-\hh\nn). 
\end{align*}

From the equation (\ref{M4}), we have
\begin{align*}
 \tau_3=&\int_\BS f_0 \mm\mm(\mm\mm:\DD) \ud\mm\\
=&~ S_4\nn\nn(\DD:\nn\nn)
+\frac{2(S_2-S_4)}{7}(\nn  \DD\cdot\nn+\DD\cdot \nn\nn)
+2\Big(\frac{S_4}{35}-\frac{2S_2}{21}
+\frac{1}{15}\Big)\DD.
\end{align*}

{Combining the above equations and noting that
\begin{align*}
\hh  =\gamma_1\NN+\gamma_2\DD\cdot\nn,
\end{align*}
we can obtain the following theorem.
\begin{theorem}\label{thm:Hilbert-12}
The equation (\ref{eq:Hilbert-L-11}) is equivalent to
  \begin{align}
 \partial_t\vv_0+\vv_0\cdot\nabla\vv_0=-\nabla{p}+\nabla\cdot(\sigma^E+\sigma^L),
   \end{align}
  where the stress terms are defined as
  \begin{align*}
    \sigma^E=&-k_F \nabla\cdot \big( \nabla \nn  \odot\nabla \nn \big),\\
    \sigma^L=&\frac{\gamma}{Re}\DD+\frac{1-\gamma}{Re}\Big(\alpha_1(\nn\nn:\DD)\nn\nn+\alpha_2\nn\NN+\alpha_3\NN\nn\\
    &\qquad+\alpha_4\DD+\alpha_5\nn\nn\cdot\DD+\alpha_6\DD\cdot\nn\nn\Big),
  \end{align*}
 with coefficients $\alpha_i(i=1,2,\cdots,6)$ given by 
  \begin{align}\label{Leslie cofficients}
\begin{aligned}
&\alpha_1=-\frac{S_4}{2},\quad\alpha_2=-\frac{1}{2}\big(S_2-\gamma_1\big),\quad\alpha_3=-\frac{1}{2}\big(S_2+\gamma_1\big),
\\
&\alpha_4=\frac{4}{15}-\frac{5}{21}S_2-\frac{1}{35}S_4,\quad
\alpha_5=\frac{1}{7}S_4+\frac{6}{7}S_2,\quad
\alpha_6=\frac{1}{7}S_4-\frac{1}{7}S_2.
\end{aligned}
\end{align}
\end{theorem}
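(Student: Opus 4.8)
The plan is to insert the stresses $\tau_1=\tau_1^S+\tau_1^A$, $\tau_2$ and $\tau_3=\DD:\langle\mm\mm\mm\mm\rangle_{f_0}$ computed above into the $O(\ve^0)$ momentum balance (\ref{eq:Hilbert-L-11}), which already reads $\partial_t\vv_0+\vv_0\cdot\nabla\vv_0=-\nabla p_0+\tfrac{\gamma}{Re}\Delta\vv_0+\tfrac{1-\gamma}{2Re}\nabla\cdot(\DD:\langle\mm\mm\mm\mm\rangle_{f_0})-\tfrac{1-\gamma}{Re}(\nabla\cdot\tau_1+\tau_2)$, and then to check that the right-hand side regroups into $\nabla\cdot(\sigma^E+\sigma^L)$ modulo a gradient that is absorbed into the redefined pressure $p$. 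Since each of the three stress evaluations is an identity (valid precisely because $f_0=h_{\eta,\nn}$ sits on the critical manifold), the ``if and only if'' is automatic once the stresses are matched. Concretely I would (i) reduce all $\mm$-moments to polynomials in $S_2,S_4,\nn,\DD,\NN$; (ii) sort the outcome according to the tensor structures $(\nn\nn:\DD)\nn\nn$, $\nn\NN$, $\NN\nn$, $\nn\nn\cdot\DD$, $\DD\cdot\nn\nn$, $\DD$; and (iii) read off $\alpha_1,\dots,\alpha_6$ as in (\ref{Leslie cofficients}), verifying Parodi's relation (\ref{Leslie relation}) and (\ref{Leslie-coeff}) as a consistency check.

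The easy ingredients are $\tau_2$ and $\tau_1^A$. For $\tau_2$, integrating by parts in the formula (\ref{eq:U1f0}) for $U_1[f_0]$ and using $\langle\mm\mm\rangle_{f_0}=S_2\,\nn\nn+\tfrac{1-S_2}{3}\II$ turns $\tau_2$, up to a gradient, into $-k_F\,\nabla\cdot(\nabla\nn\odot\nabla\nn)$, i.e. the Ericksen stress contribution with $k_F=S_2^2\alpha G$ as in (\ref{energy:Oseen-Frank-reduced-1}). For $\tau_1^A$, pairing $\tau_1$ with an arbitrary antisymmetric matrix collapses the test function onto the kernel direction $\nn\times(\AA\cdot\nn)$, on which the unknown $\CH f_1$ term drops out, leaving $\tau_1^A=\tfrac12(\nn\hh-\hh\nn)$; inserting the angular-momentum relation $\hh=\gamma_1\NN+\gamma_2\DD\cdot\nn$ of Theorem \ref{thm:Hilbert-11} (valid modulo a multiple of $\nn$, which cancels in $\nn\hh-\hh\nn$) gives $\tau_1^A=\tfrac{\gamma_1}{2}(\nn\NN-\NN\nn)+\tfrac{\gamma_2}{2}(\nn\nn\cdot\DD-\DD\cdot\nn\nn)$. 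This antisymmetric piece is exactly what produces $\alpha_2\neq\alpha_3$ and $\alpha_5\neq\alpha_6$.

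The bulk of the work is $\tau_1^S$. By the identity (\ref{eq:stress1-sym}) it equals $\tfrac12\big[(\partial_t+\vv_0\cdot\nabla)\langle\mm\mm\rangle_{f_0}+2\,\DD:\langle\mm\mm\mm\mm\rangle_{f_0}-(\DD-\BOm)\cdot\langle\mm\mm\rangle_{f_0}-\langle\mm\mm\rangle_{f_0}\cdot(\DD+\BOm)\big]$, the crucial point being that the first brace is supplied by the $O(1)$ equation (\ref{eq:Hilbert-L-10}), so $f_1$ never enters. Using $f_0=h_{\eta,\nn}$ one has $(\partial_t+\vv_0\cdot\nabla)\langle\mm\mm\rangle_{f_0}=S_2(\dot\nn\,\nn+\nn\,\dot\nn)$ with $\dot\nn=\partial_t\nn+\vv_0\cdot\nabla\nn=\NN-\BOm\cdot\nn$, and a short computation shows that all $\BOm$-contributions cancel, leaving $\tau_1^S=\tfrac12\big[S_2(\nn\NN+\NN\nn)+2\,\DD:\langle\mm\mm\mm\mm\rangle_{f_0}-S_2(\nn\nn\cdot\DD+\DD\cdot\nn\nn)-\tfrac{2(1-S_2)}{3}\DD\big]$. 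I would then substitute the fourth-moment formula (\ref{M4}), $\DD:\langle\mm\mm\mm\mm\rangle_{f_0}=\tau_3=S_4(\nn\nn:\DD)\nn\nn+\tfrac{2(S_2-S_4)}{7}(\nn\nn\cdot\DD+\DD\cdot\nn\nn)+2\big(\tfrac{S_4}{35}-\tfrac{2S_2}{21}+\tfrac1{15}\big)\DD$, and combine $-\tfrac{1-\gamma}{Re}\nabla\cdot\tau_1^S$ and $-\tfrac{1-\gamma}{Re}\nabla\cdot\tau_1^A$ with the pre-existing $\tfrac{1-\gamma}{2Re}\nabla\cdot\tau_3$ and the Newtonian term $\tfrac{\gamma}{Re}\Delta\vv_0$. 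Grouping by the six structures and matching with $\sigma^L$ yields (\ref{Leslie cofficients}); Parodi's relation then falls out because $\alpha_2+\alpha_3=-S_2=\alpha_6-\alpha_5$, both being fixed by $\langle\mm\mm\rangle_{f_0}$ alone. I expect the only real hazard to be clerical — keeping the various ``gradient terms'' (pressure contributions) apart, tracking the factor $\tfrac12$ and signs correctly, and double-checking that the $\CH f_1$ contributions genuinely vanish in \emph{both} $\tau_1^S$ and $\tau_1^A$ — rather than any conceptual difficulty.
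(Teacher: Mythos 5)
Your proposal is correct and follows essentially the same route as the paper: the same splitting into $\tau_1=\tau_1^S+\tau_1^A$, $\tau_2$, $\tau_3$, with $\tau_2$ identified (up to a gradient) as the Ericksen stress, $\tau_1^S$ handled via the identity (\ref{eq:stress1-sym}) combined with the $O(1)$ equation (\ref{eq:Hilbert-L-10}) so that $f_1$ is eliminated, $\tau_1^A=\tfrac12(\nn\hh-\hh\nn)$ obtained by pairing with antisymmetric matrices, and $\tau_3$ evaluated through (\ref{M4}) before regrouping with $\hh=\gamma_1\NN+\gamma_2\DD\cdot\nn$ to read off the Leslie coefficients. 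Your extra explicit steps (the $\BOm$-cancellation in $\tau_1^S$ and the Parodi consistency check) are sound and merely carry out the final bookkeeping the paper leaves implicit.
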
}


\subsubsection{The higher order terms and the control of the remainder terms}

We can also perform the higher order expansion in a similar way up to any order of $\ve$.
Then we arrive at a series of equations for $(f_i(t,\xx,\mm), \vv_i(t,\xx))$ ($i=0,1,2,\cdots$).
In principle, these high order terms provide more accurate corrections for approximate solutions.
However, it is not known so far whether the expansion in the right hand side of (\ref{expan:fv}) is convergent.
Moreover, even existences of corrector solutions $(f_i(t,\xx,\mm), \vv_i(t,\xx))$ are not clear, although the equations can be derived explicitly.

The rigorous confirmation on the validation of the expansion (\ref{expan:fv}) consists of the following part:
\begin{itemize}
  \item Existence of smooth solutions to $(f_i(t,\xx,\mm), \vv_i(t,\xx))$ for all $i\ge 0$;
  \item The bound of the difference between the true solution $(f_\ve, \vv_\ve)$ and the approximate solutions
\begin{align}\label{expan:fv-1}
\begin{aligned}
  f^{(k)}(t,\xx,\mm)&=\sum_{i=1}^k \ve ^i f_i(t, \xx, \mm ),\\
  \vv^{(k)}(t,\xx)&=\sum_{i=1}^k\ve^i \vv_i(t, \xx ).
\end{aligned}
\end{align}
Indeed, we need the bound tends to $0$ as $\ve\to 0$.
\end{itemize}

The proof of the first part relies  on the local existence of smooth solutions
to the Ericksen-Leslie equations, which is proved in \citeasnoun{WangZZ2013ARMA}.
Indeed, let $(\nn(t,\xx), \vv_0(t,\xx))$ be a solution on time interval $[0,T]$ satisfying
the Ericksen-Leslie system with parameters suitably given, then we can construct a corresponding
solution  $(f_0(t,\xx,\mm), \vv_0(t,\xx))=(h(\mm\cdot\nn(t,\xx)), \vv(t,\xx))$ to (\ref{eq:Hilbert-L-10})-(\ref{eq:Hilbert-L-11}).
The construction of $(f_1(t,\xx,\mm), \vv_1(t,\xx))$ is a little bit subtle, since the system is nonlinear and this implies that the existing time may be small than $T$ in general. However, by carefully examining the inherent structure of the system, one may find that
 $(f_1(t,\xx,\mm), \vv_1(t,\xx))$ could be constructed  by solving a linear system, and thus the existing time can be extended to $[0,T]$. The system for $(f_k(t,\xx,\mm), \vv_k(t,\xx))$ $(k\ge 2)$
 is straightforwardly linear and can be solved directly.

The second part, i.e., estimates for the difference, is much more difficult.  The key ingredients are
the spectral analysis for the linearized operator $\CG_h$ and a lower bound estimate for a bilinear
functional related to a modified nonlocal version of  $\CG_h$. We refer to \citeasnoun{WangZZCPAM} for details.


\subsection{Dynamics: other relations}

The connection between  the dynamic $\QQ$-tensor model and the Ericksen-Leslie model can be directly derived.
One may see \citeasnoun{BerisEdwards}
 or \citeasnoun{QianSheng1998generalized} for the derivation from the Beris-Edward model or Qian-Sheng model
 to the Ericksen-Leslie theory respectively. The correspondence
 of parameters can also be found.  For the rigorous justifications, one may apply the similar procedure discussed in Subsection \ref{sec:Doi-EL}.
  We refer to papers \cite{WangZZ2015SIMA,LiWangZhang2015DCDSB} for the uniaxial limit of the dynamic $\QQ$-tensor models such as
Beris-Edwards model,  the molecular based dynamic $\QQ$-tensor  model and the inertial Qian-Sheng's model respectively.
These results are summarized in Fig. \ref{fig:reduction-dynamic}.

\begin{figure}
    \begin{center}\label{fig:reduction-dynamic}
    \includegraphics[width=0.6\columnwidth]{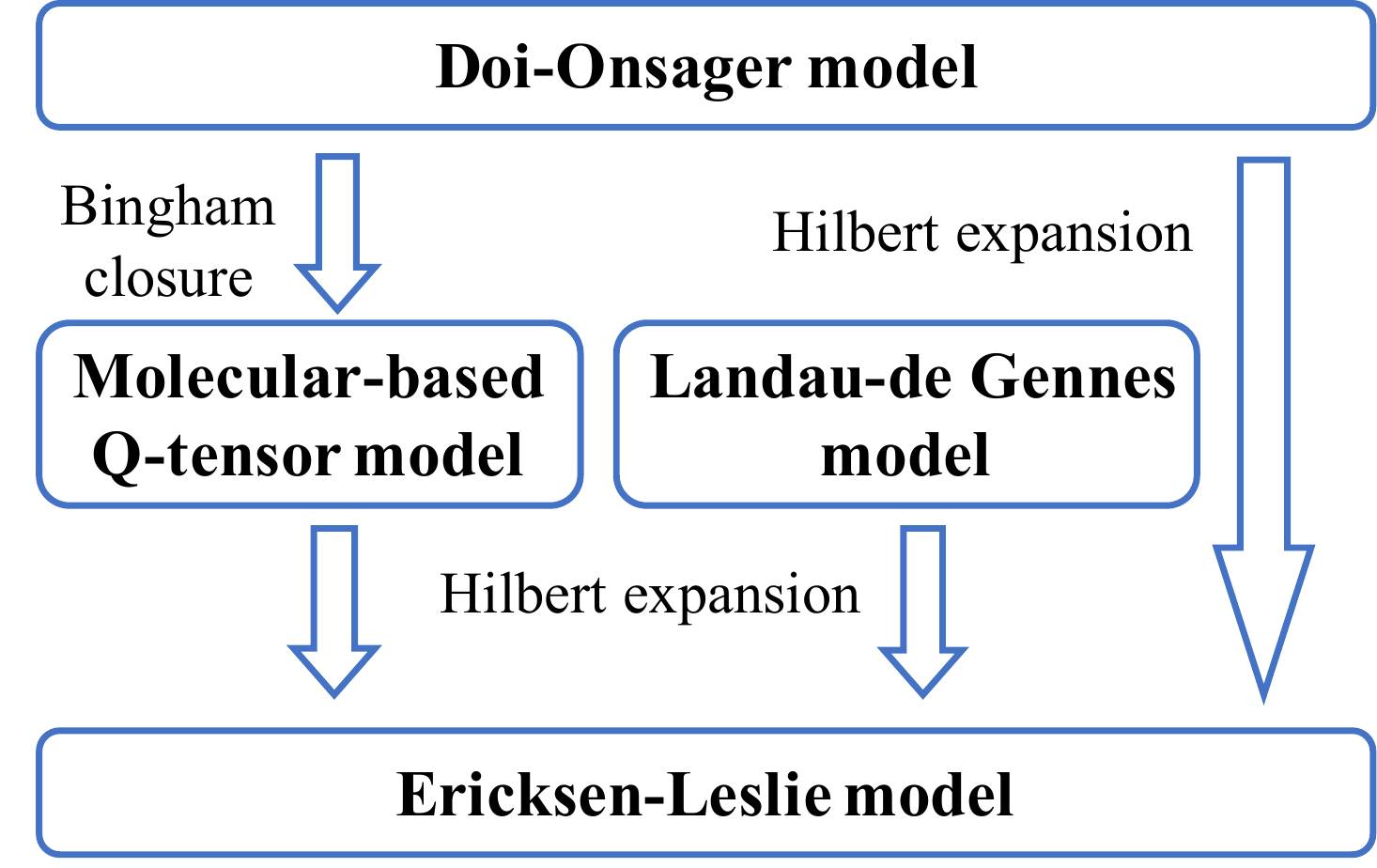}
    \caption{Reduction to Ericksen-Leslie theory from the Doi-Onsager theory or the dynamic $\QQ$-tensor theory}
    \end{center}
\end{figure}

The above discussions build a solid coincidence between the Ericksen-Leslie theory and the Doi-Onsager theory or the dynamic $\QQ$-tensor theory.
However, they are based on the framework of smooth solutions which relies on the existence
of smooth solution of the Ericksen-Leslie system. Thus it excludes the possibility of defects. To resolve this problem, one needs to study the
relations between weak solutions. \citeasnoun{LiuWangJFA} gives an attempt on this issue, where it is proved that, the solutions to the Doi-Onsager equation
without hydrodynamics converges to the weak solution of the harmonic map heat flow--the gradient flow of the one-constant Oseen-Frank energy.

\section{Numerical methods for computing stable defects of liquid crystals}
In this section, we review recent developments on the numerical methods for computing stable defect patterns of LCs. We will focus on the NLC systems again and apply the LdG theory as the model system to provide a sample of relatively new progress on the development of numerical algorithms that are applicable to general LC problems.
Two numerical approach will be reviewed in order to obtain a local minimum for a given energy functional, one is the energy-minimization based approach, which follows the idea of ``Discretize-then-Minimize'', and the other is to solve the gradient flow dynamics.

\subsection{Energy-Minimization Based Approach}

The energy-minimization based approach first discretizes the free energy by introducing a suitable spatial discretization to the order parameter of system, then adopts some optimization methods to compute local minimizers of the discrete free energy. From an optimization point of view, solving the gradient flow equation corresponds to the gradient decent method on the discrete free energy. An energy-minimization based approach enables us to apply some advanced optimization methods, such as Newton-type methods or quasi-Newton methods, which may be able to find local minimizers of the discrete free energy more efficiently.

The idea of computing defect structures in LC by an energy-minimization based approach
can be traced back to some early work on Oseen-Frank model in late 1980s.
\citeasnoun{cohen1987minimum} obtain several LC configuration by numerically minimizing the Oseen-Frank energy. However, due to the lack of convexity of the unit-length constraint,  the convergence of their algorithms is difficult to be established.  A significant improvement is made by \citeasnoun{alouges1997new}, who proposed an energy-decreasing algorithm for one constant Oseen-Frank model. The convergence of this algorithm is proved in a continuous setting.
Later, \citeasnoun{adler2015energy} proposed an energy-minimization finite-element approach to Oseen-Frank model by using Lagrangian multiplier and the penalty method, which can be applied to the cases with elastic anisotropy and electric/flexoelectric effects \cite{adler2015energy,adler2016constrained}.
A surface finite element method was developed for the surface Oseen-Frank problem to study the orientational ordering of NLCs on curved surfaces \cite{nitschke2018nematic,nestler2018orientational}.
For Ericksen model, \citeasnoun{nochetto2017finite} proposed a structure preserving discretization of the LC energy with piecewise linear
finite element and develop a quasi-gradient flow scheme for computing discrete equilibrium solutions that have the property of strictly monotone decreasing energy. They also prove $\Gamma$-convergence of discrete global minimizers to continuous ones as the mesh size goes to zero. Similar idea has also been applied to generalized Ericksen model with eight independent ``elastic'' constants \cite{walker2020finite} and an uniaxially constrained $\Qvec$-tensor model \cite{borthagaray2019structure}.


For the full $\Qvec$-tensor model, \citeasnoun{gartland1991numerical} constructed a numerical procedure that minimizes the LdG free energy model, which is based on a finite-element discretization to the tensor order parameter, and a direct minimization scheme based on Newton's method and successive over-relaxation. The corresponding analytical and numerical issues of this numerical procedure are addressed in \citeasnoun{davis1998finite}, in which the well-posedness of the discrete problem are proved. Besides more physically realistic, the full $\QQ$-tensor model has an advantage that the corresponding optimization problem is almost unconstrained (as the eigenvalue constraint is normally satisfied due to the boundary condition in a certain parameter region), although it might require more computational cost since $\Qvec$ has five degrees of freedom.

In recent years, we have incorporated the spectral method with the energy-minimization techniques and successfully applied our approach to different confined LC systems, including  three-dimensional spherical droplet \cite{hu2016disclination}, three-dimensional cylinder \cite{hu2016disclination,han2019transition}, nematic shell \cite{qu_wei_zhang_2017}, nematic well \cite{yin2020construction} and LC colloids \cite{wang2017topological,tong2017defects,wang2018Formation}.
From a computational perspective, as an efficient numerical method with high accuracy, spectral method makes an accurate free-energy calculation for 3D problems possible and enable us to determine the phase diagram of some complicated LC systems \cite{wang2017topological}.

To apply the spectral method to different confined LC systems, one can either identify an appropriate coordinates system, i.e., map the physical domain to a regular computational domain \cite{hu2016disclination,wang2017topological}, or phase-field type method \cite{wang2018Formation} to deal with the underlying geometry and the boundary conditions.
Then the free energy can be discretized by introducing a spectral discretization to the order parameter $\Qvec$. Since the order parameter $\Qvec$ is a traceless symmetric tensor, it can be written as
\begin{equation}
    \mathbf{Q} =
    \begin{bmatrix} q_1 & q_2 & q_3 \\ q_2 &  q_4 & q_5 \\ q_3 & q_5 & -q_1-q_4 \end{bmatrix}.
        \label{Qtensor}
\end{equation}
We can introduce a spectral approximation to $q_i$ separately.
The local minimizers of the resulting discrete free energy $\mathcal{F}_h({\bm \Xi})$ can be computed by some optimization methods, such as the Broyden-Fletcher-Goldfarb-Shanno (BFGS) method. Here ${\bm \Xi} \in \mathbb{R}^{5 \times K}$ consists of all expansion coefficients and $K$ is the number of basis functions.

The key step in the above numerical procedure is to compute the gradient of the discrete free energy $\mathcal{F}_h(\bm{\Xi})$ with respect to ${\bm{\Xi}}$. To illustrate the idea, we consider a simple system with a scalar order parameter $\varphi$ and a free energy
\begin{equation}\label{free_energy_example}
  \mathcal{F}[\varphi] = \int W(\varphi, \nabla \varphi) \dd \x.
 \end{equation}
We can discretize the free energy $\mathcal{F}(\varphi)$ by introducing a spectral approximation to
\begin{equation}
\varphi = \sum_{i = 1}^K \xi_i \phi_i,
\end{equation}
where $\phi_i$ is the basis function, and $\xi_i$ are the expansion coefficients that needed to be determined during the optimization procedure. The derivative of $\mathcal{F}_h(\bm \Xi)$ with respect to $\xi_i$ can be computed directly as
\begin{equation}
\frac{\partial}{\partial \xi_i} \mathcal{F}_h({\bm \Xi)} = \int \frac{\partial W}{\partial \varphi} \phi_i + \frac{\partial W}{\partial \nabla \varphi } \cdot \nabla \phi_i \dd \x,
\end{equation}
which is easy to compute by a numerical integration.
Noticed that the weak form of the Euler-Lagrangian equation of the free energy (\ref{free_energy_example}) is given by
\begin{equation}\label{discrete_derivative}
\left(\frac{\partial W}{\partial \varphi},  \psi \right) + \left( \frac{\partial W}{\partial \nabla \varphi},\nabla \psi \right) = 0,
\end{equation}
where $\psi$ is a test function and $(.,.)$ is the standard $L^2$-inner production. Hence, the stationary points of the discrete free energy $\mathcal{F}_h(\bm{\Xi})$ are exactly numerical solutions of the Euler-Lagrangian equation $\frac{\delta \mathcal{F}}{\delta \varphi} = 0$ obtained by a Galerkin method. The idea of discretization first can be extended to dynamics cases with variational structure, we refer the interesting readers to \citeasnoun{doi2019application} and \citeasnoun{liu2020lagrangian} for some recent developments. In particular, by using the strategy of ``discretize-then-variation'', \citeasnoun{liu2020variational} proposed a variational Lagrangian scheme for a phase-filed model, which has an advantage in capturing the thin diffuse interface with a small number of mesh points. Such a numerical methodology can be definitely applied to a liquid crystal system and will have potential advantages in computing the defect structures.

Next, we discuss the choice of basis function. For three-dimensional unit sphere, \citeasnoun{hu2016disclination} choose the basis function as Zernike polynomials $Z_{nlm}$ \cite{zernike1934diffraction,mahajan2007orthonormal}, defined by
\begin{equation}
Z_{nlm}(r,\theta,\phi)=R_n^{(l)}(r)Y_{lm}(\theta,\phi),
\end{equation}
where
\begin{equation*}
  \begin{aligned}
 & R_n^{(l)} =\left\lbrace
\begin{array}{ll}
\sum_{s=0}^{(n-l)/2} N_{nls}r^{n-2s} & 0\leq \frac{n-l}{2}\in \mathbb{Z},\\
0 & \text{otherwise}, \\
\end{array}
\right. \\
& N_{nls} = (-1)^s \sqrt{2n+3} \prod_{i=1}^{n-l}(n+l-2s+1+i)\prod_{i=1}^l\left( \frac{n-l}{2}-s+i \right) \frac{2^{l-n}}{s!(n-s)!}, \\
\end{aligned}
\end{equation*}
and  $Y_{lm}(\theta,\phi)=P_l^{|m|}(\cos\theta)X_m(\phi)$ are the spherical harmonic functions with
\begin{equation*}
X_m (\phi) =
    \left\lbrace
    {\begin{array}{ll}
        \frac{1}{\pi}\cos m \phi & m\geq0,\\
        \frac{1}{\pi}\sin |m|\phi & m<0,
    \end{array}}
    \right.
\end{equation*}
and $P_l^m(x) (m \geq 0)$ be the normalized associated Legendre polynomials. The Zernike polynomials $Z_{nlm}$ are orthogonal in three-dimensional unit sphere, which can simplify the computation in (\ref{discrete_derivative}). Similarly, for 2D disk and cylindrical domain \cite{hu2016disclination,han2019transition}, the authors choose 2D Zernike polynomials $Z_{nl}$ as basis functions.

The orthogonality of the basis functions is not required in above the numerical procedure. A disadvantage of using Zernike polynomials is that the value of $R_n^{(l)}(r)$ for given $r$ is not easy to compute in high accuracy. The standard basis function, such Legendre and Chebyshev polynomials, might be better choices in numerical calculation for more general problems. For example, for spherical shells and unbounded domain outside one or two spheres \cite{wang2017topological,tong2017defects,noh2020dynamic}, one can first map the physical domain to a computational domain
$$\Omega = \{(\zeta, \mu, \varphi) | -1 \leq \zeta \leq 1, 0 \leq \mu < \pi, 0 \leq \varphi < 2\pi \},$$
and choose  real spherical harmonics of $(\mu, \varphi)$ and Legendre polynomials of $\zeta$ as the basis functions.

To validate the algorithm, \citeasnoun{hu2016disclination} compared the numerical results of the radial hedgehog solution with its analytic form \cite{gartland1999instability,majumdar2012radial}.
The radial hedgehog solution is a radial symmetric solution, in which the $\mathbf{Q}$ is given by
\begin{equation*}
\mathbf{Q}_{RH}(\mathbf{r}) = h(|\mathbf{r}|)\left(\frac{\mathbf{r}}{|\mathbf{r}|}\otimes\frac{\mathbf{r}}{|\mathbf{r}|}-\frac{1}{3}\mathbf{I}\right),\ 0<|\mathbf{r}|\leq 1,
\end{equation*}
with $h(r)$ satisfies the second order ODE (\ref{config:heghog-h}), which can be solved accurately.
By increasing the number of the basis in the Zernike polynomials using $N=4k,L = 16, M = 4$, the numerical error in the total free-energy decreases to as low as $10^{-10}$ \cite{hu2016disclination}. For more complicated solutions (without radial symmetry), \citeasnoun{wang2017topological} shows the numerical error in free energy calculation for the dipole and Saturn ring defect around a spherical particle immersed in NLC. These numerical tests suggest that such numerical method is adequate for an accurate free energy calculation to determine the phase diagram of the system.

Finding minimizers of the LdG free energy is an unconstrained nonlinear optimization problem. The minimizers can be obtained by optimization methods such as gradient descent method and quasi-Newton methods.
A commonly used optimization method for such problem is the Limit-memory Broyden-Fletcher-Goldfarb-Shanno (L-BFGS) method
\cite{knyazev2001toward}.
The energy-minimization based numerical approach with L-BFGS usually converges to a local minimizer with a proper initial guess, but that is not necessarily guaranteed.
To check whether the solution is a local minimizer, we need to justify the stability of the solution on the energy landscape $F$ by computing the smallest eigenvalue $\lambda_1$ of its Hessian $\mathbb{H}(\mathbf{x})=\nabla^2 F(\mathbf{x})$,
the associated second variation of the reduced energy corresponding to $\mathbf{x}$ \cite{yin2019high,canevari_majumdar_wang_harris}:
\begin{equation}
\lambda_1 = \min_{\mathbf{v}\in\mathbb{R}^n} \quad  \frac{ \langle \mathbb{H}(\mathbf{x})\mathbf{v},\mathbf{v} \rangle}{ \langle \mathbf{v},\mathbf{v} \rangle} ,
\end{equation}
where $\langle\cdot,\cdot\rangle$ is the standard inner product in $\mathbb{R}^n$. The solution is locally stable or metastable if $\lambda_1>0$.
Practically, $\lambda_1$ can be computed by solving the gradient flow equation of $\mathbf{v}$
\begin{equation}
\frac{\partial\mathbf{v}}{\partial t} = -\frac{2\gamma}{\langle\mathbf{v},\mathbf{v}\rangle}\left(\mathbb{H}\mathbf{v}-\frac{\langle\mathbb{H}\mathbf{v},\mathbf{v}\rangle}{\langle\mathbf{v},\mathbf{v}\rangle}\right).
\label{eq:eigenvector}
\end{equation}
where $\gamma$ is a relaxation parameter and $\mathbb{H}(\mathbf{x})\mathbf{v}$ can be approximated by
\begin{equation}
\mathbb{H}(\mathbf{x})\mathbf{v} \approx -\frac{\nabla F(\mathbf{x}+l\mathbf{v})-\nabla F(\mathbf{x}-l\mathbf{v})}{2l},
\end{equation}
for some small constant $l$. We can choose $\gamma(t)$ properly to accelerate the convergence of the dynamic system \eqref{eq:eigenvector} \cite{yin2019high}.

\begin{figure}
    \begin{center}
    \includegraphics[width= 0.9 \columnwidth]{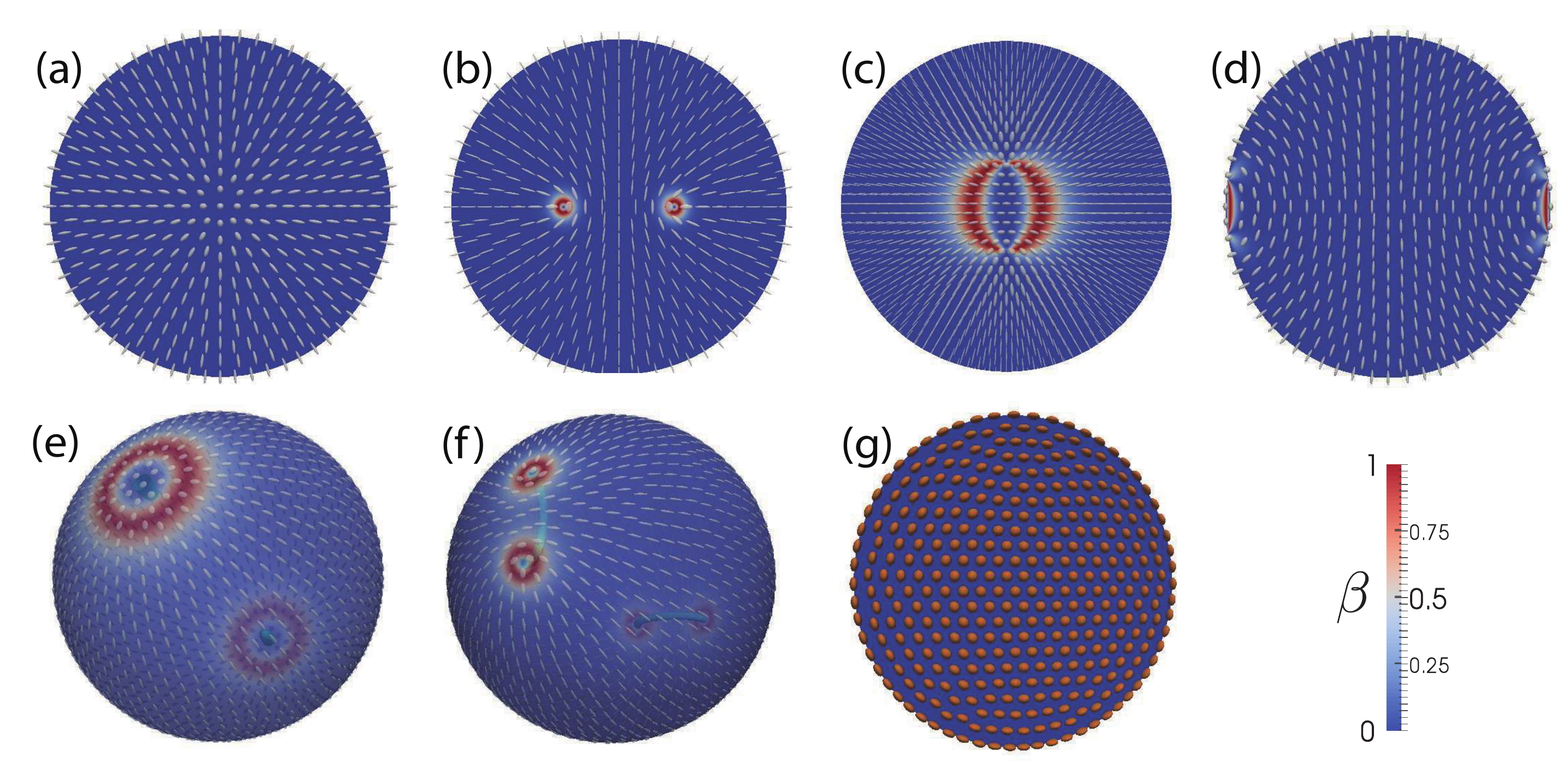}
    \caption{(Credit: \protect\cite{hu2016disclination}) Possible equilibria under various anchoring condition. (a-c) Three equilibria under the strong radial anchoring condition on the $xz$-plane: (a) radial hedgehog; (b) ring disclination and (c) split core (a zoom-in view).
    (d) Equilibrium under the relaxed radial anchoring condition.
    (e-f) Two equilibria for the planar anchoring condition.
    (g) A uniaxial solution for planar anchoring condition. $\beta = 0$ and $\mathbf{Q}$ is oblate everywhere.
	Colorbar for $\beta = 1-6\frac{(tr\mathbf{Q}^3)^2}{(tr\mathbf{Q}^2)^3}$ shown in (a)-(g), with red indicates biaxial and blue indicates uniaxial.
	$\beta$ (represented by color) and $\mathbf{Q}$-tensor (represented by ellipsoid glyph) from numerical simulation.
	The cyan tubes inside the ball in (e-f) are the iso-surfaces of $c_l = \lambda_1-\lambda_2$, the difference between two largest eigenvalues of $\mathbf{Q}$.}
    \label{ball}
    \end{center}
\end{figure}

First example of minimizing the LdG free energy is to calculate the possible equilibria of the NLCs confined in three-dimensional ball with different kind of anchoring condition \cite{hu2016disclination}. For the strong radial anchoring condition, 
three different configurations: the radial hedgehog, ring disclination and split core solutions (Figure \ref{ball}(a-c)) are obtained. In the radial hedgehog solution $\mathbf{Q}$ is uniaxial evergywhere with a central point defect, which is a rare example of pure uniaxial solution for Landau-de Gennes model \cite{lamy2015uniaxial,majumdar2018remarks}. For low temperature and large domain size, the point defect broadens into a disclination ring. The disclination ring solution is a symmetry breaking configuration. The split core solution contains a +1 disclination line in the center with two isotropic points at both ends, the existence of which is proved in \citeasnoun{yu2020disclinations} under the rotational symmetric assumption. The authors also consider relaxed radial anchoring condition, which allows $s = s(\mathbf{x})$ to be a free scalar function on $\partial\Omega$. Besides the radial hedgehog, disclination ring and split core configurations, one more solution is obtained for this boundary condition shown in Figure \ref{ball}(d). Two rings of isotropic points form on the sphere. Between the two rings, on the surface $\mathbf{Q}$ is uniaxial and oblate ($s<0$). Inside the ball, there is a strong biaxial region close to the surface. For a planar boundary condition, the authors find three stable solutions (Figure \ref{ball}(e-f)). In Figure \ref{ball}(e), two +1 point defects form at two poles. As temperature decreases, the point defect on the surface splits into two +1/2 point defects. In Figure \ref{ball}(f), the four +1/2 point defects on the sphere form the vertices of a tetrahedron. Inside the ball, there are disclination lines which intersect with the spherical surface on the above mentioned point defects. The configuration in Figure \ref{ball}(e) has two segments of +1 disclination lines with one isotropic end buried inside the ball and the other end connecting the surface. As temperature decreases, the +1 disclination splits into a +1/2 disclination with both ends open at the surface. Another metastable solution is shown in Figure \ref{ball}(g). In this configuration $\mathbf{Q}$ is uniaxial and everywhere and has radial symmetry. But unlike the radial hedgehog solution, $\mathbf{Q}$ is oblate ($s<0$) everywhere rather than prolate.

\begin{figure}
    \begin{center}
    \includegraphics[width=0.8\columnwidth]{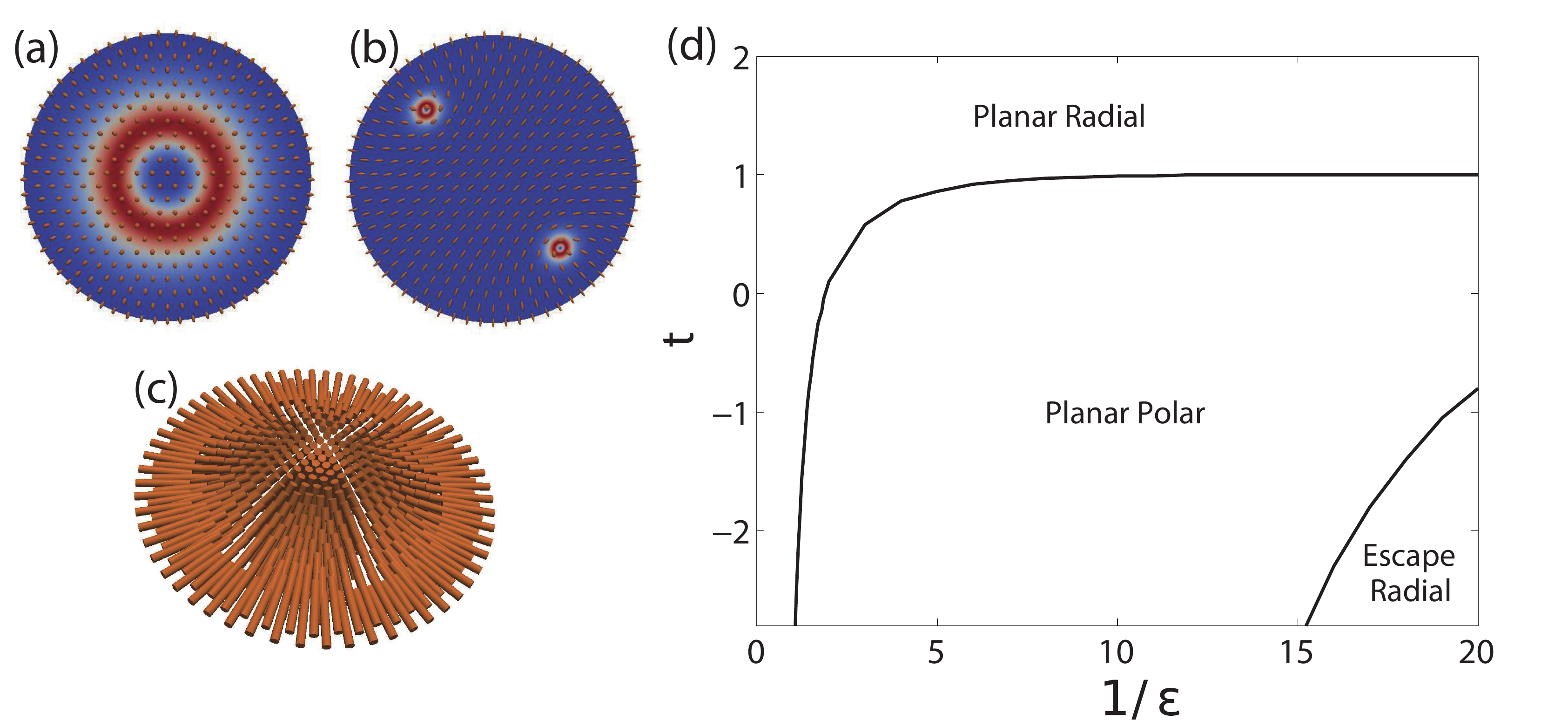}
    \caption{(Credit: \protect\cite{hu2016disclination}) Three equilibria: (a) planar radial (b) planar polar and (c) escape radial for radial anchoring condition on a 2D disc. $\beta$ is shown in color in (a-b) with red corresponds to biaxial and blue uniaxial. Ellipsoids represent the $\QQ$-tensor. Golden solid bars in (c) represent the eigendirection corresponding to the largest eigenvalue. (d) Phase diagram of the planar radial, planar polar and escape radial configurations. The partition is based on the lowest energy of the three.}
    \label{PR_PP_ER}
    \end{center}
\end{figure}

For 2D disk and 3D cylinder, \citeasnoun{hu2016disclination}
obtained minimizers on unit disc with boundary condition $\mathbf{Q}(\cos\phi, \sin\phi) = s_+(\mathbf{n}\otimes\mathbf{n}-\frac{\mathbf{I}}{3})$, where $\mathbf{n} = (\cos\frac{k}{2}\phi,\sin\frac{k}{2}\phi,0)$, $k = \pm1,\pm2,\cdots$, is kept in the $xy$-plane at the boundary. They found that the solutions are predictable. For large temperature and large domain, there is a semi-radial solution in which all the eigenvalues are rotational symmetric and the eigenvectors are invariant along the $r$-direction determined by the boundary condition. The semi-radial solution has a central defect with winding number determined by the boundary constraint. As temperature decreases and domain size increase, the semi-radial solution become unstable and the central defect quantize to defect points with +1/2 or -1/2 winding number. The number of 1/2 defects is determined by the conservation of the total winding number. When $k$ of the boundary condition is even, there is a non-singular harmonic map solution, a phenomena referred as ``escape into the third dimension" in \cite{sonnet1995alignment}. Both the escape solution and the quantized $\pm 1/2$ solutions are stable for low temperature. As the temperature decreases further, the free energy of the escape solution will be lower.
When $k = 2$ or with planar boundary condition, there are three known configurations: planar radial (PR), planar polar (PP) and escape radial (ER) as shown in Figure \ref{PR_PP_ER}. The planar radial has one +1 point defect at the center; the planar polar solution has two +1/2 point defect form at the opposite site of the disc; the escape radial has no defect in which $\mathbf{Q}$ is uniaxial everywhere with $s$ being constant and $\mathbf{n}$ being a harmonic map for the given boundary condition. A phase diagram for these three configurations is shown in Figure \ref{PR_PP_ER}.

\begin{figure}
    \begin{center}
    \includegraphics[width=\columnwidth]{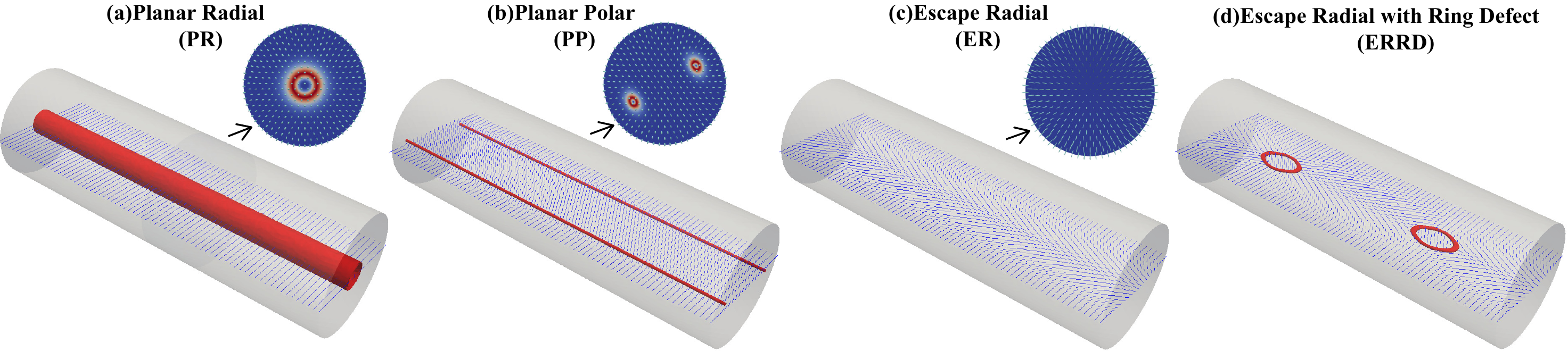}
    \caption{(Credit: \protect\cite{han2019transition}) Four equilibria of NLC confined in cylinder with homeotropic anchoring condition (a) Planar Radial; (b) Planar Polar; (c) Escape Radial and (d) Escape Radial with Ring Defect. Disclination lines are represented by the red isosurface of $c_l$. The disk in the upper-right corner is the transversal view of structure in (a)-(c). The ellipsoid on disk represents the $\mathbf{Q}$-tensor on the disc and the color corresponds to $\beta$, ranging from 0 (blue) to 1 (red).}
    \label{PR_PP_ER_ERRD}
    \end{center}
\end{figure}

With z-axial invariance, the PR, PP and ER (Figure \ref{PR_PP_ER_ERRD}(a-c)) are still stable equilibria of NLC confined in a cylinder with homeotropic boundary condition. Without axial invariance, the escape radial with ring defect (ERRD) configuration (Figure \ref{PR_PP_ER_ERRD}(d)) is also stable equilibria which has two disclination rings lying on a plane parallel to the axial direction of the cylinder. Two disclination rings can be considered as the broadening of two point defects with topological charge $+1$ and $-1$ \cite{Brada2003Annihilation}. The ERRD solution can be considered as a quenched metastable state formed by jointing ER configurations with opposite pointing directions, which means the free energy of ERRD is always higher than ER.

For the liquid crystal colloids (LCC) system, \citeasnoun{wang2017topological} presented a detailed numerical investigation to the LdG free-energy model under the one-constant approximation for systems of single and double spherical colloidal particles immersed in an otherwise uniformly aligned NLC. For the strong homeotropic anchoring with one spherical particle, two types of configurations, quadrupolar (also known as Saturn-ring structure) and dipolar states, illustrated in Figure \ref{single}, are obtained.
In the quadrupolar state,
a $-\frac{1}{2}$ disclination line forms a ring located at the spherical equator  and the entire $\mathbf{Q}$ tensor has an axisymmetry about the $z$ axis and reflection symmetry with respect to the $x-y$ plane through the spherical center.
The dipolar solution is an axisymmetric configuration and contains no reflection symmetry with respect to the $x-y$ plane. The defect ring in dipolar is near the spherical top. The phase diagram for the single-particle problem is shown in Figure \ref{single}. Below the critical temperature of isotropic-nematic phase transition, the dipolar state is stable for large particle systems and can only be found to the right of the dashed curve, which represents the stability limit of it. The quadrupolar pattern can be found as the stable or metastable state over the entire parameter space below the critical temperature.

\begin{figure}
    \begin{center}
    \includegraphics[width=\columnwidth]{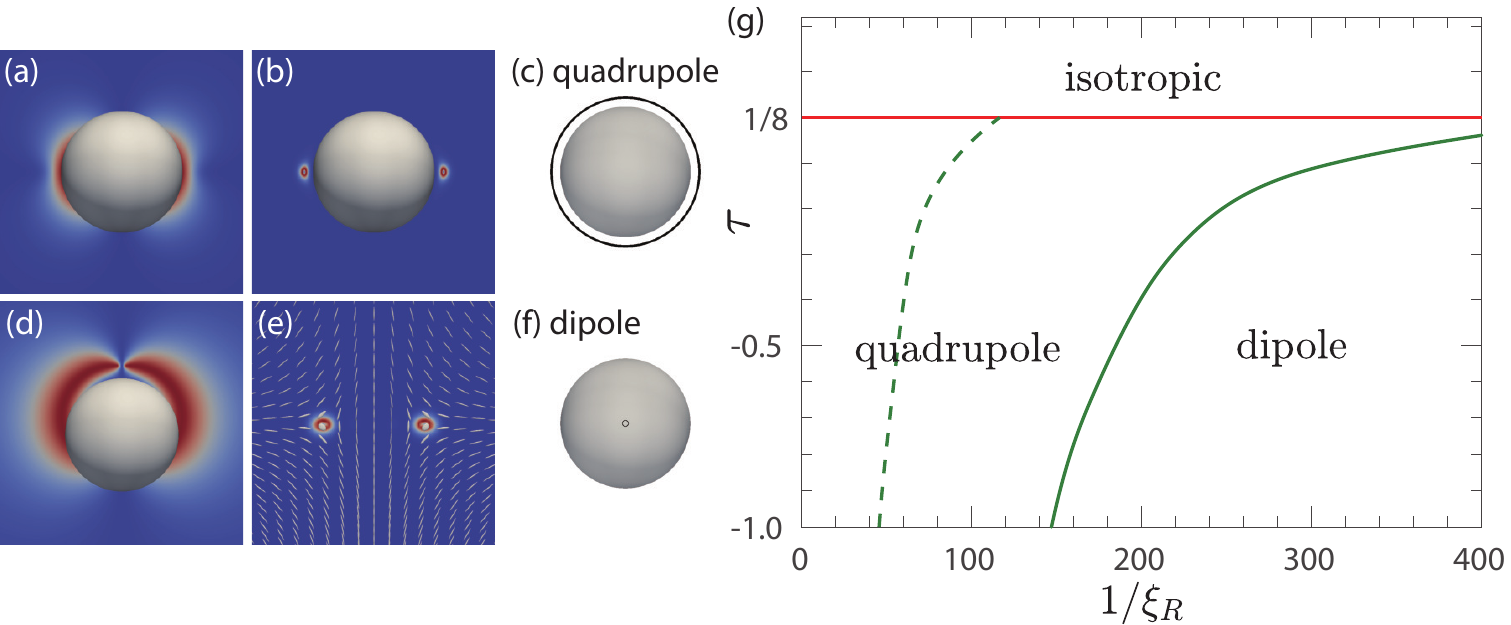}
    \caption{(Credit: \protect\cite{wang2017topological}) Two equilibria of NLC surrounding
a spherical particle: (a)-(c) quadrupolar state,
(d)-(f) dipolar state, as well as the corresponding regimes in the
phase diagram (g) where these states have the lowest free energies and the dashed line
represents the stability limit of the dipolar state.
Each state is illustrated by using three views: side views of the
$\QQ$-tensor element $Q_{11}$ [in (a) and (d)], the same side views of
the biaxiality coefficient $\beta$ [in (b) and (e)], and top view of the
defect location, indicated by the black circles produced from plotting
the isosurface of $c_l = 0.1$. (e) is an enlarged version of the defect
area on top of the sphere, where the tensor field (white ellipsoids)
indicates a $-1/2$ defect line.}
    \label{single}
    \end{center}
\end{figure}

To study the case with two spherical particles of equal radii are placed in a nematic fluid, \citeasnoun{wang2017topological}
introduced the bispherical coordinates $(\xi,\mu,\phi)$ ($-\xi_0\leq\xi\leq\xi_0$,$0\leq\mu<\pi$ and $0\leq\phi<2\pi$) \cite{fukuda2004interaction}, and choose real spherical harmonics of $(\mu, \varphi)$ and Legendre polynomials of $\zeta$ ($\zeta = \xi/\xi_0$) as basis functions.
The relation between bispherical coordinates $(\xi,\mu,\phi)$ and Cartesian coordinates $(x,y,z)$ is
\begin{equation*}
\begin{aligned}
  x &= a\frac{\sin \mu}{\cosh\xi - \cos\mu} \cos\phi,, y = a\frac{\sin \mu}{\cosh\xi - \cos\mu} \sin\phi, z = a\frac{\sinh\xi}{\cosh\xi - \cos\mu},
\end{aligned}
\end{equation*}
where $a = \sqrt{(D/2)^2-R^2}$, $R$ is radius of balls, $D$ is the distance between the centers of the two spherical particles. At a fixed $\phi$, $\xi=\mu=0$ corresponds to infinity, and the surface of constant $\xi$ represents a sphere given by
\begin{equation}
x^2+y^2+(z - \mathrm{acoth} \xi)^2=\frac{a^2}{\sinh^2\xi}.
\end{equation}
So the surfaces of the two spherical particles are represented by $\xi = \pm\xi_0$, and $\xi_0 = \cosh^{-1}(D/2R)$. Inspired by the configuration of single particle in NLC, \citeasnoun{wang2017topological} consider the dimer complex composed of dipole-dipole pair, quadrupole-quadrupole pair and dipole-quadrupole pair. They find three stable state configuration: entangled hyperbolic defect ($\rm H$), unentangled defect rings (${\rm U}_{\gamma}$) and parallel dipoles (${\rm D}_0$) in the dimer system after the free energy is minimized with respect to both the far-field nematic director (represented by $\gamma \in [0, \pi / 2]$) and inter-particle distance $D/R$. Both $H$ and $U_{\gamma}$ states are variations of the quadrupolar structure in Figure \ref{single}. $H$ state, in which $\gamma = \pi/2$, is only stable in the systems with extremely small particle size.
 Due to the two defect rings both have the same winding number -1/2, in $U_{\gamma}$ the portions of the defect rings near the sphere-sphere center locally repel each other. As these portions are twisted upwards and downwards, the original spherical axes tilt in order to accomodate a larger distance between these two repelling portions. The value of the optimal $\gamma$ deviates from $\gamma=\pi/2$ starting from the isotropic-nematic transition line and increases as the system moves to a smaller particle size state. In $D_0$, the far field tilt angle $\gamma = 0$.
Beyond the three free-energy ground states, six metastable states can be computed. We refer the interested reader to \citeasnoun{wang2017topological} for more detailed discussions.

\begin{figure}
    \begin{center}
    \includegraphics[width=\columnwidth]{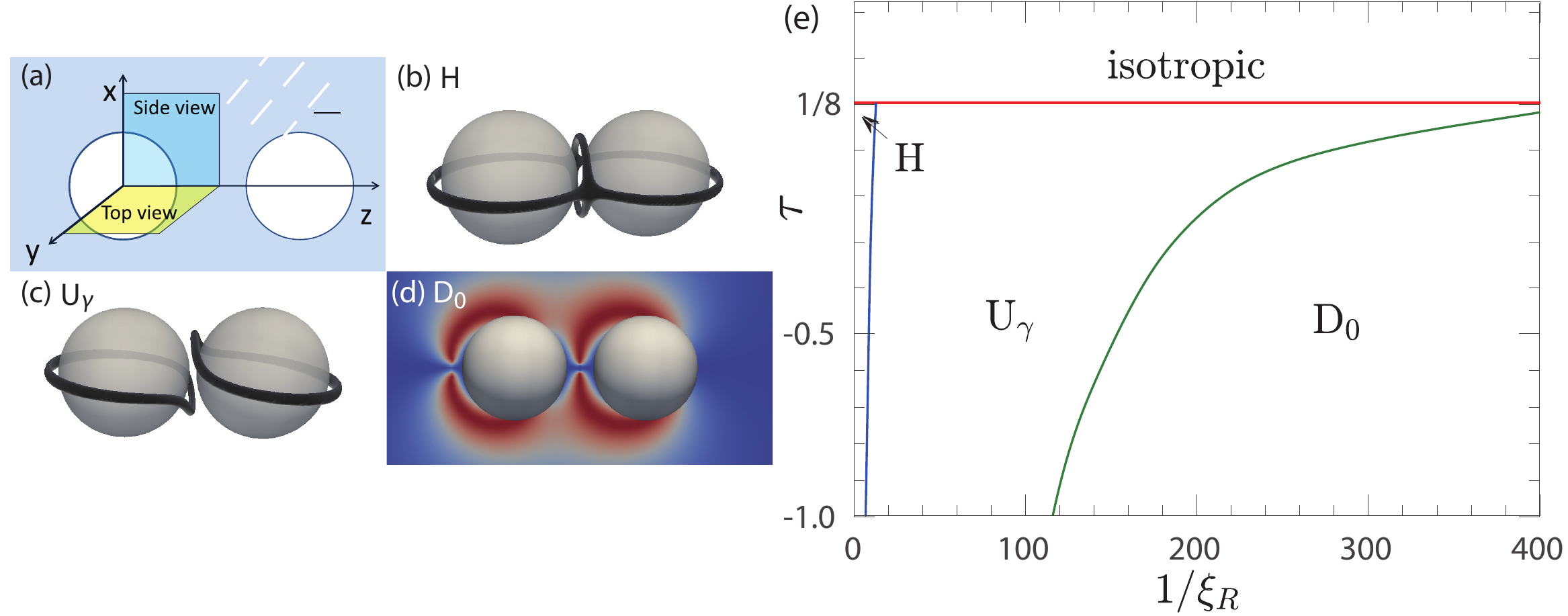}
    \caption{(Credit: \protect\cite{wang2017topological}) (a) Sketch of the coordinate system for a dimer problem;
(b) entangled hyperbolic defect (H) state where $\gamma = \pi/2$; (c) unentangled
defect rings (U$_{\gamma}$) where $\gamma= \pi/2$; (d) parallel dipole-dipole
state (D$_0$) where $\gamma = 0$; and (e) the phase diagram that describes the
regimes where these defect states have free-energy minima in terms
of a reduced temperature $\tau$ and reduced spherical radius $1/\xi_R$. The
phase diagram was determined based on the ground-state calculation,
after consideration of all other possibilities including minimization
with respect to the particle distance $D$ and tilt angle $\gamma$. (b) and
(c) are illustrations of the defect lines determined from isosurface of the largest eigenvalue of $\mathbf{Q}$, $\lambda_1 = 0.25$. (d) is the cross-section view of $Q_{11}$ .}
    \label{double}
    \end{center}
\end{figure}

 For more complex geometries, phase-field approaches or diffuse interface methods \cite{yue2004diffuse} can be incorporated into the above numerical framework. For example, combining the phase-field method with Fourier spectral method, \citeasnoun{wang2018Formation} investigated the formation of three-dimensional colloidal crystals in a nematic liquid crystal dispersed with large number of spherical particles. The corresponding defect structures in the space-filler nematic liquid crystal induced by the presence of the spherical surface of the colloids are computed. Multiple configurations are found for each given particle size and the most stable state is determined by a comparison of the free energies. Their numerical studies show that from large to small colloidal particles, a sequence of 3D colloidal structures, which range from quasi-one-dimensional (columnar), to quasi-two-dimensional (planar), and to truly three-dimensional, are found to exist.

 Besides the NLCs, similar numerical technique can also be applied to cholesteric LCs. \citeasnoun{tong2017defects} investigated the defect structures around a spherical colloidal particle in a cholesteric LC, i.e., $\Omega = \mathbb{R}^3 \backslash B(0, 1)$. In order to deal with the inhomogeneity of the cholesteric at infinity, they first identify the ground state $\Qvec_0$ and use spectral method to approximate $\Qvec - \Qvec_0$. Instead of using  classical orthogonal systems on the unbounded domain, such as Laguerre polynomials, they combine the exponential mapping and the truncation techniques to capture the property of $\Qvec - \Qvec_0$. The mapping between the computational domain $(\rho,\theta,\phi)$ and the physical domain $(r,\theta,\phi)$ are given by
 \begin{equation}
r = \sinh \left( \frac{\rho + 1}{2} L \right), \quad \rho \in [-1. 1],
  \end{equation}
 where $r$ is the radial distance in the spherical coordinates. Fig. \ref{Tong} shows two types of defect configurations obtained in a cholesteric LC by their numerical simulation for strong homeotropic anchoring condition : twisted Saturn ring (Figs. \ref{Tong}(a-c)) and cholesteric dipole (Figs. \ref{Tong}(d-f)).
\begin{figure}
    \begin{center}
    \includegraphics[width= 0.7 \columnwidth]{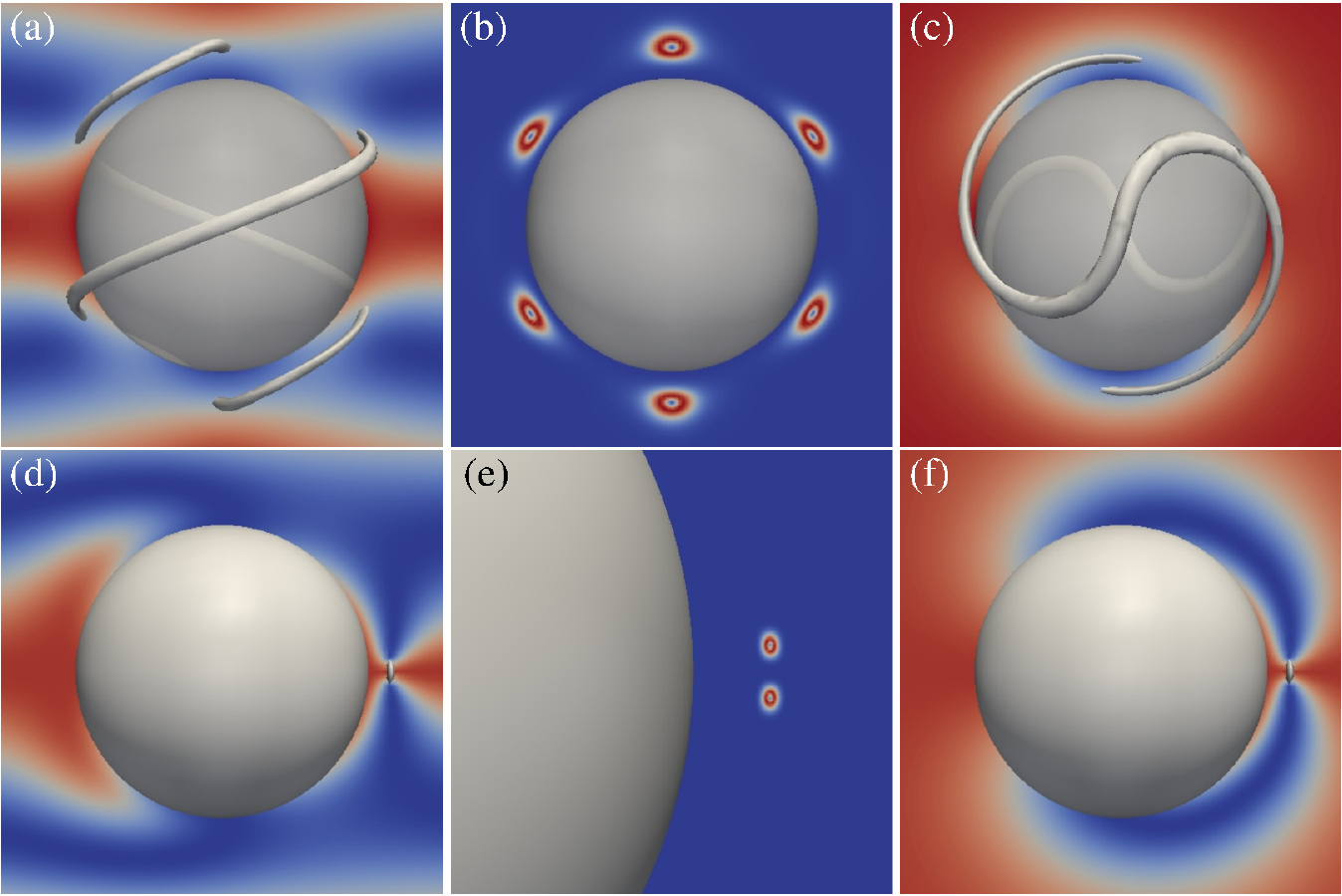}
    \caption{(Credit: \protect\cite{tong2017defects}) Two possible configurations around a spherical particle: (a-c) The defect structures in the twisted Saturn ring. (d-f) The defect structures in the cholesteric dipole.}
    \label{Tong}
    \end{center}
\end{figure}
Similar numerical method can be used to study defect structures in cholesteric LC and blue phase under the different geometric constraints \cite{fukuda2010novel,guo2016cholesteric,darmon2016topological}.

\subsection{Gradient Flow Approach}
Gradient flow is a dynamics driven by a free energy. There are quite a number of works devoted to obtain the defect patterns by solving the gradient flow equation corresponding to different LC systems \cite{fukuda2004interaction,ravnik2009landau,canevari2017order,wang2019order,macdonald2020moving}. For the LdG theory, the corresponding $L^2$-gradient flow equation can be written as
\begin{equation}
    \frac{\partial \Qvec}{\partial t} = - \gamma \frac{\delta \mathcal{F}}{\delta \Qvec},
\end{equation}
where $\gamma > 0$ is dissipative coefficient.
An advantage of gradient flow approach is that it also provides part information of dynamical evolution of defect structure.

On the numerical perspective, the main challenge in developing numerical schemes for gradient flow systems is to maintain the energy dissipation property at the discrete level. During the last a few decades, energy stable numerical schemes for gradient flow systems have been studied extensively, examples include full-implicit scheme \cite{xu2019stability,du2020phase}, convex splitting method \cite{Elliott1993global,eyre1998unconditionally}, stabilization method \cite{shen2010numerical,cai2017stable},
invariant energy quadratization \cite{Yang2016linear,zhao2017novel} and scalar auxiliary variable (SAV) \cite{shen2018scalar,shen2019new}.

In a recent work, \citeasnoun{shen2019new} developed a second-order unconditionally energy stable based on SAV and Crank-Nicolson for the LdG theory, in which the LdG free energy is given by
\begin{equation}
\mathcal{F}[\Qvec] = \mathcal{F}_{\rm b}[\Qvec] + \mathcal{F}_{\rm el} [\Qvec, \nabla \Qvec],
\end{equation}
where
\begin{equation*}
\begin{aligned}
  & \mathcal{F}_{\rm b} = \int f_{\rm b}(\Qvec) = \int \frac{A}{2} \tr \Qvec^2 - \frac{b}{3} \tr \Qvec^3 + \frac{C}{4} (\tr \Qvec^2)^2 \dd \x \\
  & \mathcal{F}_{\rm el} = \int \frac{L_1}{2} |\nabla \Qvec|^2 + \frac{L_2}{2} Q_{ik, i} Q_{jk,j} + \frac{L_3}{2} Q_{jk,i} Q_{ij, k} \dd \x. \\
\end{aligned}
\end{equation*}
Here $C > 0$, $L_1 > 0$ and $L_1 + L_2 + L_3 > 0$, so the total energy is bounded from below.
Due to quartic term $C (\tr \Qvec^2)^2$, there exists $a_1, C_0 > 0$ such that $f_b(\Qvec) - a_1 \tr \Qvec^2 /2 + C_0 > 0$, Let $r$ be the scalar auxiliary variable defined by
\begin{equation}
r(t) = \sqrt{\mathcal{F}_1} = \sqrt{\mathcal{F}_{\rm b} - \int_{\Omega} \frac{a_1}{2} \tr \Qvec^2 + C_0}
\end{equation}
and
\begin{equation}
\mathcal{L} = a_1 \Qvec + \frac{\delta \mathcal{F}_e}{\delta \Qvec},
\end{equation}
then the gradient flow equation can be rewritten as
\begin{equation}
  \begin{cases}
    & \frac{\partial \Qvec}{\partial t} = - \mu \\
    & \mu = \mathcal{L} \Qvec + \frac{r(t)}{\sqrt{\mathcal{F}_1}} \frac{\delta \mathcal{F}_1}{ \delta \Qvec} \\
    & r_t = \frac{1}{2 \sqrt{\mathcal{F}_1}} \int_{\Omega} \frac{\delta \mathcal{F}_1}{\delta \Qvec} : \Qvec_t \dd \x. \\
  \end{cases}
\end{equation}
and the corresponding numerical scheme is given by
\begin{equation} \label{GF_Q}
\begin{aligned}
  & \frac{\Qvec^{n+1}-\Qvec^n}{\Delta t}= -\mu^{n+1/2}, \\
  & \mu^{n+1/2}= \frac{1}{2}(\Qvec^{n+1} + \Qvec^n)
  +\frac{r^{n+1}+r^n}{2\sqrt{\mathcal{F}_1[\bar{\Qvec}^{n+1/2}]}}
  \frac{\delta \mathcal{F}_1}{\delta \Qvec}[\bar{\Qvec}^{n+1/2}], \\
  & r^{n+1}-r^n= \int_\Omega\frac{1}{2\sqrt{\mathcal{F}_1[\bar{\Qvec}^{n+1/2}]}}\left(\frac{\delta \mathcal{F}_1}{\delta \Qvec}[\bar{\Qvec}^{n+1/2}]\right)_{ij}
  (\Qvec_{ij}^{n+1}-\Qvec_{ij}^n) \dd  \bm{x}.
\end{aligned}
\end{equation}

For the LdG free energy with cubic term ($L_4 Q_{kl} Q_{ij, k} Q_{ij, l}$), although the total free energy is not bounded below \cite{ball2010nematic}, \citeasnoun{cai2017stable} constructed an unconditionally stable numerical scheme for 2D $\Qvec$-tensor by a stabilizing technique, They also established unique solvability and convergence of such a scheme. The convergence analysis leads to the well-posedness of the original PDE system for the 2D $\QQ$-tensor model. Several numerical examples are presented to validate and demonstrate the effectiveness of their scheme.

\subsection{Machine learning approach}
Over the last decade, machine learning has made a huge impact on the research areas of materials and soft matter, showing the highly powerful and effective performance by using the techniques of deep learning.
Here, we just take a recent work by \citeasnoun{walters2019machine} as an example of the LC system to demonstrate such trend.
In \citeasnoun{walters2019machine}, authors investigated a problem for identifying the topological defects of rod-like molecules confined in a square box from ``images''.
Unlike conventional images with correlated physical features where supervised machine learning has been successful, these images are coordinated files generated from an off-lattice sampling.
A single-line structure $[l, x_l, y_l, \theta_l]$ is given for each rod-like molecule, where $[x_l, y_l, \theta_l]$ specifies the location coordinates and angular orientation of the molecule labelled $l$, and the labels are not related .
The task is to identify which of the four defect patterns from the off-lattice data, and two types of machine-learning procedures, the feedforward neural network (FNN) and the recurrent neural network (RNN), are considered.
It is shown that FNN is not readily appropriate for studying defect types in this off-lattice model.
However, with a coarse-grained position sorting in the initial data input, referred to as \emph{presorting}, an effective learning can be realized by FNN.
On the contrary, RNN performs exceptionally well in identifying defect states in the absence of presorting.
Mort importantly, \citeasnoun{walters2019machine} pointed out that by dividing the whole image into small cells, an RNN approach with the data in each cell as an input can detect the types and positions of nematic defects in each image instead of naked eyes.

\section{Numerical methods for computing liquid crystal hydrodynamics}
In this section, we review some progress on numerical methods to study LC hydrodynamics, including vector models, tensor models, and molecular models.

\subsection{Vector models}
A NLC flow behaves like a regular liquid with molecules of similar size, and also displays anisotropic properties due to the molecule alignment, which is usually described by the local director field.
The Ericksen--Leslie equations have been applied to describe the flow of NLCs and attracted many theoretical and numerical researches.
According to the macroscopic hydrodynamic theory of NLCs established by \citeasnoun{ericksen1961conservation} and \citeasnoun{leslie1979theory}, \citeasnoun{lin1989nonlinear} proposed a simplied Ericksen--Leslie equations for a NLC flow,
\begin{equation}\label{eqn:eldyn}
\left\{
\begin{array}{l}
\partial_t \boldsymbol u + (\boldsymbol u\cdot \nabla)\boldsymbol u-\nu \Delta \boldsymbol u+\nabla p +\lambda\nabla\cdot((\nabla \boldsymbol d)^\top \nabla \boldsymbol d)= \boldsymbol0,\\
\nabla\cdot \boldsymbol u=0\\
\partial_t \boldsymbol d + (\boldsymbol u\cdot \nabla)\boldsymbol d-\gamma\Delta \boldsymbol d-\gamma |\nabla \boldsymbol d|^2 \boldsymbol d = \boldsymbol 0,\\
|\boldsymbol d|=1.
\end{array}
\right.
\end{equation}
Here $\boldsymbol u$ denotes the solenoidal velocity field, $p$ denotes the pressure, and $\boldsymbol d$ denotes the molecular alignment satisfying a sphere constraint almost everywhere.
The positive parameters $\nu,\lambda,\gamma$ are respectively a fluid viscosity constant, an elastic constant and a relaxation time constant.
The system \eqref{eqn:eldyn} consists of the Navier-Stokes equations coupled with an extra anisotropic stress tensor and a convective harmonic map equation.
Although simple, this system keeps the core of the mathematical structure, such as strong nonlinearities and constraints, as well as the physical structure, such as the anisotropic effect of elasticity on the velocity vector field $\boldsymbol u$, of the original Ericksen-Leslie system.
The first energy equality, which is established under certain boundary conditions, expresses the balance of energy in the system between the kinetic and elastic energies.

\citeasnoun{badia2011overview} provided an excellent overview of the numerical methods for (\ref{eqn:eldyn}).
Since an almost everywhere satisfaction of the sphere constraint restriction is not appropriate at a numerical level, two alternative approaches have been introduced: a penalty method and a saddle-point method.
With a Ginzburg--Landau penalty function $F_\varepsilon(\boldsymbol d)=(|\boldsymbol d|^2-1)^2/4\varepsilon^2$ to enforce the sphere constraint, a penalty formulation is obtained by weakening the constraint as
\begin{equation}\label{eqn:eldyn+gl}
\left\{
\begin{array}{l}
\partial_t \boldsymbol u +(\boldsymbol u\cdot \nabla)\boldsymbol u-\nu \Delta \boldsymbol u+\nabla p +\lambda\nabla\cdot((\nabla \boldsymbol d)^\top \nabla \boldsymbol d)= \boldsymbol0,\\
\nabla\cdot \boldsymbol u=0\\
\partial_t \boldsymbol d +(\boldsymbol u\cdot \nabla)\boldsymbol d+\gamma(f_\varepsilon(\boldsymbol d)-\Delta \boldsymbol d) = \boldsymbol 0,\\
|\boldsymbol d|\leqslant1,
\end{array}
\right.
\end{equation}
where $f_\varepsilon(\boldsymbol d)=\nabla_{\boldsymbol d}F_\varepsilon(\boldsymbol d)$, and the energy estimate for \eqref{eqn:eldyn+gl} was established by \citeasnoun{lin1995nonparabolic}.
Finite element methods of mixed types play an important role when designing numerical approximations for the penalty formulation in order to preserve the intrinsic energy estimate.
Alternatively, with a Lagrange multiplier $q$ to enforce the sphere constraint, a saddle-point formulation reads as
\begin{equation}\label{eqn:eldyn+lm}
\left\{
\begin{array}{l}
\partial_t \boldsymbol u +(\boldsymbol u\cdot \nabla)\boldsymbol u-\nu \Delta \boldsymbol u+\nabla p +\lambda\nabla\cdot((\nabla \boldsymbol d)^\top \nabla \boldsymbol d)= \boldsymbol0,\\
\nabla\cdot \boldsymbol u=0\\
\partial_t \boldsymbol d+ (\boldsymbol u\cdot \nabla)\boldsymbol d+\gamma(q \boldsymbol d-\Delta \boldsymbol d) = \boldsymbol 0,\\
|\boldsymbol d|=1,
\end{array}
\right.
\end{equation}
and the energy estimate for \eqref{eqn:eldyn+lm} was derived by \citeasnoun{badia2011finite}.
These approaches are suitable for their numerical approximation by finite elements, since a discrete version of the restriction is enough to prove the desired energy estimate.
The penalized Ginzburg--Landau problem \eqref{eqn:eldyn+gl} can be stated in a saddle-point framework as well,
\begin{equation}\label{eqn:eldyn+glsp}
\left\{
\begin{array}{l}
\partial_t \boldsymbol u +(\boldsymbol u\cdot \nabla)\boldsymbol u-\nu \Delta \boldsymbol u+\nabla p +\lambda\nabla\cdot((\nabla \boldsymbol d)^\top \nabla \boldsymbol d)= \boldsymbol0,\\
\nabla\cdot \boldsymbol u=0\\
\partial_t \boldsymbol d+ (\boldsymbol u\cdot \nabla)\boldsymbol d+\gamma(q\boldsymbol d-\Delta \boldsymbol d) = \boldsymbol 0,\\
|\boldsymbol d|^2-1=\varepsilon^2 q,
\end{array}
\right.
\end{equation}
which establishes a connection between \eqref{eqn:eldyn+gl} and \eqref{eqn:eldyn+lm}.

The numerical approximation of the Ericksen--Leslie equations is difficult and computationally expensive because of the coupling between the nonlinear terms and the constraint conditions.
\citeasnoun{liu2000approximation} dealt with the approximation of \eqref{eqn:eldyn+gl} for 2D domains, and convergence of finite element approximations is established under appropriate regularity hypotheses with dependence on the penalty parameter $\varepsilon$.
\citeasnoun{du2001fourier} studied a Fourier-spectral method for \eqref{eqn:eldyn+gl} by proving its convergence in a suitable sense and establishing the existence of a global weak solution of the original problem and its uniqueness in the 2D case.
The error estimates exhibit the spectral accuracy of the Fourier-spectral method, and a fully discrete scheme is constructed with a complete stability and error analysis.
\citeasnoun{feng2004analysis} proved a convergence result for a Ginzburg--Landau-type equation where the dependence of $\varepsilon$ is of polynomial order.
\citeasnoun{lin2006simulations} presented one of the simplest time-stepping schemes for the 2D Ginzburg--Landau problem \eqref{eqn:eldyn+gl}, where the space is discretized by $H^1$-conforming finite elements and time is discretized implicitly with respect to the linear terms and semi-implicitly with respect to the nonlinear terms, while the anisotropic stress tensor is fully explicit.
This scheme reduces the computational cost significantly and larger scale numerical simulations are allowed because only a sequence of two decoupled linear problems for the velocity-pressure pair and the director field need to be solved separately at each time step.
\citeasnoun{liu2007dynamics} presented an efficient and accurate numerical scheme for \eqref{eqn:eldyn+gl} in a cylinder domain.
The time discretization is based on a semi-implicit second-order rotational pressure-correction scheme and the Legendre--Galerkin method is used for the space variable.
Annihilation of a hedgehog-antihedgehog pair with different types of transport is simulated numerically.

By introducing an auxiliary variable $\boldsymbol w=\nabla \boldsymbol d$, \citeasnoun{liu2002mixed} avoided using Hermite finite elements for the approximation of the director equation in \eqref{eqn:eldyn+gl} and formulated the director equation in the framework of mixed methods.
They showed how a mixed method may be used to eliminate the need for Hermite finite elements and establish convergence of the method.
\citeasnoun{girault2011mixed} considered another auxiliary variable $\boldsymbol w=-\Delta \boldsymbol d$ to avoid the large number of extra degrees of freedom and the nonlinearity of the numerical schemes, and constructed a fully discrete mixed scheme which is linear, unconditionally stable and convergent towards \eqref{eqn:eldyn+gl}.
With an auxiliary variable $\boldsymbol w=-\Delta \boldsymbol d+f_\varepsilon(\boldsymbol d)$, two finite-element Euler time-stepping schemes that are implicit for the linear terms and semi-implicit for the nonlinear have been developed.
Specifically, \citeasnoun{becker2008finite} proposed a fully implicit approximation to deal with the time integration of $f_\varepsilon$, while \citeasnoun{guillen2013linear} suggested a fully explicit one.
The time step of the explicit scheme must be quite small if the size of $\varepsilon$ is proportional to the space parameter $h$, but their numerical experiences have demonstrated to be optimal.
\citeasnoun{lin2007energy} used convenient conformal $C^0$ finite elements in solving the problem in the weak form, and derived a discrete energy law for a modified midpoint time discretization scheme.
A fixed iterative method is used to solve the resulted nonlinear system so that a matrix-free time evolution may be achieved and velocity and director variables may be solved separately.

\citeasnoun{becker2008finite} presented a finite element scheme directly for the Ericksen--Leslie equations \eqref{eqn:eldyn}, which is a more difficult task than for the Ginzburg--Landau problem \eqref{eqn:eldyn+gl} because the sphere constraint $|\boldsymbol d| = 1$ is difficult to be fulfilled at the discrete level.
\citeasnoun{badia2011finite} developed a linear semi-implicit algorithm which is unconditionally stable for both the Ginzburg--Landau problem and the Ericksen--Leslie problem, and it does not involve nonlinear iterations.
\citeasnoun{guillen2015splitting} proposed a two sub-step viscosity-splitting time scheme for \eqref{eqn:eldyn+lm}, which is a fully decoupled linear scheme from the computational point of view.
The first sub-step couples diffusion and convection terms whereas the second one is concerned with diffusion terms and constraints.
Some numerical experiments in 2D domains are carried out by using only linear finite elements in space, confirming numerically the viability and the convergence of this scheme.

\citeasnoun{walkington2011numerical} considered numerical approximation of the flow of LCs governed by the Ericksen--Leslie equations.
Care should be taken to develop numerical schemes which inherit the Hamiltonian structure of these equations and associated stability properties.
\citeasnoun{zhao2016energy} developed a first-order and a second-order, coupled, energy stable numerical schemes for a modified Ericksen--Leslie hydrodynamic model, which can reduce to \eqref{eqn:eldyn} with the omission of some terms.
Two ways are presented to develop decoupled schemes for the model and the energy stability is shown as well.
Prediction comparisons of the modified model with a reduced model are made, demonstrating quite different but more realistic orientational dynamics in flows of NLCs.
\citeasnoun{chen2016kinematic} investigated the kinematic transports of the defects in the NLC system by numerical experiments with difference schemes and the semi-implicit scheme which is more efficient although it does not preserve discrete energy relation.
The presented development and interaction of the defects are partly consistent with the observation from the experiments.

Simulating the rise of Newtonian drops in a NLC is a computational challenge because of the numerical difficulties in handling moving and deforming interfaces as well as the complex rheology of the NLCs.
The anisotropic rheology of the LC can be represented by the Ericksen--Leslie theory, regularized to permit topological defects.
\citeasnoun{zhou2007rise} simulated the rise of Newtonian drops in a NLC parallel to the far-field molecular orientation by computing the moving interface in a diffuse interface framework.
The numerical results revealed interesting coupling between the flow field and the orientational field surrounding the drop, especially the defect configuration.
For example, drops with planar anchoring on the surface rise faster than those with homeotropic anchoring due to the viscous anisotropy of the nematic.
With both types of anchoring, the drag coefficient of the drop decreases with increasing Ericksen number as the flow-alignment of the
nematic orientation reduces the effective viscosity of the LC.

LC droplets immersed in another fluid matrix or another liquid droplet immersed in LC matrices have many  interesting technological applications \cite{yue2004diffuse}.
Based on an energetic variational approach, \citeasnoun{yue2004diffuse} derived a phase-field theory for immiscible mixtures of NLCs and viscous fluids.
A novel phase transition mechanism is implemented to couple the NLC phase with the viscous fluid phase to arrive at the dissipative hydrodynamic model for incompressible fluid mixtures.
In \citeasnoun{zhou2007rise}, a decoupled, linear scheme for a simplified version of the phase field model, as well as a coupled, nonlinear scheme for the full model, are developed and shown as unconditionally energy stable with consistent discrete dissipative energy laws.
The effectiveness of the presented numerical examples show the effectiveness of the new model and the developed numerical schemes.

A number of models for smectic phase LCs have been developed and studied during the last two decades, and we take one example of smectic LC hydrodynamics here.
\citeasnoun{chen2017second} considered the numerical approximations for solving a particular hydrodynamics coupled smectic-A model developed by \citeasnoun{weinan1997nonlinear}.
The model, which is derived from the variational approach of the modified Oseen--Frank energy, is a highly nonlinear system that couples the incompressible Navier--Stokes equations and a constitutive equation for the layer variable, but appears to be the minimal model of unknowns.
The director field is assumed to be strictly equal to the gradient of the layer and thus the total free energy is reduced to a simplified version with only one order parameter.
Rather than imposing nonconvex constraint directly on the gradient of the layer variable, the free energy is modified by adding a Ginzburg--Landau type penalization potential, which is a commonly used technique in LC theory.
Two linear, second order time marching schemes, which are unconditionally energy stable, are developed with the Invariant Energy Quadratization method for nonlinear terms in the constitutive equation, the projection method for the Navier--Stokes equations, and some subtle implicit-explicit treatments for the convective and stress terms.
Various numerical experiments are presented to demonstrate the stability and the accuracy of the numerical schemes in simulating the dynamics under shear flow and the magnetic field.

\subsection{Tensor models}
In recent years, more complex problems of modelling the interaction of flow and molecular orientation in a LCs are studied.
The hydrodynamic $\mathbf{Q}$-tensor model has been applied to the flows of LCs and LC polymers.
In particular, the nondimensionalized hydrodynamic $\mathbf{Q}$-tensor model of NLCs
\begin{equation}\label{eqn:qdyn}
\left\{
\begin{array}{l}
\partial_t \mathbf{Q}+\boldsymbol{u}\cdot \nabla \mathbf{Q}-\mathbf{S}(\nabla\boldsymbol{u},\mathbf{Q})=M \mathbf{H},\\
\nabla \cdot\boldsymbol{u}=0,\\
\partial_t \boldsymbol{u}+\boldsymbol{u}\cdot\nabla \boldsymbol{u}=-\nabla p +\eta \Delta \boldsymbol{u}+\nabla\cdot \Sigma-\mathbf{H}\nabla\mathbf{Q}.
\end{array}
\right.
\end{equation}
can be derived from a variational approach together with the generalized Onsager principle.
The molecular field $\mathbf{H}$, which provides the driving motion, is related to the derivative of the free energy.
More details are referred to \citeasnoun{BerisEdwards}, \citeasnoun{wang2002hydrodynamic}.
\citeasnoun{zhao2016semi} developed the first-order and second-order coupled energy stable numerical schemes for the $\mathbf{Q}$-tensor based hydrodynamic model of NLC flows.
The first-order coupled scheme is extended to a decoupled scheme, which is energy stable as well.
With the fully coupled schemes implemented in 2D space and time, defect dynamics for flow of NLCs in a channel is studied.
The developed methodology also provides a paradigm for developing energy-stable schemes for general hydrodynamic models of complex fluids with an energy dissipation law.
Furthermore, \citeasnoun{zhao2017novel} developed a second-order semi-discrete scheme in time, which is linear and unconditionally energy stable at the semi-discrete level, to solve the governing system of equations by following the novel `energy quadratization' strategy.
Several numerical examples are presented to demonstrate the usefulness of the model and the effectiveness of the numerical
scheme in simulating defect dynamics in flows of LCs.
\citeasnoun{denniston2001lattice} described a lattice Boltzmann algorithm to simulate LC hydrodynamics in terms of a tensor order parameter, including a molecular field that provides the driving motion.
A lattice Boltzmann algorithm was described in detail to simulate LC hydrodynamics.
Backflow effects and the hydrodynamics of topological defects are naturally included in the simulations, as are non-Newtonian flow properties such as shear thinning and shear banding.

\citeasnoun{ramage2016computational} considered the NLC in a spatially inhomogeneous flow with a second-rank alignment tensor, whose evolution is determined by two coupled equations: a generalized Navier--Stokes equation for the flow, and a convection-diffusion type equation for the alignments.
A specific model with three viscosity coefficients allows the contribution of the orientation to the viscous stress to be cast in the form of an orientation-dependent force, which effectively decouples the flow and orientation to circumvent the fully coupled problem.
A time-discretized strategy for solving the flow-orientation problem is illustrated using the Stokes flow in a lid-driven cavity.

Dispersing colloidal particles into LCs provides an approach to build a novel class of composite materials with potential
applications.
Many researches have been devoted to the hydrodynamic equations governing their dynamical evolution to understand the physics and dynamical properties of such colloid LC composites.
\citeasnoun{foffano2014dynamics} provided an overview of theoretical understanding of the hydrodynamic properties of such materials from computer simulations, including a single particle and two particles forming a dimer and dispersion. A number of open questions in the dynamics of colloid LC composites are also raised.

\subsection{Molecular models}
For the liquid crystalline polymer (LCP) model, the Doi--Hess theory by \citeasnoun{doi1988theory} and \citeasnoun{hess1976fokker} is the simplest and the most studied model.
Many interesting dynamics have been found in rod-like nematic LCPs in a shear flow.
\citeasnoun{li2004analysis} considered the stochastic model of concentrated LCPs in the plane Couette flow, where the flow is one dimensional while the configuration variable of the rod is restricted to the circle, and presented the local existence and uniqueness theorem for the solution to the coupled fluid-polymer system.
\citeasnoun{ji2008kinetic} considered the extended Doi model for nematic LCPs in the planar shear flow and studied the formation
of microstructure and the dynamics of defects.
The Fokker--Plank equation is discretized with the spherical harmonic spectral method and multiple flow modes are replicated in the simulations.
A comparison among complete closure models, Bingham closure models and kinetic models is given in \citeasnoun{ji2008kinetic} as well.

\citeasnoun{yu2007kinetic} presented a new kinetic hydrodynamic coupled model for the dilute LCP solution for inhomogeneous flow, which accounts for translational diffusion and density variation.
The coupled kinetic hydrodynamic model is a combination of an extended Doi kinetic theory for rigid rod-like molecules and the Navier--Stokes equation for incompressible flow.
They studied the microstructure formation and defects dynamics arising in LCPs in plane shear flow with mass conservation of LCPs in the whole flow region.
The LCP molecular director is restricted in the shear plane, and LCP molecules are ensured anchoring at the boundary by an additional boundary potential.
Plane Couette flow and Poiseuille flow were studied with a second-order difference scheme and the fourth-order Runge--Kutta method
for time-stepping.
Their numerical results in plane Couette flow predicted seven in-plane flow modes, four of which have been reported by \citeasnoun{rey1998recent}.
In plane Poiseuille flow, the micro-morph is quasi-periodic in time when flow viscosity and molecular elasticity are comparable.
Different local states, such as flow-aligning, tumbling or wagging, arise in different flow regions.
To describe the microstructures and defect dynamics of LCP solutions, \citeasnoun{yu2010nonhomogeneous} later purposed a general nonhomogeneous extension of the Doi's kinetic theory with translational diffusion and nonlocal potential.
A reduced second-order moment model for isotropic long-range elasticity was obtained as a decent tool for numerical simulations of defect dynamics and texture evolution.
Their numerical results of in-plane rotational case show that this model qualitatively predicts complicated nonhomogeneous director dynamics under moderate nematic potential strength, and the translational diffusion plays an important role in defect dynamics.

\section{Numerical methods for computing transition pathways and solution landscape of liquid crystals}
NLCs often exhibit a number of ordered phases. When an ordered phase is metastable, phase transition proceeds via nucleation and growth under thermal fluctuations or external perturbations. Such transition is often called as a rare event, which can be characterized by a long waiting period around one local metastable state and followed by a sudden jump over the transition state, to another stable state \cite{hanggi1990reaction}. Because the transition state is an unstable critical point of the LC free energy, i.e., an index-1 saddle point with a single unstable direction that connects two local minima, it is of great challenge to accurately compute the transition states and the transition pathways.

In this section, we will first review the numerical methods to compute the transition states and transition pathways. Then we will introduce a novel solution landscape, which describes a pathway map consisting of all stationary points (i.e. extrema and saddle points) and their connections.

\subsection{Transition pathways}

There are two popular approaches to find the saddles points and transition pathways. One is the class of surface-walking methods for finding saddle points starting from a single state, such as gentlest ascent dynamics \cite{weinan2011gentlest} and dimer-type method \cite{henkelman1999dimer,zhang2012shrinking}. The other approach is categorised as path-finding methods for computing minimal energy path (MEP) that involves two end states, including nudged elastic band method \cite{jonsson1998nudged} and string method\cite{weinan2002string}.

In the case when the object of interest is the most probable transition path between metastable/stable states of the smooth potential
energy, it is known that for overdamped Langevin dynamics the most probable path for the transition is the MEP.
The string method serves this purpose well, which was first proposed by E, Ren and Vanden-Eijnden \cite{weinan2002string} and has some variants \cite{weinan2007simplified,du2009constrained,ren2013climbing,samanta2013optimization}. The string method proceeds by evolving a string $\phi$ with intrinsic parametrization in the configuration space by using the steepest decent dynamics to the MEP:
\begin{equation}
    \phi_t=- \nabla F^{\perp}(\phi)+\lambda\mathbf{\tau},
        \label{string}
\end{equation}
where $\nabla F^{\perp}(\phi)$ is the component of $\nabla F$ normal to $\phi$, $\mathbf{\tau}$ denotes the unit tangent vector to $\phi$, and $\lambda$ is the Lagrange multiplier to impose the equal arc-length constraint.

In the numerical algorithm, the string $\phi$ is discretized to $N$ nodes $\{\mathbf{Q}_i^n, i = 1,..,N\}$, where $n$ represents the iteration step.
The string method adopts a time splitting scheme via the following two-step procedure:\\
{\it Step 1: string evolution:} The current path $\{\mathbf{Q}_i^n, i = 1,..,N\}$ is updated by following the gradient decent direction :
\begin{equation} \label{eq:step1}
    \bar{\mathbf{Q}}^{n+1}_i = \mathbf{Q}^{n}_i-\alpha_i\nabla F^{\perp}[\mathbf{Q}^{n}_i],\quad i=1,2, \cdots, N,
\end{equation}
where $\alpha_i$ is a stepsize that is chosen as a constant or determined by the linesearch.

{\it Step 2: string reparametrization:} Redistributes the nodes $\bar{\mathbf{Q}}_i^{n+1}$ to obtain a new path $\mathbf{Q}^{n+1}_i$ by enforcing the equal arc length parametrization. One can calculate the arc length corresponding to the current nodes $\bar{\mathbf{Q}}_i^{n+1}$  and then use a linear or cubic spline interpolation to find $\mathbf{Q}^{n+1}_i$ by the equal arc length.

Parametrization by equal arc length gives a good accuracy for the MEP, but not for the transition state. To achieve the better estimation of the transition state and the energy barrier, one way is to use energy-weighted arc length so that finer resolution can be achieved around the transition state \cite{weinan2007simplified}. If the final minima is far from the transition state and too many nodes are needed for the whole MEP, the free-end nudged elastic band method can be used to obtain subtle depiction around the transition state by allowing the last node not to be a minimum \cite{zhu2007interfacial}.

To improve the accuracy of both MEP and transition state without using too many nodes, in \cite{han2019transition}, the authors proposed a multi-scale scheme of the string method by applying a global coarse string and a local fine string. The global coarse string is used to obtain the whole MEP by connecting two minima, and then the local fine string is utilized to compute the local MEP by choosing two nodes with the highest energies on the global coarse string as the two ends for the case of single energy barrier. Both strings follow the same two-step procedure described above to make the implementation straightforward, and better estimation of the transition state can be achieved by the local fine string. They implement the multi-scale string method on numerical simulations of the transition pathway in three-dimensional cylinder between PR and PP with and without axial invariance, between ER and PP or ERRD for different values of temperature and radius of cylinder.

\begin{figure}
    \centering
        \includegraphics[width=0.9\columnwidth]{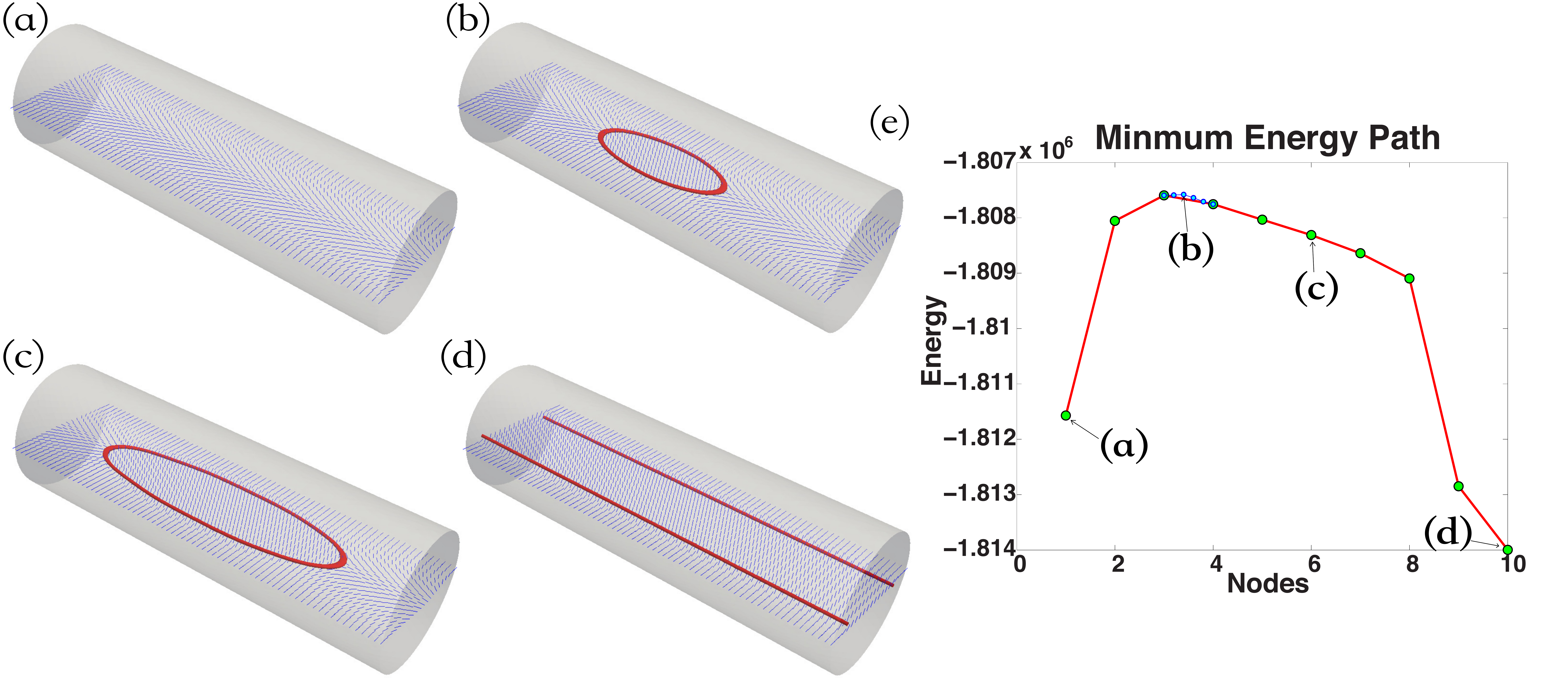}
    \caption{(Credit: \protect\cite{han2019transition}) Transition pathway from ER to PP. (a-d) four states along the MEP (e), corresponding to the ER (a), the transition state (b), an intermediate state (the 6th node) (c),  and the PP (d), respectively. In the MEP(e), the red line with green beads is the global coarse string and the blue line with blue beads is the local fine string.}
    \label{ERPP}
\end{figure}

For example, the transition pathway between ER and PP for fixed temperature and radius of cylinder is shown in Fig. \ref{ERPP}. A pair of +1/2 disclination lines starts to form in the middle of the cylinder. These two disclination lines appear to join their both ends, hence forming a disclination ring (Fig. \ref{ERPP}(b)). The width of the ring is increasing when less than the distance between defects of stable 2D PP layer. Finally, the ring is broken by the top and bottom boundary and the line defect straightens to PP line defect configuration (Fig. \ref{ERPP}(d)). The local fine string in Fig. \ref{ERPP} finds a more accurate transition state than the global coarse string.

\subsection{Saddle points}
The classical transition state theory describes a sufficiently accurate transition process for the systems with smooth
energy landscapes. On the energy landscape, the transition state is a saddle point with the lowest energy that connects two
neighbouring local minima, i.e., index-1 saddle point. As long as the transition state is located, one can calculate the MEP using gradient flow from the transition state perturbed along both sides of its unstable direction.

Extensive surface walking methods have been proposed to search saddle points. A key character of such methods is to
perform a systematic search for a saddle point starting from a given initial state, without knowing the final states. In this subsection, we focus
on a typical class of surface walking methods, the so-called minimum mode following methods, where only the lowest
eigenvalue and the corresponding eigenvector of the Hessian are needed and subsequently used together with the energy
gradient (often referred as the force) to compute transition states. Besides the eigenvector-following method by \citeasnoun{cerjan1981finding}, the trajectory following algorithm by \citeasnoun{vincent1992trajectory}, the activation-relaxation technique by \citeasnoun{mousseau1998traveling}, and the step and slide method by \citeasnoun{miron2001step} are also some samplers of surface-walking methods, and we refer to \citeasnoun{vanden2010transition} and \citeasnoun{zhang2016recent} as some excellent reviews.

In \citeasnoun{weinan2011gentlest}, the authors proposed the gentlest ascent dynamics, which is a dynamical formulation of the gentlest ascent method by \citeasnoun{crippen1971minimization}. It refers to the following dynamical system
\begin{equation}\label{eqn:gad}
\left\{
\begin{aligned}
\dot{x}&=-\nabla E(x)+2\dfrac{\langle\nabla E(x),x\rangle}{\langle v,v\rangle}v,\\
\dot{v}&=-\nabla^2 E(x)v+\dfrac{\langle v, \nabla^2 E(x) v\rangle}{\langle v,v\rangle}v,
\end{aligned}
\right.
\end{equation}
where $E(\cdot)$ repersents the energy functional.
Besides the position variable $x$, another variable $v$ represents the unstable direction.
The dynamics \eqref{eqn:gad} can drive $x$ climb out of the basin of attraction by following the ascent direction $v$, which is the eigenvector that corresponds to the smallest eigenvalue of its Hessian, to find the saddle point. It was proved in \cite{weinan2011gentlest} that the stable fixed points of this dynamical system are precisely the index-1 saddle points.

To avoid the calculation of the Hessian, a dimer method was developed using only first derivatives by \citeasnoun{henkelman1999dimer}.
Specifically, a dimer with two points is placed at the current position to compute the Hessian-vector multiplication with a finite difference scheme. The dimer method proceeds by alternately performing the rotation step for finding the lowest eigenmode and the translation step by using the modified force with either the steepest descent algorithm or the conjugate gradient method to search the saddle point.
Later, \citeasnoun{zhang2012shrinking} proposed a shrinking dimer dynamics by introducing a dynamic reduction of the dimer length, and proved linear stability and convergence results for such dynamics.
To accelerate the convergence and further improve the efficient of the dimer-type methods, \citeasnoun{zhang2016optimization} developed an optimization-based shrinking dimer (OSD) method by establishing a minimax optimization for the saddle searching problem and finding the unstable direction in an optimization framework. All these methods can be applied to the LC models for identifying the transition states accurately \cite{han2019transition}.

Besides the investigations of the minima and index-1 saddle points on the energy landscape of NLCs in confinement, there is substantial recent interest in the high-index saddle points of the LdG free energy, which are stationary points of the NLC free energy.
The Morse index of a saddle point is the maximal dimension of a subspace on which its Hessian is negative definite, which is equal to the number of negative eigenvalues of its Hessian \cite{milnor1969morse}.
For example, a stable stationary point has index $0$ and a transition state is an index-$1$ saddle point.
In recent years, powerful numerical algorithms have been developed to find multiple solutions of nonlinear equations, including the gradient-square-minimization method \cite{Angelani2010saddles}, the minimax method \cite{li2001minimax}, the deflation technique \cite{farrell2015deflation}, and the homotopy method \cite{mehta2011finding,hao2014bootstrapping}.
Despite substantial progress in this direction, it is still a numerical challenge to systematically find saddle point solutions, especially those with high indices, of nonlinear systems of partial differential equations such as the Euler-Lagrange equation.
In \citeasnoun{yin2019high}, the authors proposed a high-index saddle dynamics (HiSD) to efficiently compute the saddle points of any index (including minima) for the LdG energy \cite{han2020SL}.

For a non-degenerate index-$k$ saddle point $\hat{\mathbf{x}}$, the Hessian $\mathbb{H}(\mathbf{x})=\nabla^2 E(\mathbf{x})$ at $\hat{\mathbf{x}}$ has exactly $k$ negative eigenvalues $\hat{\lambda}_1\leqslant \cdots \leqslant\hat{\lambda}_k$ with corresponding unit eigenvectors $\hat{\mathbf{v}}_1,\ldots , \hat{\mathbf{v}}_k$ satisfying $\big\langle\hat{\mathbf{v}}_j, \hat{\mathbf{v}}_i\big\rangle = \delta_{ij}$, $1\leqslant i, j \leqslant k$.
Define a $k$-dimensional subspace $\hat{\mathcal{V}}=\mathrm{span}\big\{\hat{\mathbf{v}}_1,\ldots, \hat{\mathbf{v}}_k\big\}$, then $\hat {\mathbf{x}}$ is a local maximum on a $k$-dimensional linear manifold $\hat{\mathbf{x}}+\hat{\mathcal{V}}$ and a local minimum on $\hat{\mathbf{x}}+\hat{\mathcal{V}}^\perp$, where $\hat{\mathcal{V}}^\perp$ is the orthogonal complement space of $\hat{\mathcal{V}}$.

The HiSD dynamics for a $k$-saddle ($k$-HiSD) is given as follows:
\begin{equation}\label{dynamics}
\left\{
\begin{aligned}
    \beta^{-1}\dot{\mathbf{x}}   & =- \left(\mathbf{I}-2\sum_{j=1}^{k}\mathbf{v}_j \mathbf{v}_j^\top\right)\nabla E(\mathbf{x}), \\
    \gamma^{-1}\dot{\mathbf{v}}_i & = -\left(\mathbf{I}-\mathbf{v}_i\mathbf{v}_i^\top-2\sum_{j=1}^{i-1}\mathbf{v}_j \mathbf{v}_j^\top\right)\mathbb{H}(\mathbf{x})\mathbf{v}_i,\; i=1,\ldots,k,\\
\end{aligned}
\right.
 \end{equation}
where the state variable $\mathbf{x}$ and $k$ direction variables $\mathbf{v}_i$ are coupled, $\mathbf{I}$ is the identity operator and $\beta,\gamma>0$ are relaxation parameters.
The $k$-HiSD dynamics \eqref{dynamics} is coupled with an initial condition:
\begin{equation}\label{othonormal}
\mathbf{x}(0)= \mathbf{x}^0\in\mathbb{R}^n,\quad \mathbf{v}_i(0)=\mathbf{v}_i^0\in\mathbb{R}^n,  i=1, \ldots, k,
\end{equation}
where $\mathbf{v}_1^0,\ldots,\mathbf{v}_k^0$ satisfy the orthonormal condition $\left\langle\mathbf{v}_i^0,\mathbf{v}_j^0\right\rangle = \delta_{ij}$, $i,j=1,2,\ldots,k$.
The first equation in \eqref{dynamics} describes a transformed gradient flow, which allows $\mathbf{x}$ to move along ascent directions on the subspace $\hat{\mathcal{V}}$ and descent directions on the subspace  $\hat{\mathcal{V}}^\perp$.
The second equation in \eqref{dynamics} is used to search for an orthonormal basis of $\hat{\mathcal{V}}$. Because the Hessian $\mathbb{H}(\mathbf{x})$ is self-adjoint in a gradient system, we can simply take $\mathbf{v}_i$ as a unit eigenvector corresponding to the $i$th smallest eigenvalue of $\mathbb{H}(\mathbf{x})$, which can be obtained from a constrained optimization problem,
\begin{equation}\label{minvi}
\min_{\mathbf{v}_i\in\mathbb{R}^n} \quad\langle \mathbb{H}(\mathbf{x})\mathbf{v}_i, \mathbf{v}_i\rangle,\qquad
\mathrm{s.t.} \quad\langle \mathbf{v}_j,\mathbf{v}_i\rangle=\delta_{ij},\quad j=1,2,\ldots,i.
\end{equation}
Then we minimize the $k$ Rayleigh quotients \eqref{minvi} simultaneously by solving the second equation in \eqref{dynamics}.
Furthermore, one can apply the locally optimal block preconditioned conjugate gradient (LOBPCG) method \cite{knyazev2001toward} to calculate the smallest $k$ eigenvalues and $\mathbf{v}_1,...,\mathbf{v}_k$.

\subsection{Solution landscape of liquid crystals}
Solution landscape is a novel concept, which is defined as a pathway map consisting of all stationary points and their connections, which presents a hierarchy structure to show how minima are connected with index-1 saddle points, and how lower-index saddle points are connected to higher-index ones, finally to a parent state, i.e., the highest-index saddle point \cite{yin2020construction}. Solution landscape not only advances the understanding of the relationships between the minima and the transition states on the energy landscape, but also provides a full picture of the entire family of stationary points in both gradient systems and dynamical systems \cite{yu2020GHiOSD}.

To construct a solution landscape, one needs to apply HiSD and follow two algorithms,
downward search to find all connected low-index saddles from a high-index saddle and
upward search to find a connected high-index saddle, which drive the entire search to navigate up and down on the energy landscape \cite{yin2020construction}.

{\it Downward search algorithm}: Given an index-$m$ saddle point $\hat{\mathbf{x}}$ and $m$ unit eigenvectors $\hat{\mathbf{v}}_1,\ldots,\hat{\mathbf{v}}_m$ corresponding to the $m$ negative eigenvalues $\hat{\lambda}_1\leqslant \ldots\leqslant\hat{\lambda}_m$ of the Hessian $\mathbb{H}(\hat{\mathbf{x}})$ respectively, we search for a lower index-$k$ ($k<m$) saddle point using HiSD dynamics \eqref{dynamics}.
For the initial condition, we choose $\mathbf{x}(0) = \hat{\mathbf{x}}\pm\varepsilon \mathbf{u}$  for $\mathbf{x}$, where
we perturb $\hat{\mathbf{x}}$ along a direction $\mathbf{u}$ with a small $\varepsilon$.
The direction $\mathbf{u}$ can be chosen as $\hat{\mathbf{v}}_i$ or a linear combination of $(m-k)$ vectors in the set of unstable directions $\{\hat{\mathbf{v}}_{k+1},\ldots,\hat{\mathbf{v}}_{m}\}$, whose negative eigenvalues have the smallest magnitudes.
The other $k$ eigenvectors $\hat{\mathbf{v}}_1,\ldots,\hat{\mathbf{v}}_k$ are the initial unstable directions $\mathbf{v}_i(0)$.
A typical choice of initial conditions in a downward search is $(\hat{\mathbf{x}}\pm\varepsilon\hat{\mathbf{v}}_{k+1}, \hat{\mathbf{v}}_1,\ldots,\hat{\mathbf{v}}_k)$.
Normally, a pair of index-$k$ saddles can be found, corresponding to the $\pm$ sign of the initial searching direction.

{\it Upward search algorithm}: We can also search a high index-$k$ saddle from a low index-$m$ saddle $\hat{\mathbf{x}}$ ($m<k$) by using the HiSD method.
Starting at an index-$m$ saddle $\hat{\mathbf{x}}$, to search for a high index-$k$ saddle,
$(k-m)$ other unit eigenvectors $\hat{\mathbf{v}}_{m+1},\ldots,\hat{\mathbf{v}}_k$ corresponding to the smallest $k-m$ positive eigenvalues of the Hessian $\mathbb{H}(\hat{\mathbf{x}})$ are needed.
The initial state $\mathbf{x}(0)$ is set as $\hat{\mathbf{x}}\pm\varepsilon\mathbf{u}$ where $\mathbf{u}$ is a linear combination of $\{\hat{\mathbf{v}}_{m+1},\ldots,\hat{\mathbf{v}}_k\}$, and a typical initial condition for $k$-HiSD in an upward search is $(\hat{\mathbf{x}}\pm\varepsilon\hat{\mathbf{v}}_{k}, \hat{\mathbf{v}}_1,\ldots,\hat{\mathbf{v}}_k)$.

Each downward or upward search represents a pseudodynamics between a pair of saddle points, which presents valuable insights into transition pathways between stable and unstable solutions and the corresponding energy barriers.
By repeating downward search and upward search, we are able to systematically find saddle points of all indices and uncover the complex connectivity of the solution landscape.

In \citeasnoun{yin2020construction}, the authors demonstrated the success of the solution landscape and applied the LdG model to construct the pathway maps of 2D NLCs confined in a square domain with tangent boundary conditions.
One of the technical advantages of this method is to produce the entire family tree under a parent state.

\begin{figure}
\begin{center}
\includegraphics[width=\columnwidth]{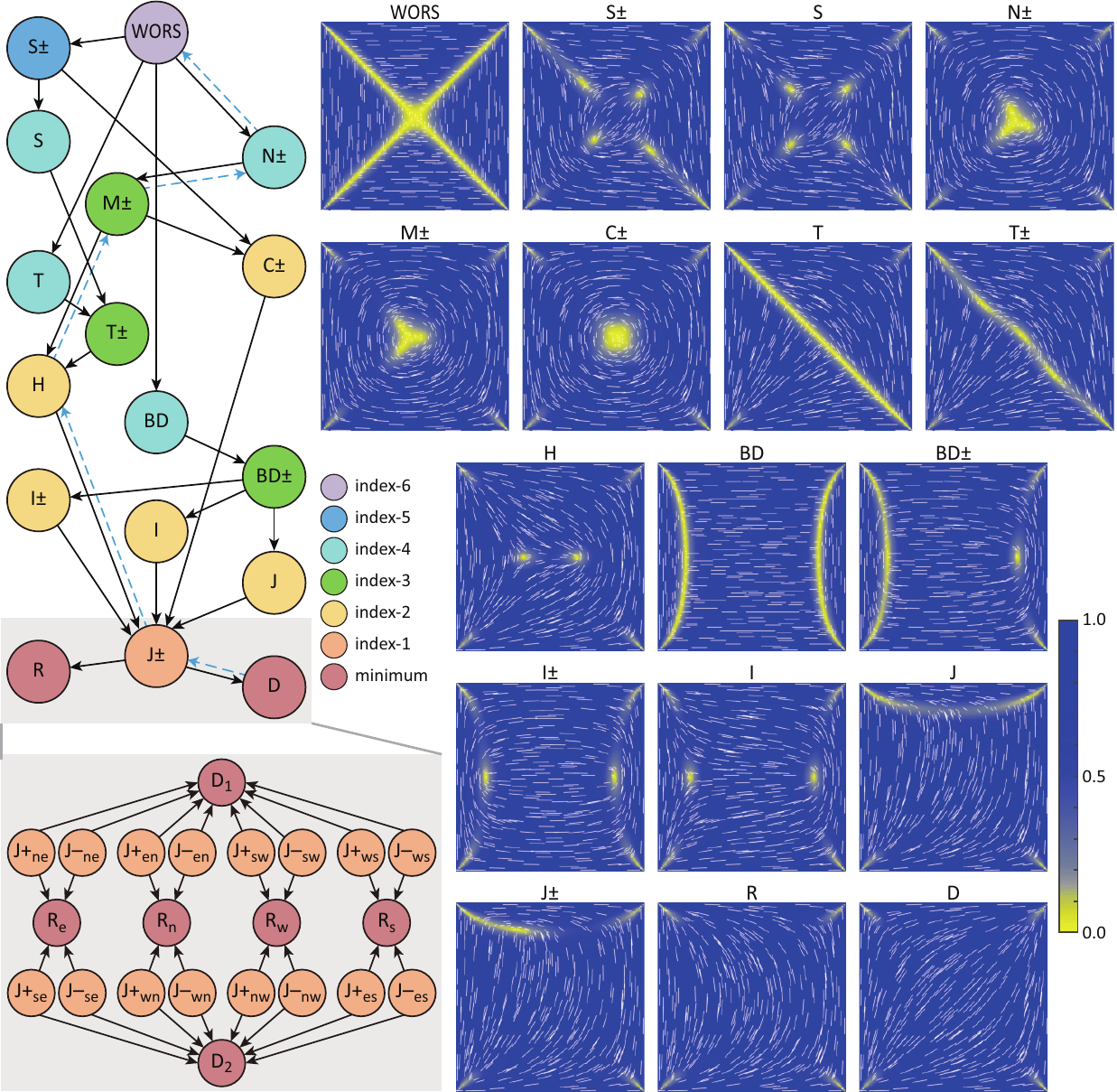}
\caption{(Credit: \protect\cite{yin2020construction}) Solution landscape of NLC confined on a square at the domain size $\lambda^2 = 50$.}
\label{WORS}
\end{center}
\end{figure}

For small nano-scale square domains, it is known that the well order reconstruction solution (WORS) is the unique solution with isotropic diagonals and is the global energy minimum in this asymptotic regime \cite{kralj2014order,canevari2017order}. Moreover, the existence of WORS can be proved in arbitrary sized nematic wells \cite{majumdar2016front}. By studying the second variation of LdG free energy numerically, \citeasnoun{wang2019order} shows that the WORS is a high-index saddle point in a large well.
In Figure \ref{comparison} (a), the Morse index of the WORS increases with the domain size and the WORS is always the parent state in the solution landscapes on a square domain.
Intuitively, this is because the length of the diagonal defect lines increases as the domain size increases, and thus the WORS has an increasing number of unstable directions and an increasing Morse index with the increasing square edge length.
Using WORS as the parent state (the highest-index saddle) and the HiSD method \cite{yin2019high}, \citeasnoun{yin2020construction} constructed a solution landscape on the square domain in Figure \ref{WORS} and reported novel saddle point solutions with multiple interior defects, such as $N\pm$, $M\pm$, $S\pm$, and $T\pm$, that were not reported in the existing literature . Besides these new solutions, solution landscape provides rich information of dynamical pathways. For example, two movies, a dynamic downward pathway sequence and an upward pathway sequence in Figure \ref{WORS}, are shown in \citeasnoun{yin2020construction} to demonstrate the hidden physical processes and transitions on the complicated energy landscape of NLCs.

\begin{figure}
\begin{center}
\includegraphics[width=\columnwidth]{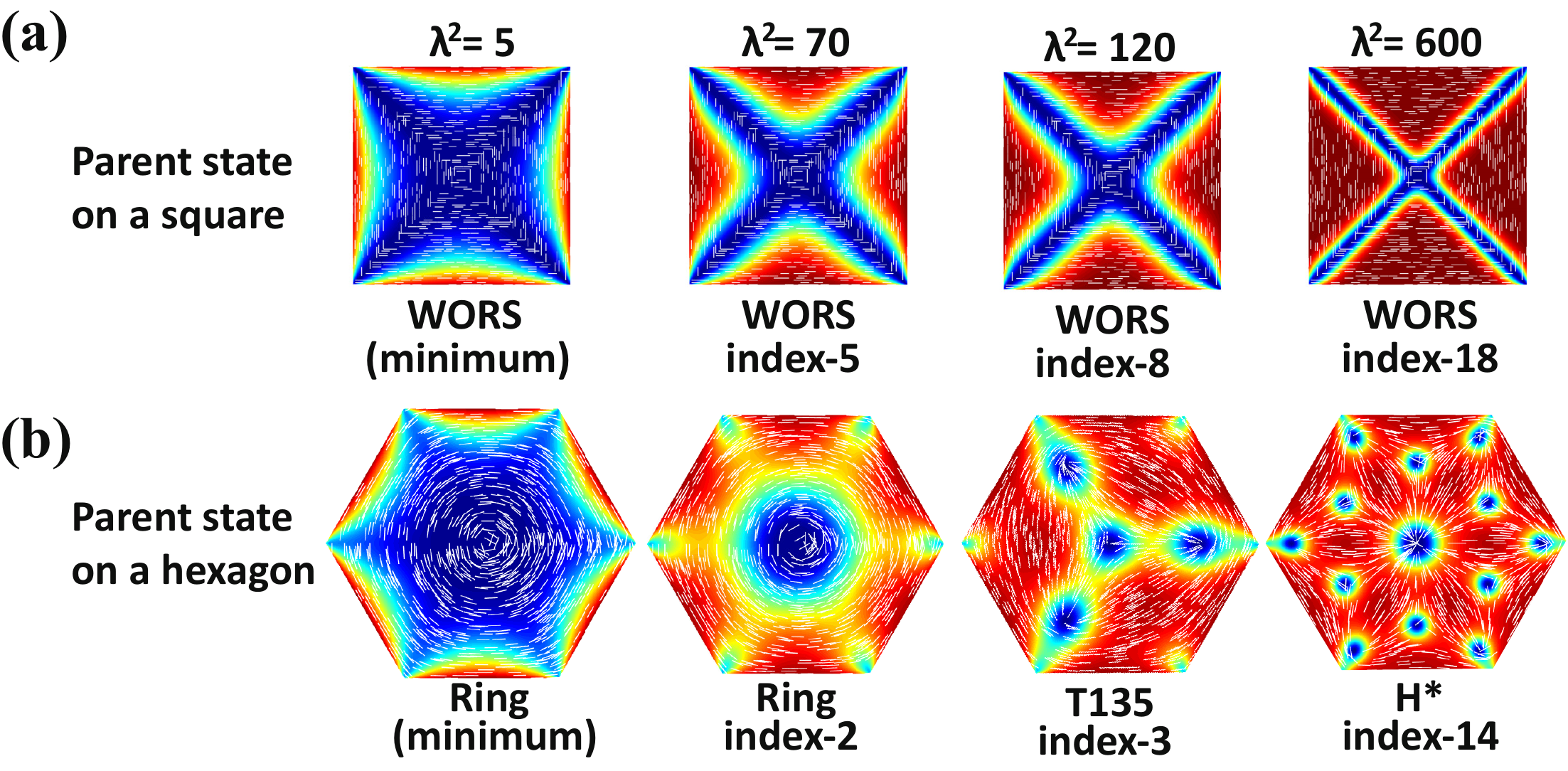}
\caption{(Credit: \protect\cite{han2020SL}) Comparison of the parent states of the solution landscapes on the square (a) and the hexagon (b).
The domain size $\lambda^2 = 5, 70, 120$, and $600$, respectively.}
\label{comparison}
\end{center}
\end{figure}

Solution landscape of the NLC system can be affected by many factors, such as material properties, external fields, temperature, boundary conditions, domain size and shape, etc.
Compared to a square domain, hexagon is a generic regular polygon with an even number of sides.
In \citeasnoun{han2020SL}, the authors investigated the solution landscapes of NLCs confined in a hexagon with tangent boundary conditions. The Ring solution, which is the analogue of the WORS, is a minimizer on a hexagon when the parameter of domain size $\lambda$ is small enough. However, the Ring solution becomes and remains as an index-$2$ saddle-point solution for large $\lambda$, i.e., the Morse index of the Ring solution does not increase with $\lambda$.
By using the HiSD method, the parent states in the solution landscapes of NLCs on a hexagon are found to be the Ring solution, index-$3$ T135 solution, and index-$14$ H* solution when $\lambda^2 = 70, 120$, and $600$ in Figure \ref{comparison} (b) respectively, where T135 and H* solutions emerge through saddle-node bifurcations.

\begin{figure}
\begin{center}
\includegraphics[width=\columnwidth]{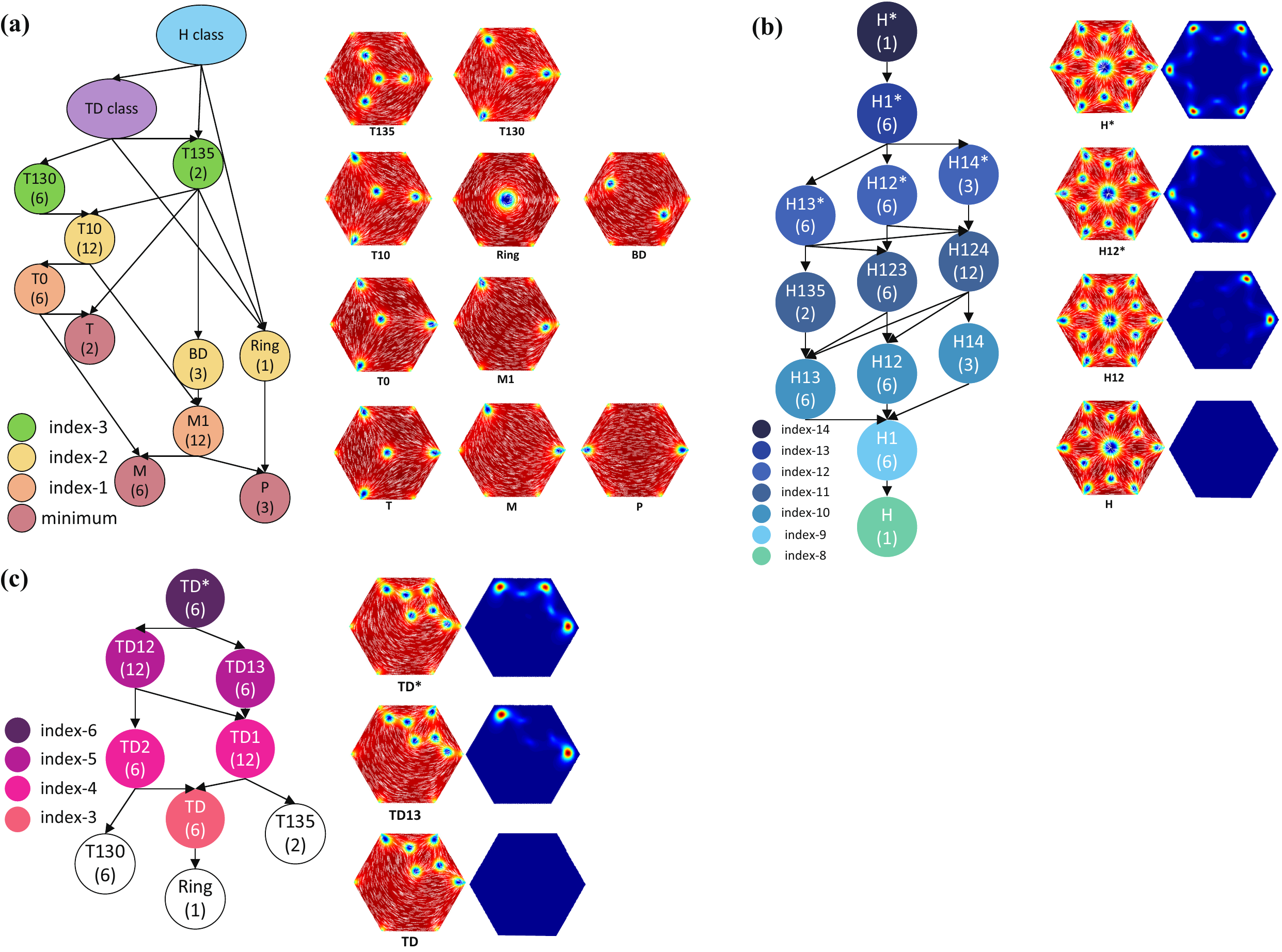}
\caption{(Credit: \protect\cite{han2020SL}) (a) Solution landscape on a hexagon at $\lambda^2 = 600$. The number in the parentheses indicates the number of solutions without taking symmetry into account.
The height of a node approximately corresponds to its energy.
(b) Solution landscape of the H class, and the configurations of sample solutions and the corresponding plots of $|\mathbf{Q}-\mathbf{Q}^H|$, where $\mathbf{Q}$ is the sample solution in H class, and $\mathbf{Q}^H$ is the index-$8$ H solution.
(c) Solution landscape of the TD class, and the configurations of sample solutions and the corresponding plots of $|\mathbf{P}-\mathbf{P}^{TD}|$, where $\mathbf{P}$ is the sample solution in TD class, and $\mathbf{P}^{TD}$ is the index-$3$ TD solution.}
\label{figure:H}
\end{center}
\end{figure}

In the NLC system confined on a hexagon, the solution landscape is very complicated at $\lambda^2=600$ in Figure \ref{figure:H}(a).
There are three notable numerical findings, including a new stable T solution with an interior $-1/2$ defect, new H and TD classes of saddle point solutions with high symmetry and high indices, and new saddle points with asymmetric defect locations.
Novel H-class solutions with interior point defects have Morse indices ranging from $8$ to $14$, and the connectivity of these solutions is shown in Figure \ref{figure:H}(b), as well as the corresponding configurations and their defect profiles.
The parent state is the index-$14$ H* saddle point solution connecting to the lowest index-$8$ saddle point solution, labelled as H.
The H-class solutions look very similar to each other at first glance and their subtle differences are illustrated by plotting $|\mathbf{Q}-\mathbf{Q}^H|$, where $\mathbf{Q}$ is a critical point solution in H-class and $\mathbf{Q}^H$ is the index-$8$ H solution.
The differences concentrate on the vertices with conspicuous red or white points in the dark blue background (Figure \ref{figure:H}(b)).
The vertex with a pinned +1/3 defect and a splay profile of direction is referred to as a \textit{splay-like vertex}.
The vertex with a $-1/6$ defect and a bend profile of direction is referred to as a \textit{bend-like vertex}.
These conspicuous points are localised near or at the bend-like vertices.
The TD-class solutions, in which TD is an abbreviation for ``triangle double", appear to be a superposition of two Ring solutions on a regular triangle, with two interior $-1/2$ point defects and an interior $+1/2$ point defect.
All saddle points in this class have three defective vertices, either bend-like or splay-like, and the connectivity of this class is shown in Figure \ref{figure:H}(c).
The lowest-index saddle point solution in this class is the index-$3$ TD solution with no bend-like vertices.

\begin{figure}
\begin{center}
\includegraphics[width=\columnwidth]{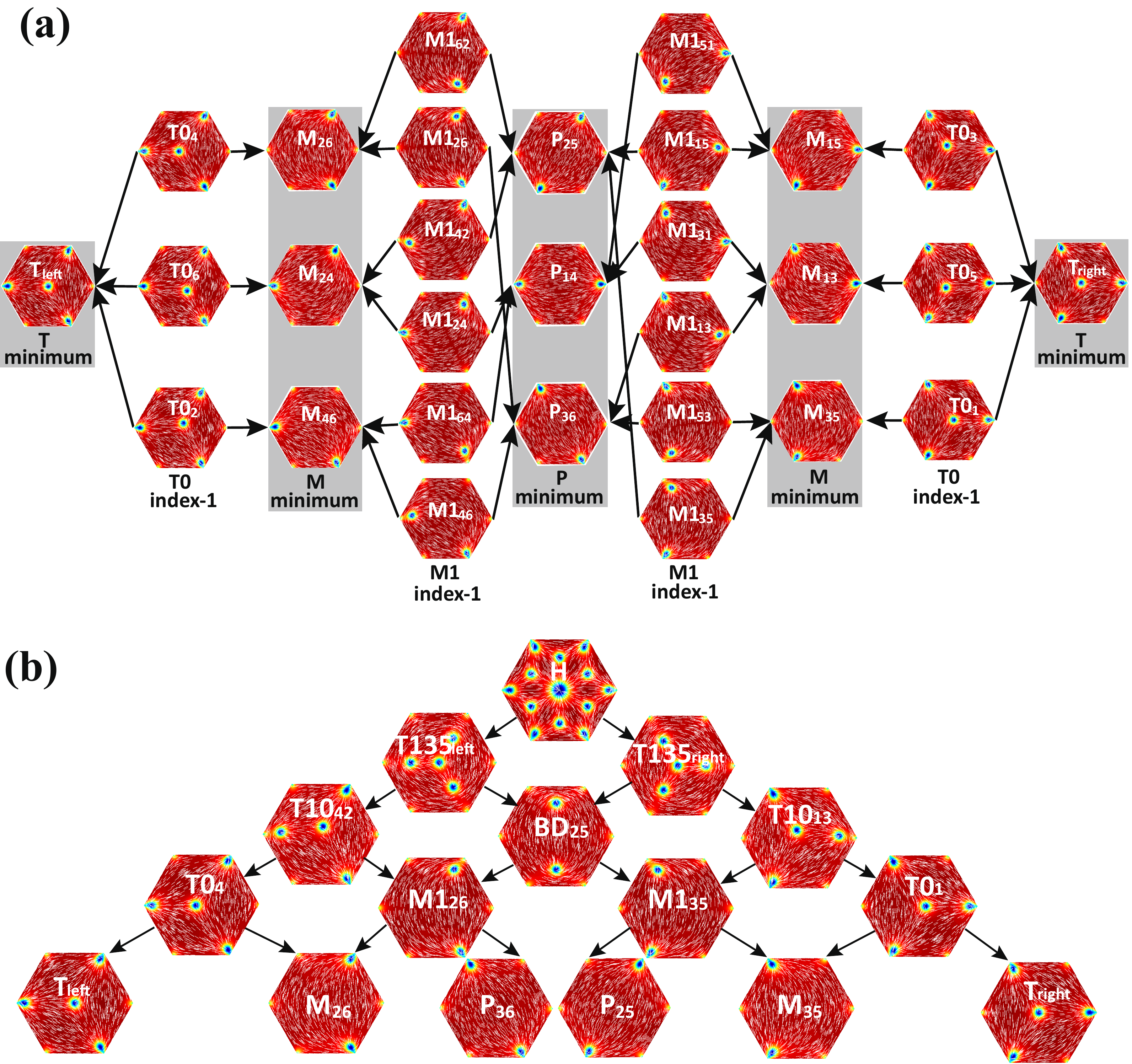}
\caption{(Credit: \protect\cite{han2020SL}) (a) The transition pathways between stable states including two T, six M and three P solutions at $\lambda^2 = 600$. (b) Solution landscape starting from the H solution.
All local minima such as T$_{\mathrm{left}}$, P$_{36}$, M$_{26}$, M$_{35}$, P$_{25}$, and T$_{\mathbf{right}}$ are connected by the index-$8$ H solution.}
\label{600_transition_pathway}
\end{center}
\end{figure}

A further innovative aspect of the solution landscape of NLCs is to provide rich information on dynamical pathways, which include transition pathways with a single transition state, multiple transition states, and dynamical pathways mediated by high-index saddle points. For example, in Figure \ref{600_transition_pathway}(a), the transition pathway between T$_{\mathrm{left}}$ and M$_{26}$ is connected by single transition state T0$_{4}$. On the other hand, one transition pathway between P$_{25}$ and P$_{36}$ can be P$_{25}\leftrightarrow$M1$_{35}\leftrightarrow$M$_{35}\leftrightarrow$M1$_{53}\leftrightarrow$P$_{36}$, connected by two transition states M1$_{35}$ and M1$_{53}$. The ``longest" transition pathway appears to be the pathway between the two T solutions: T$_{\mathrm{left}}$ and T$_{\mathrm{right}}$, despite T$_{\mathrm{left}}$ and T$_{\mathrm{right}}$ are two symmetric solutions related by a $60^{\circ}$ rotation. One switching pathway between T$_{\mathrm{left}}$ and T$_{\mathrm{right}}$ cannot be achieved by T$_{\mathrm{left}}\leftrightarrow$T0$_{4}\leftrightarrow$M$_{26}\leftrightarrow$M1$_{62}\leftrightarrow$P$_{25}\leftrightarrow$M1$_{15}\leftrightarrow$M$_{15}\leftrightarrow$T0$_{3}\leftrightarrow$T$_{\mathrm{right}}$, which indicates the transition between two energetically-close but configurationally-far T solutions has to overcome four energy barriers.

Figure \ref{600_transition_pathway}(b) shows how the different P, M, and T solutions are connected by high-index saddle points.
For example, the index-$1$ M1 connects stable M and P, and the index-$1$ T0 connects stable M and T. Contrarily, two M or two P are connected by the index-$2$ BD. Furthermore, T$_{\mathrm{left}}$ and T$_{\mathrm{right}}$, which are configurationally far away from each other, are connected by an index-$8$ H solution by following HiSD: T$_{\mathrm{left}}$$\leftarrow$T135$_{\mathrm{left}}$$\leftarrow$H$\rightarrow$T135$_{\mathrm{right}}$$\rightarrow$T$_{\mathrm{right}}$.
The index-$8$ H solution is the stationary point in the intersection of the smallest closures of on the energy landscape, which is able to connect every stable solution (two T, three P and six M solutions) through dynamical pathways in Figure \ref{600_transition_pathway}(b).
In \citeasnoun{han2020SL}, the authors deduce that index-$1$ saddle points are efficient for connecting configurationally-close stable solutions.
For configurationally-far stable states, they are generally connected by multiple transition states, or in another way, connected by a high-index saddle point.

\section{Conclusion and future directions}
LCs are fascinating examples of complex fluids that combine fluidity of fluids with the directionality of solids.
During the last decades, the study of LCs has grown tremendously due to the widespread applications in the industry, such as the display devices, the photonic devices and the biological sensors. Modeling, analysis, and computation are indispensable tools for describing, understanding, and predicting the physical phenomena related to the LCs. The topics of LCs land on a number of branches in physics, materials science, and mathematics, forming problems of fundamental importance.

Theoretical studies of LCs are primarily concerned with how topology and geometry of the elements (macromolecules or anisotropic molecules) produce and affect the mesoscopic structure, and how the structure determines the properties of the materials. It is crutial to understand the LC phenomena by building appropriate mathematical models and carry out both theoretical analysis and numerical simulation. Scientific values of LCs include two major aspects:

{\it Raise new mathematical problems}. The study of LCs involves calculus of variation, PDEs, harmonic analysis and computational methods. For example, Onsager model can be expressed as either a variation minimization problem, or a nonlinear nonlocal integral equation or PDE problem. In general, it is impossible to find all analytical solutions for such problem. It is because of the problem with special physical properties, which allows us to find all the solutions and prove that all solutions have axial symmetry. LC is prone to defects, which corresponds to the mathematical singularities. The main reason is because the nature of such problem is a vector field. Mathematicians often use the LCs as the background to study singularities of harmonic maps. 
The $\QQ$-tensor theory is a regularization model for the vector model, involving the mathematical problems such as eigenvalues degradation and multiscale analysis, which provides direction and guidance to the relevant mathematical study. Furthermore, because of multiscale phenomena in complex fluids, it is desirable to study modeling and numerical methods in computational and applied mathematics.

{\it Provide new methodologies}. 
The choice of order parameter in the macroscopic model of LC and is a key to the understanding of the connections between macroscopic theories and microscopic theories. For rod-like liquid crystal,  axially symmetric assumption is generally believed by chemists, physicists, and scientists in materials or mechanics. The strict mathematical proof can not only verify their intuition, but also achieve deep understanding of the mechanism of axial symmetry.
Continuum NLC theories, which assume that macroscopic quantities of interest vary slowly on the molecular length-scale, are typically defined in terms of a macroscopic order parameter and a free energy; the experimentally observable states correspond to energy minimizers. Mathematically, this reduces to analyzing and numerically computing solutions of the associated Euler Lagrange equations which are a system of highly nonlinear and coupled partial differential equations. Recent years have seen a boom in analytical and numerical efforts to compute the solution landscapes of NLC systems.

Although tremendous progress have been achieved in both theoretical and numerical study of LCs over the last four decades, there are still many challenges in modeling and computation of LCs. According to the contents of this review, we list some of perspectives as below.

{\it Symmetry of equilibrium configurations.} Symmetry is the centre theme of LC studies. The axially symmetry of equilibrium solutions of the Onsager's molecular energy plays fundamental role in molecular kinetic theory. However, it is only proved for the Maier-Saupe potential. To prove analogous results for the Onsager potential, even for the minimizers, is an important and difficult problem.
For the Landau-de Gennes energy with isotropic elasticity, it is believed that, the minimizing solutions
corresponding to the hedgehog boundary condition in a three dimensional ball is axially symmetric.
Furthermore, all minimizers should be one of the three types: radial hedgehog, ring disclination and split core.
However, the rigorous proofs are challenging. Similar problems can be asked for the 2D point defects, which are closely related to the disclinations in three dimensional space. If the anisotropic elastic constants are not zero, corresponding problems are more complicated and difficult.
These problems are related to fine structures of the point defects and disclinations, which are of fundamental importance.

{\it Analysis on the isotropic-nematic interface.} Study on profiles and structure of two phases interface is physically important and mathematically challenging. One of the simplest model is the
 energy functional with the order parameter being a scalar function,
which has been widely studied during past two decades. When the order parameter is a high dimensional vector, the problem is much more difficult. \citeasnoun{Lin2012} studied the asymptotical behavior of  minimizers for the high dimensional problem.
The phase transition interface between isotropic and nematic phases is even harder because it is not only high dimensional but also involves anisotropic elasticity. The problem on one dimensional line has been studied in some special cases \cite{Park2017,Chen2018}. The general problem, such as properties of profiles, the boundary condition on the interface and the limit behavior of the equilibrium configuration are needed to be further studied.

{\it Wellposedness, blow-up and long time behavior of solutions to different dynamical models.}
Although there are many works on the wellposedness of solutions to various dynamical models, some basic questions are still unsolved, for examples, the global existence of weak solutions to the three dimensional Ericksen-Leslie system and the inhomogenous Doi-Onsager equation. Constructing solutions which blow up at finite time are also of interest. In addition, the long time dynamics would be complicated, even under a given flow field. For examples, there are many periodic solutions which are called kayaking, wagging, logrolling, et. al. It would be interesting to rigorously justify the existence of these solutions.

{\it Dynamics of point defects, disclinations and two phase interfaces}. In the Ginzburg-Landau theory for superconductors, dynamics of vortices and filaments have been well understood.  The method can not be directly applied for similar problems in LCs, for example, the uniaxial limit of dynamical $\QQ$-tensor system, due to some essential differences: First, the elasticity is usually anisotropic which will cause complex phenomenons such as back flows; Second, the $\QQ$-tensor can escape to biaxial, and the minimizing manifold is isomorphic to the projective space $\mathbb{RP}^2$ which has different topological property with $\mathbb{S}^2$ or $\mathbb{S}^1$; Thirdly, the hydrodynamics will play important roles.
The singularity set of the limit system can be isolated points, lines and surfaces, which has co-dimensional three, two and one respectively.
The evolution of the singularity set is an interesting and challenging problems which may rely on
deeper understanding on the profiles near the singularity set. 

{\it Relationship between different dynamical models}. The rigorous derivation of the Ericksen-Leslie theory from the Doi-Onsager  theory
are only established for smooth solutions with isotropic elasticity ($k_1=k_2=k_3$). To generalize it to the anisotropic elasticity case would be
an important problem. Moreover, as the defect solutions can only be described by weak solutions in the vector theory, justification of convergence for weak solutions is an important problem. However, analogous results for weak solutions are very few.

{\it Solution landscape of confined liquid crystals}.
NLCs in confinement typically exhibit multiple stable and unstable states, which correspond to stationary solutions of a free energy. Solution landscape of confined NLCs, a hierarchy of connected solutions, is able to provide a rich and insightful information of the physical properties of such multisolution problems. The HiSD is an efficient numerical algorithm for computing saddle-point solutions for LC systems, and has been successfully applied to construct the solution landscapes of the NLCs confined in a square and a hexagon. Solution landscape reveals not only transition pathways between stable solutions connected by index-1 saddle points, but also innovative dynamical pathways mediated by high-index saddle points. Several challenging analytic and numerical questions remain for the solution landscape, for instance, the completion of solution landscape. Can we obtain bounds for the Morse index of the parent state in the solution landscape? How to estimate the number of the stationary solutions of a given free energy? Building theoretical framework of solution landscape is a big challenge as well as a great opportunity for applied mathematicians to make contributions.

Finally, we would like to emphasize again that we only focus on the simplest, NLC systems in the content of this review, and have not touched much upon the other LC systems. There is a crucial need for exhaustive theoretical and numerical studies of complex and unconventional LC systems, to complement thriving experimental work. We list a few samplers in the remaining part, in which the theory is relatively open, the energies are more complex, and the solutions are less regular with new analytic and numerical challenges.

{\it Phase transitions in multiphase liquid crystal systems}.
Recent experiments show NLC-cholesteric (by adding chiral particles), NLC-smectic (by lowering the temperature) phase transitions in LC systems \cite{siavashpouri2017molecular}. Compared with single phase LC, these heterogeneous systems are inherently more complex with multiple competing NLC, cholesteric/smectic and mixed solutions, novel solutions with phase separation, and interactions between phases \cite{gottarelli1983induction}. Multiple phases in the heterogeneous system need to be analyzed under the unified theory framework. For instance, the complex N-S or N-C systems can be analyzed in terms of LdG-type energies with multiple order parameters for coupled system with smectic/cholesteric terms. The heterogeneous systems can also be analyzed by molecular models. On the other hand, the accurate structures (e.g. interfaces between phases) and complex defects, have multiscale properties. The theory is relatively open with new analytic and numerical challenges.

{\it Ferroelectric and ferromagnetic liquid crystals}.
LC is sensitive to external fields such as electric field and magnetic field \cite{mertelj2013,mertelj2017,Chen14021,bisht2020tailored}. 
Due to the photoelectric effect, LCs are applied in display devices and have revolutionized the display industry. Besides normal LC, ferroelectric LC as a new LC material has attracted extensive attention. Ferroelectric LC materials have the characteristics of high response speed, high contrast, high resolution and large capacity information display. It has a good application prospect in the fields of display, optical interconnections and optical information processing. One future direction is to develop suitable LC models from molecular model to macroscopic models to study the photoelectric effect of ferroelectric LCs and the design of voltage waveform.


{\it Active liquid crystal systems}.
The non-equilibrium phenomena studied in active matter systems are widespread in nature, especially playing an important role in biological phenomena \cite{marchetti2013hydrodynamics,keber2014topology,prost2015active}. The active matter system is a typical non-equilibrium system, which is composed of a large number of active particles. These particles obtain energy for self-propulsion by converting other forms of energy into kinetic energy \cite{zhou2014living}. Due to the interaction between particles and particles, and particles and media, the entire system can exhibit extremely rich dynamic phenomena macroscopically. Active LC system is a paradigm of active systems for biologically inspired complex fluids with orientational alignment. In active LC, the rod-like constituents endow the fluid they are immersed in with active stresses. For instance, in cell motivity the motility mechanism arising by the interaction of myosin and actin. Their dynamic properties, phase transition process, and external field response are worthy of study and exploration.

\section*{Acknowledgement}
We would like to thank Dr. Yucen Han, Dr. Yiwei Wang, and Mr. Jianyuan Yin for providing great assistance. We also thank Professors Yucheng Hu, Yuning Liu, Apala Majumdar, Jinhae Park, and Zhifei Zhang for fruitful discussions and helpful comments. Wei Wang was partially supported by the National Natural Science Foundation of China No. 11922118 and 11931010. Lei Zhang was partially supported by the National Natural Science Foundation of China No. 11861130351 and the Royal Society Newton Advanced Fellowship, in partnership with Apala Majumdar from Strathclyde. Pingwen Zhang was partially supported by the National Natural Science Foundation of China No. 21790340, 11421101.

\section{Appendix}
\subsection{Elementary identities for the high order moment}

Let $\QQ_2[f]=\langle\mm\mm-\frac{1}{3}\II\rangle_f,$
and define $\QQ_4[f]$ as follows:
\begin{eqnarray}\label{def:Q4}
Q_{4\alpha\beta\gamma\mu}[f]\!\!\!\!\!&=&\!\!\!\!\Big\langle{m}_{\alpha}{m}_{\beta}{m}_{\gamma}{m}_{\mu}
-\frac{1}{7}(m_{\alpha}m_{\beta}\delta_{\gamma\mu}+
m_{\gamma}m_{\mu}\delta_{\alpha\beta}+m_{\alpha}m_{\gamma}\delta_{\beta\mu}
+m_{\beta}m_{\mu}\delta_{\alpha\gamma}\nonumber\\
&&+m_{\alpha}m_{\mu}\delta_{\beta\gamma}+
m_{\beta}m_{\gamma}\delta_{\alpha\mu})+\frac{1}{35}(\delta_{\alpha\beta}\delta_{\gamma\mu}
+\delta_{\alpha\gamma}\delta_{\beta\mu}+\delta_{\alpha\mu}\delta_{\beta\gamma})\Big\rangle_f.
\end{eqnarray}
Let $P_k(x)$ be the $k$-th Legendre polynomial and
\begin{align}
  S_2=\langle{P}_2(\mm\cdot\nn)
\rangle_{h_\nn},~S_4=\langle{P}_4(\mm\cdot\nn)\rangle_{h_\nn},
\end{align}
which only depend on $\alpha$. Firstly, we have a lemma:
\begin{lemma}\label{Lem:Q-tensor}
\begin{eqnarray*}
\QQ_2[h_\nn]&=&S_2(\nn\nn-\frac{1}{3}),\\
Q_{4\alpha\beta\gamma\mu}[h_\nn]&=&S_4\Big(n_{\alpha}n_{\beta}n_{\gamma}n_{\mu}
-\frac{1}{7}(n_{\alpha}n_{\beta}\delta_{\gamma\mu}+
n_{\gamma}n_{\mu}\delta_{\alpha\beta}+n_{\alpha}n_{\gamma}\delta_{\beta\mu}\nonumber\\
&&+n_{\beta}n_{\mu}\delta_{\alpha\gamma}+n_{\alpha}n_{\mu}\delta_{\beta\gamma}+
n_{\beta}n_{\gamma}\delta_{\alpha\mu})+\frac{1}{35}(\delta_{\alpha\beta}
\delta_{\gamma\mu}+\delta_{\alpha\gamma}\delta_{\beta\mu}
+\delta_{\alpha\mu}\delta_{\beta\gamma})\Big).~~~~~~~~
\end{eqnarray*}
\end{lemma}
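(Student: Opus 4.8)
The plan is to exploit the fact that $h_\nn$ depends on $\mm$ only through $\mm\cdot\nn$, so that it is invariant under the stabilizer $G_\nn=\{R\in O(3):R\nn=\nn\}$, and then to pin down the resulting invariant tensors by a single scalar contraction in each case. First I would record that, by the change of variables $\mm\mapsto R\mm$ (which preserves both $\ud\mm$ and $h_\nn$ when $R\in G_\nn$), the tensors $\QQ_2[h_\nn]=\langle\mm\mm-\frac13\II\rangle_{h_\nn}$ and $\QQ_4[h_\nn]$ are $G_\nn$-invariant. Moreover $\QQ_2[h_\nn]$ is symmetric and trace-free by definition, and $\QQ_4[h_\nn]$ is symmetric and \emph{fully} trace-free: contracting the bracket in \eqref{def:Q4} over any pair of indices and using $|\mm|^2=1$ yields $0$ (the coefficients $\tfrac17$ and $\tfrac1{35}$ are exactly the ``detracer'' weights on $\mathbb{R}^3$). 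The same computation shows that the rank-$4$ tensor $\mathcal T(\nn)$ obtained by replacing $\mm$ by $\nn$ in \eqref{def:Q4} (and dropping the average) is itself symmetric and trace-free.

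The next step is a dimension count for $G_\nn$-invariant symmetric tensors. The symmetric $G_\nn$-invariant rank-$2$ tensors are spanned by $\II$ and $\nn\nn$, so the trace-free ones form the line $\mathbb{R}(\nn\nn-\frac13\II)$; hence $\QQ_2[h_\nn]=c_2(\nn\nn-\frac13\II)$ for a scalar $c_2$. The symmetric $G_\nn$-invariant rank-$4$ tensors are spanned by $\nn^{\otimes4}$, $\mathrm{sym}(\nn\nn\otimes\II)$ and $\mathrm{sym}(\II\otimes\II)$, a $3$-dimensional space; ``take the trace'' maps it onto the $2$-dimensional space of symmetric $G_\nn$-invariant rank-$2$ tensors (surjectivity is immediate since the traces of $\nn^{\otimes4}$ and $\mathrm{sym}(\II\otimes\II)$ are independent), so the trace-free invariants form a line, necessarily spanned by $\mathcal T(\nn)$. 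Thus $\QQ_4[h_\nn]=c_4\,\mathcal T(\nn)$ for a scalar $c_4$, and it only remains to identify $c_2$ and $c_4$.

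To fix the constants I would contract with $\nn$. For the rank-$2$ case, $\nn\cdot\QQ_2[h_\nn]\cdot\nn=\langle(\mm\cdot\nn)^2\rangle_{h_\nn}-\tfrac13$, while $\nn\cdot c_2(\nn\nn-\tfrac13\II)\cdot\nn=\tfrac23c_2$; since $P_2(x)=\tfrac{3x^2-1}2$ gives $\langle(\mm\cdot\nn)^2\rangle_{h_\nn}=\tfrac{2S_2+1}3$, we get $c_2=S_2$. For the rank-$4$ case, contracting $\QQ_4[h_\nn]$ fully against $\nn^{\otimes4}$ gives $\big\langle(\mm\cdot\nn)^4-\tfrac67(\mm\cdot\nn)^2+\tfrac3{35}\big\rangle_{h_\nn}=\tfrac8{35}S_4$, using $\tfrac8{35}P_4(x)=x^4-\tfrac67x^2+\tfrac3{35}$, while the same contraction of $c_4\mathcal T(\nn)$ is $c_4(1-\tfrac67+\tfrac3{35})=\tfrac8{35}c_4$, so $c_4=S_4$. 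An alternative, more computational route avoids the invariant-theory bookkeeping: write $\mm=\cos\theta\,\nn+\sin\theta\,\ee(\phi)$ with $\ee(\phi)$ tracing the unit circle in $\nn^\perp$, integrate over $\phi$ first using $\langle\ee\ee\rangle_\phi=\tfrac12(\II-\nn\nn)$, $\langle\ee^{\otimes4}\rangle_\phi=\tfrac18\,\mathrm{sym}\big((\II-\nn\nn)\otimes(\II-\nn\nn)\big)$ and the vanishing of odd $\phi$-moments, and then substitute $\langle\cos^2\theta\rangle_{h_\nn}$, $\langle\cos^4\theta\rangle_{h_\nn}$ expressed through $S_2,S_4$. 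I expect the main obstacle to be the rank-$4$ bookkeeping — either the careful verification that the trace-free $G_\nn$-invariant rank-$4$ space is one-dimensional and spanned by $\mathcal T(\nn)$, or, in the direct approach, organizing the symmetrizations in the $\sin^4\theta$ and $\cos^2\theta\sin^2\theta$ contributions; the remaining identities involving $P_2$ and $P_4$ are routine.
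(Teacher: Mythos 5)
The paper states Lemma \ref{Lem:Q-tensor} without proof (it is treated as a standard moment computation), so there is no in-paper argument to compare against; judged on its own, your proof is correct and complete. The three pillars all check out: (i) $h_\nn$ is invariant under the stabilizer $G_\nn=\{R\in O(3):R\nn=\nn\}$, so both moments are $G_\nn$-invariant tensors; (ii) the bracket in \eqref{def:Q4} is fully symmetric and fully trace-free — contracting any index pair and using $|\mm|^2=1$ gives $m_\alpha m_\beta-\frac{1}{7}(7m_\alpha m_\beta+\delta_{\alpha\beta})+\frac{5}{35}\delta_{\alpha\beta}=0$, confirming that $\frac17$ and $\frac1{35}$ are the detracer weights; (iii) the dimension counts are right: with $\nn=\mathbf{e}_3$, symmetric $G_\nn$-invariant rank-4 tensors correspond to the invariant quartics $x_3^4$, $x_3^2(x_1^2+x_2^2)$, $(x_1^2+x_2^2)^2$, a $3$-dimensional space, and the trace map sends it onto the $2$-dimensional span of $\nn\nn$ and $\II$, so the trace-free invariants form a line containing the candidate tensor. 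Your normalizations are also correct: $P_2(x)=\frac{3x^2-1}{2}$ gives $c_2=S_2$, and contracting four times with $\nn$ gives $\big\langle(\mm\cdot\nn)^4-\frac67(\mm\cdot\nn)^2+\frac3{35}\big\rangle_{h_\nn}=\frac{8}{35}S_4$ versus $\frac{8}{35}c_4$ for the candidate, using $\frac{8}{35}P_4(x)=x^4-\frac67x^2+\frac3{35}$, so $c_4=S_4$. The route the paper presumably has in mind (and which you sketch as an alternative) is the direct computation: write $\mm=\cos\theta\,\nn+\sin\theta\,\mathbf{e}(\phi)$, average out $\phi$, and convert $\langle\cos^2\theta\rangle$, $\langle\cos^4\theta\rangle$ into $S_2$, $S_4$; that is more mechanical, while your invariant-theoretic argument replaces the rank-4 bookkeeping with a one-line scalar contraction, so either is acceptable.
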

\begin{corollary}\label{Cor:M-tensor}
Let $\MM=\langle\mm\mm\rangle_{h_\nn},~\MM^{(4)}=\langle\mm\mm\mm\mm\rangle_{h_\nn}$, there holds that
\begin{eqnarray}\label{M2}
\MM&=&S_2\nn\nn+\frac{1-S_2}{3}\II,\\
M^{(4)}_{\alpha\beta\gamma\mu}&=&S_4n_{\alpha}n_{\beta}n_{\gamma}n_{\mu}
+\frac{S_2-S_4}{7}(n_{\alpha}n_{\beta}\delta_{\gamma\mu}+
n_{\gamma}n_{\mu}\delta_{\alpha\beta}+n_{\alpha}n_{\gamma}\delta_{\beta\mu}
+n_{\beta}n_{\mu}\delta_{\alpha\gamma}\nonumber\\&&
+n_{\alpha}n_{\mu}\delta_{\beta\gamma}+
n_{\beta}n_{\gamma}\delta_{\alpha\mu})+\big(\frac{S_4}{35}-\frac{2S_2}{21}
+\frac{1}{15}\big)(\delta_{\alpha\beta}
\delta_{\gamma\mu}+\delta_{\alpha\gamma}\delta_{\beta\mu}
+\delta_{\alpha\mu}\delta_{\beta\gamma}).~~~~~~~~\label{M4}
\end{eqnarray}
\end{corollary}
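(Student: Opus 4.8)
The plan is to obtain Corollary~\ref{Cor:M-tensor} as a purely algebraic consequence of Lemma~\ref{Lem:Q-tensor}, by inverting the definitions of $\QQ_2[f]$ and $\QQ_4[f]$ so as to express the raw moments $\MM=\langle\mm\mm\rangle_{h_\nn}$ and $\MM^{(4)}=\langle\mm\mm\mm\mm\rangle_{h_\nn}$ in terms of $\QQ_2[h_\nn]$, $\QQ_4[h_\nn]$ and the normalization $\langle1\rangle_{h_\nn}=1$ (valid since $h_\nn$ is a probability density on $\BS$). No limiting or estimation argument is involved; it is a matter of contracting symmetrized tensors and collecting coefficients.

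For the second moment, the definition gives $\QQ_2[f]=\langle\mm\mm\rangle_f-\frac13\II$, so $\MM=\QQ_2[h_\nn]+\frac13\II$. Substituting $\QQ_2[h_\nn]=S_2(\nn\nn-\frac13\II)$ from Lemma~\ref{Lem:Q-tensor} and grouping terms yields $\MM=S_2\nn\nn+\frac{1-S_2}{3}\II$, which is \eqref{M2}.

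For the fourth moment, reading \eqref{def:Q4} and pulling the average through the Kronecker deltas (using $\langle m_a m_b\rangle_{h_\nn}=M_{ab}$ and $\langle1\rangle_{h_\nn}=1$) gives the identity
\begin{align*}
M^{(4)}_{\alpha\beta\gamma\mu}=Q_{4\alpha\beta\gamma\mu}[h_\nn]
&+\tfrac17\big(M_{\alpha\beta}\delta_{\gamma\mu}+M_{\gamma\mu}\delta_{\alpha\beta}+M_{\alpha\gamma}\delta_{\beta\mu}
+M_{\beta\mu}\delta_{\alpha\gamma}+M_{\alpha\mu}\delta_{\beta\gamma}+M_{\beta\gamma}\delta_{\alpha\mu}\big)\\
&-\tfrac{1}{35}\big(\delta_{\alpha\beta}\delta_{\gamma\mu}+\delta_{\alpha\gamma}\delta_{\beta\mu}+\delta_{\alpha\mu}\delta_{\beta\gamma}\big).
\end{align*}
Into this I would insert the expression for $\MM$ just derived together with $\QQ_4[h_\nn]$ from Lemma~\ref{Lem:Q-tensor}, and then sort the result according to its three tensorial types. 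The coefficient of $n_\alpha n_\beta n_\gamma n_\mu$ comes only from $\QQ_4[h_\nn]$ and equals $S_4$; the coefficient of the six terms $n_a n_b\delta_{cd}$ is $-\frac{S_4}{7}$ (from $\QQ_4[h_\nn]$) plus $\frac{S_2}{7}$ (from the anisotropic part of the $\MM$-contractions), i.e.\ $\frac{S_2-S_4}{7}$; and the coefficient of the three terms $\delta_{ab}\delta_{cd}$ is $\frac{S_4}{35}-\frac{1}{35}$ plus $\frac{2(1-S_2)}{21}$ coming from the isotropic part $\frac{1-S_2}{3}\delta_{ab}$ of the six $\MM$-contractions. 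This gives \eqref{M4} once one checks $-\frac1{35}+\frac{2}{21}=\frac1{15}$.

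The step most prone to error — and essentially the only nontrivial point — is the index bookkeeping in that last computation: one must verify that contracting the six symmetrized $M_{ab}\delta_{cd}$ terms produces each of the three distinct monomials $\delta_{ab}\delta_{cd}$ with multiplicity exactly two, so that the isotropic contribution is $\frac17\cdot\frac{1-S_2}{3}\cdot2=\frac{2(1-S_2)}{21}$ rather than $\frac{1-S_2}{21}$. Once this multiplicity is settled, the three coefficients collapse to those stated in \eqref{M2}--\eqref{M4}, completing the proof of Corollary~\ref{Cor:M-tensor}.
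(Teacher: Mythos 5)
Your proposal is correct and follows exactly the route the paper intends: the corollary is an immediate algebraic consequence of Lemma \ref{Lem:Q-tensor}, obtained by inverting the definitions of $\QQ_2$ and \eqref{def:Q4} and collecting coefficients, and your bookkeeping (including the multiplicity two of each $\delta\delta$ product and the check $-\tfrac1{35}+\tfrac{2}{21}=\tfrac1{15}$) is accurate.
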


\subsection{Some identities used in Section \ref{sec:deriv-momentum}.}
For any constant vector $\uu,~\ww\in\mathbb{R}^3$, and a vector field $\vv(\mm)$ defined on $\BS$,
\begin{eqnarray*}
&&\uu\cdot\Big(\int_{\BS}(\mm\mm-\frac{1}{3}\II)\CR\cdot(f\vv)\ud\mm\Big)\cdot\ww\\
&=&\int_{\BS}\Big((\mm\cdot\uu)(\mm\cdot\ww)-\frac{1}{3}\uu\cdot\ww\Big)\CR\cdot(f\vv)\ud\mm\\
&=&-\int_{\BS}\CR\Big((\mm\cdot\uu)(\mm\cdot\ww)\Big)\cdot(f\vv)\ud\mm\\
&=&-\int_{\BS}\mm\times\uu\cdot\vv(\mm\cdot\ww)f+(\mm\cdot\uu)(\mm\times\ww)\cdot\vv{f}\ud\mm\\
&=&\uu\cdot\langle\mm\times\vv\mm+\mm\mm\times\vv\rangle\cdot\ww.
\end{eqnarray*}
Hence, we have:
\begin{eqnarray}\label{eq:stress1-sym}
\int_{\BS}(\mm\mm-\frac{1}{3}\II)\CR\cdot(f\vv)\ud\mm=\langle\mm\times\vv\mm+\mm\mm\times\vv\rangle.
\end{eqnarray}
Let $\vv=\CR{\mu}$, we have
\begin{eqnarray}
\int_{\BS}(\mm\mm-\frac{1}{3}\II)\CR\cdot(f\CR\mu)\ud\mm=\langle\mm\times\CR\mu\mm+\mm\mm\times\CR\mu\rangle.
\end{eqnarray}
Let $\vv=\mm\times\kappa\cdot\mm$, we have
\begin{eqnarray}
&&\int_{\BS}(\mm\mm-\frac{1}{3}\II)\CR\cdot(f\mm\times\kappa\cdot\mm)\ud\mm\nonumber\\
&=&\langle\mm\times(\mm\times\kappa\cdot\mm)\mm+\mm\mm\times(\mm\times\kappa\cdot\mm)\rangle\nonumber\\
&=&\langle2(\mm\cdot\kappa\cdot\mm)\mm\mm-(\kappa\cdot\mm)\mm-\mm(\kappa\cdot\mm)\rangle\nonumber\\
&=&2\DD:\langle\mm\mm\mm\mm\rangle-\DD\cdot\langle\mm\mm\rangle
+\BOm\cdot\langle\mm\mm\rangle-\langle\mm\mm\rangle\cdot(\DD+\BOm).\label{identity-3}
\end{eqnarray}

%

\begin{Lemma}\label{Lem:antisymmetric}
For any antisymmetric constant matrix $\BOm$, we have
\begin{align*}
\CR\cdot\big(\mm\times(\BOm\cdot\mm){f}_0\big)&=(\nn\times(\BOm\cdot\nn))\cdot\CR{f_0},\\
\CR\cdot\big(\mm\times(\DD\cdot\mm){f}_0\big)&=\frac12\CR\cdot\big(f_0\CR (\mm\mm:\DD)\big).
\end{align*}
\end{Lemma}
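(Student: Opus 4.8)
The plan is to reduce both identities to elementary vector calculus on $\BS$, resting on two preliminary observations about the rotational gradient $\CR=\mm\times\nabla_{\BS}$. First, $\CR g=\mm\times\nabla g$ for \emph{any} smooth extension of $g$ to a neighbourhood in $\mathbb{R}^3$, since the normal component of $\nabla g$ is parallel to $\mm$ and is annihilated by the cross product with $\mm$. Second, $\CR\cdot(h\mm)=0$ for every scalar function $h$; that is, a purely radial vector field has vanishing rotational divergence, which follows from a one-line index computation using the antisymmetry of the Levi--Civita symbol. I would also record the trivial identity $\CR\cdot(\vom\,g)=\vom\cdot\CR g$ for a \emph{constant} vector $\vom$. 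These three facts would be stated at the outset.

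For the first identity, I would introduce the axial vector $\vom$ of the antisymmetric matrix $\BOm$, so that $\BOm\cdot\mm=\vom\times\mm$. The vector triple product identity then gives $\mm\times(\BOm\cdot\mm)=\vom-(\vom\cdot\mm)\mm$ and $\nn\times(\BOm\cdot\nn)=\vom-(\vom\cdot\nn)\nn$ on $\BS$. Using the second observation, the radial term $(\vom\cdot\mm)\mm\,f_0$ contributes nothing to the rotational divergence, so together with $\CR\cdot(\vom\,f_0)=\vom\cdot\CR f_0$ the left-hand side collapses to $\vom\cdot\CR f_0$. On the right-hand side, $\CR f_0=\mm\times\nabla f_0$ is orthogonal to $\mm$, and since $f_0=h_{\eta,\nn}$ depends on $\mm$ only through $\mm\cdot\nn$ — whence $\nabla f_0$ is parallel to $\nn$ — it is also orthogonal to $\nn$; therefore the $(\vom\cdot\nn)\nn$ term in $\nn\times(\BOm\cdot\nn)$ pairs to zero against $\CR f_0$, and the right-hand side likewise reduces to $\vom\cdot\CR f_0$. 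This closes the argument.

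The second identity is shorter still. I would compute the ambient gradient $\nabla(\mm\mm:\DD)=2\,\DD\cdot\mm$, where the symmetry $\DD=\DD^{T}$ is used to combine the two arising terms. Hence $\CR(\mm\mm:\DD)=\mm\times\nabla(\mm\mm:\DD)=2\,\mm\times(\DD\cdot\mm)$, so $\frac{1}{2}f_0\,\CR(\mm\mm:\DD)=f_0\,\mm\times(\DD\cdot\mm)$, and applying $\CR\cdot$ to both sides produces the claim at once. Note that this second identity does not use the particular form of $f_0$.

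I do not anticipate a genuine obstacle. The only points that need care are the bookkeeping distinction between the surface gradient $\nabla_{\BS}$ and an ambient gradient, the verification that $\CR\cdot$ annihilates radial fields, and keeping the scalar triple products straight; once these are in place, both identities follow in a couple of lines from the vector triple product identity and the explicit structure $f_0=f_0(\mm\cdot\nn)$.
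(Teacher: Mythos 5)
Your proof is correct and follows essentially the same route as the paper's: for the first identity the paper likewise reduces to the two facts that the $f_0$-free field contributes nothing (it checks $\CR\cdot\big(\mm\times(\BOm\cdot\mm)\big)=0$ by an index computation, where you split off the radial part via the axial vector of $\BOm$) and that $\CR f_0=f_0'\,(\mm\times\nn)$ is orthogonal to both $\mm$ and $\nn$, and the second identity is in both cases the one-line computation $\CR(\mm\mm:\DD)=2\,\mm\times(\DD\cdot\mm)$. The axial-vector bookkeeping is a cosmetic variant, not a different argument.
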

\no{\bf Proof.}\,
The lemma is a direct consequence of the following identities
\begin{align*}
&\CR\cdot\big(\mm\times(\BOm\cdot\mm)\big)=\CR_i(\epsilon^{ijk}m_j\Omega_{kl}m_l)=(\II-3\mm\mm):\BOm=0,\\
&(\mm\times(\BOm\cdot\mm))\cdot\CR{f_0}=(\mm\times(\BOm\cdot\mm))\cdot(\mm\times\nn)f_0'\\
&\quad=(\nn\times(\BOm\cdot\nn))\cdot(\mm\times\nn)f_0'=(\nn\times(\BOm\cdot\nn))\cdot\CR{f_0}.
\end{align*}
This completes the proof. \ef

\bibliographystyle{actaagsm}
\bibliography{ActaNumerics0929}
\label{lastpage}
\end{document}